\documentclass[11pt]{amsart}

\usepackage{amsfonts,amsthm,amsmath,enumerate,amssymb,latexsym,color,tikz,mathrsfs,bm,bbm}
\usepackage[text={6in,9in},centering]{geometry}
\usepackage[backref=page,
            colorlinks,
            linkcolor=blue,
            anchorcolor=red,
            citecolor=red]{hyperref}
\usetikzlibrary{intersections}
\usepackage{tikz-cd}
\usepackage{multicol}

\theoremstyle{plain}
\newtheorem{theorem}{Theorem}[section]
\newtheorem{lemma}[theorem]{Lemma}
\newtheorem{corollary}[theorem]{Corollary}
\newtheorem{proposition}[theorem]{Proposition}

\newtheorem{question}[theorem]{Question}

\theoremstyle{definition}
\newtheorem{definition}[theorem]{Definition}
\newtheorem{example}[theorem]{Example}

\theoremstyle{remark}
\newtheorem{remark}[theorem]{Remark}

\numberwithin{equation}{section}

\DeclareMathOperator{\dimh}{dim_H}
\DeclareMathOperator{\dist}{dist}
\DeclareMathOperator{\diam}{diam}
\DeclareMathOperator{\dims}{dim_S}
\DeclareMathOperator{\dimb}{dim_B}
\DeclareMathOperator{\lip}{Lip}
\DeclareMathOperator{\dimub}{\overline{\dim}_B}

\newcommand{\rd}{\,\mathrm{d}}  

\def\R{\mathbb R}

\def\i{\bm i}
\def\j{\bm j}

\def\t{\bm t}
\def\bi{\mathbf i}
\def\bj{\mathbf j}

\def\h{\mathcal H}

\begin{document}

\title[Coding Multiplicity in Self-Similar Sets on the Line]{Coding Multiplicity in Self-Similar Sets on the Line}

\author{K\'aroly Simon}
\address{Department of Stochastics, Institute of Mathematics, Budapest University of Technology and Economics, Budapest, Hungary \& HUN-REN-BME Stochastics Research Group, Budapest, Hungary}
\email{karoly.simon51@gmail.com}

\author{Jian-Ci Xiao}
\address{School of Mathematics, Nanjing University of Aeronautics and Astronautics, Nanjing, China}
\email{jcxiao@nuaa.edu.cn}

\subjclass[2020]{Primary 28A80; Secondary 37A05}
\keywords{Self-similar sets, coding multiplicity, critical set, finite intersection property, uniform bounded coding condition, transversality.}

\begin{abstract}
    We study coding multiplicity for self-similar sets on the real line. When the similarity dimension exceeds one, almost every point with respect to the natural self-similar measure has infinitely many codings. We focus on the case when the similarity dimension is at most one. Our analysis proceeds  through the recurrence properties and the size of the critical set, defined as the union of the pairwise intersections of the first-level pieces.

    Our main structural result shows that when the critical set is finite, unboundedness of coding multiplicity is equivalent to periodic recurrence to the critical set. Moreover, every commensurable system with a finite critical set has uniformly bounded coding multiplicity. This conclusion is sharp in the incommensurable setting. We construct a one-parameter family of four-map systems such that for Lebesgue almost every parameter, the critical set is a singleton and the open set condition fails, while the attractor contains a point with infinitely many codings. More generally, for transversal families, the set of parameters admitting an infinite coding fibre is dense among those for which the strong separation condition fails. We also obtain a quantitative lower bound for the Hausdorff dimension of this parameter set. Such parameters can form a meagre set.

    We continue the study of the geometric size of branching, obtaining dimension formulae for typical coding fibres and almost-sure upper bounds on the dimension of the critical set in transversal families. In contrast, we construct an exponentially separated system with no dimension drop whose critical set has the same Hausdorff dimension as the attractor and whose coding map has a fibre of continuum cardinality. Although we work in the one-dimensional setting, several results and arguments extend to higher dimensions.
\end{abstract}

\maketitle

\section{Introduction}
\label{sec:introduction}
\subsection{Background and motivation}

Let $\Phi=\{\varphi_i\}_{i\in\Lambda}$ be an iterated function system (IFS) of contracting similitudes on $\R$, where $\Lambda$ is a finite alphabet with $\#\Lambda\geq 2$. The attractor of $\Phi$ is the unique non-empty compact set $K$ satisfying $K=\bigcup_{i\in\Lambda} \varphi_i(K)$; see Hutchinson~\cite{Hut81}. We assume throughout that $K$ is not a singleton.  Let $I$ be the convex hull of $K$. Then $\varphi_i(I)\subset I$ for all $i\in\Lambda$, and $K$ can be obtained by the iterative construction 
\[
    K = \bigcap_{n=1}^\infty \bigcup_{i_1\cdots i_n\in\Lambda^n} \varphi_{i_1}\circ\cdots\circ\varphi_{i_n}(I).
\]
We call each $\varphi_{i_1}\circ\cdots\circ\varphi_{i_n}(K)$ a \emph{cylinder} of $K$ and $\varphi_{i_1}\circ\cdots\circ\varphi_{i_n}(I)$ the corresponding \emph{cylinder interval}.

On the symbolic side, the full shift $\Sigma:=\Lambda^{\mathbb{N}}$ equipped with the left shift $\sigma$ provides a natural combinatorial model. The \emph{coding map} $\Pi \colon \Sigma \to K$ is defined by
\[
    \Pi(i_1i_2\cdots) := \lim_{n\to\infty} \varphi_{i_1}\circ\cdots\circ\varphi_{i_n}(0).
\]
For a point $x\in K$, the fibre $\Pi^{-1}(x)$ is the set of symbolic codings (or addresses) of $x$, and its cardinality records the \emph{coding multiplicity} of $x$. We are interested in the possible cardinalities of these fibres and in how overlaps generate large coding multiplicity.

At one extreme, if $\Phi$ satisfies the \emph{strong separation condition} (SSC), the first-level pieces $\varphi_i(K)$ are pairwise disjoint and every point in $K$ has a unique coding. The weaker \emph{open set condition} (OSC) allows intersections between the pieces but still imposes a uniform bound on coding multiplicity. More precisely, there is a constant $M<\infty$ such that every $x\in K$ belongs to at most $M$ cylinder intervals of comparable size; see Schief~\cite{Sch94} and Bandt--Graf~\cite{BG92}. We isolate the corresponding bound for cylinders by saying that $\Phi$ satisfies the \emph{uniformly bounded coding condition} (UBC) if there exists $M<\infty$ such that 
\[
    \#\{i_1\cdots i_n\in\Lambda^n: x\in \varphi_{i_1}\circ\cdots\circ\varphi_{i_n}(K)\} \leq M, \quad \forall x\in K, \forall n\geq 1.
\]
Equivalently, $\Phi$ satisfies the UBC if and only if $\sup_{x\in K}\#\Pi^{-1}(x) < \infty$. Thus the OSC implies the UBC.

To describe where codings branch, consider the \emph{critical set}
\[
    \mathcal{C} := \bigcup_{i\neq j} \varphi_i(K)\cap\varphi_j(K).
\] 
A classical question asks how small this first level overlap can be when the OSC fails. For connected self-similar sets in the plane, Bandt and Rao~\cite{BR07} proved that the finiteness of $\mathcal{C}$ already implies the OSC. Without connectedness, the situation is quite different. Kamalutdinov and Tetenov~\cite{KT20} constructed a totally disconnected self-similar set on $\R$ for which $\mathcal{C}$ consists of a single point yet the OSC fails. Moreover, every point of the attractor has at most two codings and hence the UBC survives. Kadirova~\cite{Kad26} recently obtained an orientation-preserving analogue of this construction. 

The critical set determines the set of points with more than one coding. Indeed, if $\Lambda^*$ denotes the set of finite words including the empty word, then 
\begin{equation}\label{eq:branchingorbit}
	\{x\in K: \#\Pi^{-1}(x) \geq 2\} = \mathcal{C}\cup\bigcup_{n\geq 1}\bigcup_{i_1,\cdots,i_n\in\Lambda} \varphi_{i_1}\circ\cdots\circ\varphi_{i_n}(\mathcal{C}).
\end{equation}
To see this, take two distinct codings of the same point. Removing their longest common prefix produces two codings of a point in $\mathcal{C}$ with different initial symbols. Conversely, every image of a point in $\mathcal{C}$ has at least two codings. Formula~\eqref{eq:branchingorbit} reveals two ways in which the critical set influences coding multiplicity. The \emph{size} of $\mathcal{C}$ controls the global extent of non-uniqueness. For instance, 
if $\mathcal{C}$ is finite, the set of points with more than one coding is countable. The \emph{recurrence} of the shift orbit to $\Pi^{-1}(\mathcal{C})$, on the other hand, controls how non-uniqueness accumulates along an individual fibre. 

Our results address these two aspects of overlap. We first characterise unbounded coding multiplicity when the critical set is finite. We then study the dimensions of coding fibres and the dimension and measure of the critical set. Finally, we investigate these questions in transversal families, estimating both the parameters admitting infinite fibres and the critical sets of typical systems. Although we focus on the one-dimensional setting for clarity of exposition, the symbolic, recurrence, and measure-theoretic arguments developed below are formulated independently of dimension.

\begin{remark}
    The distinction between the size and recurrence of $\mathcal{C}$ has already appeared in the theory of $\beta$-expansions. For $1<\beta<2$, the IFS $\{\frac{x}{\beta},\frac{x+1}{\beta}\}$ identifies codings with expansions $x=\sum_{n=1}^\infty\varepsilon_n\beta^{-n}$, where $\varepsilon_n\in\{0,1\}$. Sidorov~\cite{Sid03} proved that Lebesgue-almost every point in the attractor interval has continuum many $\beta$-expansions. On the other hand, the existence and structure of points having a unique expansion, finitely many expansions, or countably many expansions depend on the parameter $\beta$; see, among others,~\cite{GS01,BS14,Bak15}. This illustrates that a large first level overlap may make non-uniqueness abundant, while the precise multiplicity of exceptional fibres is determined by the much finer recurrence and branching structure. Similar phenomena occur in more general overlapping IFSs~\cite{DJKL19,DJKLX21}.
\end{remark}


\subsection{Finite critical sets and recurrent branching}

We say that $\Phi$ satisfies the \emph{finite intersection property} (FIP) if $\mathcal{C}$ is finite. The FIP differs from the classical notion of post-critical finiteness. 
In Kigami's terminology~\cite{Kig01}, a self-similar structure is \emph{post-critically finite} (p.c.f.) when $\bigcup_{n\geq1}\sigma^n(\Pi^{-1}(\mathcal{C}))$ is finite. 
Postcritical finiteness implies the FIP, but the converse fails. Singh and Tetenov~\cite{ST18}, for example, constructed a planar self-similar dendrite satisfying the one-point intersection property whose postcritical set is countably infinite.

Deng and Lau~\cite{DL08} proved that a p.c.f.\ self-similar IFS with commensurable similarity matrices satisfies the OSC. We ask what remains true when the p.c.f. hypothesis is weakened to the FIP. As shown in Lemma~\ref{lem:syminfiniteslice}, a point has infinitely many codings precisely when the shift orbit of one of its codings meets $\Pi^{-1}(\mathcal{C})$ infinitely often. The simplest instance is periodic recurrence. If for some non-empty finite word $\i$, the fixed point of $\varphi_{\i}$ belongs to $\mathcal{C}$, then the periodic coding $\i^\infty$ returns to $\Pi^{-1}(\mathcal{C})$ infinitely often, and that fixed point admits infinitely many codings. 

Under the FIP, such periodic recurrence is essentially the only mechanism by which coding multiplicity can become unbounded. We call $\Phi$ \emph{commensurable} if $\{r_i:i\in\Lambda\}\subset r^{\mathbb Q}$ for some $0<r<1$, where $r_i\in(0,1)$ denotes the contraction ratio of $\varphi_i$.

\begin{theorem}\label{thm:fipimplybd}
    Assume that $\Phi$ satisfies the FIP. Then the following are equivalent:
    \begin{enumerate}
        \item The IFS $\Phi$ fails the UBC;
        \item there is a point $x\in K$ with infinitely many codings;
        \item there is a non-empty finite word $\i$ such that the fixed point of $\varphi_{\i}$ belongs to $\mathcal{C}$.
    \end{enumerate}
    Moreover, if $\Phi$ is commensurable then condition (2) cannot occur; so $\Phi$ satisfies the UBC.
\end{theorem}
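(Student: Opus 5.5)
The plan is to prove the cycle of implications $(2)\Rightarrow(1)\Rightarrow(2)\Rightarrow(3)\Rightarrow(2)$ using only the symbolic structure of the fibres, and then handle the commensurable statement separately.

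\textbf{(1)$\Leftrightarrow$(2).} The implication $(2)\Rightarrow(1)$ is immediate from the equivalent formulation of the UBC as $\sup_{x\in K}\#\Pi^{-1}(x)<\infty$. For $(1)\Rightarrow(2)$ I argue by contraposition: assume every fibre is finite and set $m:=\max_{c\in\mathcal C}\#\Pi^{-1}(c)$, which is finite because $\mathcal C$ is finite. Take $x\in K$ with at least two codings. Let $w$ be the longest common prefix of all codings of $x$; it is finite because there are two distinct codings. Then:
\begin{enumerate}
\item every coding of $x$ has the form $w\omega$;
\item the map $w\omega\mapsto\omega$ is a bijection from $\Pi^{-1}(x)$ onto $\Pi^{-1}(y)$, where $y:=\varphi_w^{-1}(x)$;
\item by maximality of $w$, two of these tails $\omega$ start with different letters, so $y\in\mathcal C$.
\end{enumerate}
Hence $\#\Pi^{-1}(x)=\#\Pi^{-1}(y)\le m$ for every $x$, so the UBC holds.

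\textbf{(2)$\Rightarrow$(3) and (3)$\Rightarrow$(2).} For $(2)\Rightarrow(3)$, Lemma~\ref{lem:syminfiniteslice} gives a coding $\omega$ of $x$ with $\sigma^n\omega\in\Pi^{-1}(\mathcal C)$ for infinitely many $n$. Since $\mathcal C$ is finite, the pigeonhole principle gives $n<m$ with $\Pi(\sigma^n\omega)=\Pi(\sigma^m\omega)=:c\in\mathcal C$. Writing $\i:=\omega_{n+1}\cdots\omega_m$, which is non-empty, we have $c=\varphi_{\i}(\Pi(\sigma^m\omega))=\varphi_{\i}(c)$, so $c$ is the fixed point of $\varphi_{\i}$. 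For $(3)\Rightarrow(2)$, let $p\in\mathcal C$ be the fixed point of $\varphi_{\i}$. Then $p$ has the coding $\i^\infty$. Since $p\in\mathcal C$, it also has two codings with different first letters, so at least one of them, call it $\j$, differs from $\i^\infty$. The codings $\i^k\j$, $k\ge0$, all code $p$, and they are pairwise distinct because each first deviates from $\i^\infty$ at a different position.

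\textbf{Commensurable case.} Here I must show that (3) is impossible, so suppose $c\in\mathcal C$ is fixed by $g:=\varphi_{\i}$. Replacing $\i$ by $\i\i$, I may assume $g$ preserves orientation, with ratio $\rho=r_{\i}\in r^{\mathbb Q}$. Since $c\in\mathcal C$, there is a letter $b\ne\i_1$ and a point $z$ with $c=\varphi_b(z)$. The goal is to show that $\mathcal C$ is infinite, contradicting the FIP. For every $k\ge1$ we have
\[
\varphi_{\i^k b}(K)\subset\varphi_{\i_1}(K)\quad\text{and}\quad \varphi_{\i^k b}(K)\ni c,
\]
so near $c$ the first-level pieces $\varphi_b(K)$ and $\varphi_{\i_1}(K)$ contain the nested family $g^k(\varphi_b(K))$ of copies of the same local germ at $c$. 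I would use commensurability as follows. All cylinder ratios lie in the discrete group $r^{\mathbb Q}$ restricted to a lattice $r^{\frac1q\mathbb Z}$, so there is a cylinder $\varphi_{u}(K)\ni c$ along a coding of $c$ starting with $b$ whose ratio is exactly $\rho^N r_{v}$ for some prefix $v$ of $\i^\infty$. Two cylinders of equal size through the same point $c$ then differ by an isometry fixing $c$. Iterating with $g$, I expect to produce infinitely many distinct points $g^k(\varphi_{u}(c'))$, with $c'$ in the relevant overlap, lying in $\varphi_b(K)\cap\varphi_{\i_1}(K)$, hence infinitely many points of $\mathcal C$.

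This last step is the main obstacle. The difficulty is to ensure that the overlap between the two matched cylinders contains a point other than $c$ itself. I would first try to extract this from the infinite coding tree of $c$: its branch points are all images of points of $\mathcal C$, and by the first part of the proof each of them gives rise to a point of $\mathcal C$ with infinitely many codings. Combined with the matching of cylinder sizes, this should force infinitely many distinct critical points.
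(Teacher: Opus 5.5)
Your proof of the equivalence is correct. The implications (1)$\Leftrightarrow$(2) and (3)$\Rightarrow$(2) are the paper's arguments (Lemma~\ref{lem:Eisenough}(1) and Example~\ref{exa:periodictype}). For (2)$\Rightarrow$(3), you pigeonhole directly on the finitely many values $\Pi(\sigma^n\omega)\in\mathcal{C}$ along an orbit supplied by Lemma~\ref{lem:syminfiniteslice}; this is valid and shorter than the paper's branching-tree count via Lemma~\ref{lem:tree}.

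The genuine gap is the commensurable statement. Nothing is proved there, and the sketched steps are false as stated:
\begin{enumerate}
\item Commensurability only places all cylinder ratios in a lattice $r^{\frac1D\mathbb{Z}}$. It does not force the ratio sequences along two fixed codings of $c$ to share a value; they could be $r^{2n}$ and $r^{2n-1}$.
\item Two cylinder maps $\varphi_u,\varphi_{u'}$ of equal ratio differ by a translation or a reflection. This fixes $c$ only if $\varphi_u^{-1}(c)=\varphi_{u'}^{-1}(c)$, which is exactly the kind of coincidence you cannot assume.
\item The set $g^k(\varphi_b(K))=K_{\i^{k}b}$ lies in $\varphi_{\i_1}(K)$ but not in $\varphi_b(K)$. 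So iterating $g$ does not keep points inside $\varphi_b(K)\cap\varphi_{\i_1}(K)$.
\end{enumerate}
Above all, you have no mechanism that produces a second point in an overlap, which, as you note yourself, is the whole difficulty.

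The missing idea is that, for words of equal signed ratio, coincidences depend only on scale. If $\lambda_{\i}=\lambda_{\i'}$, then
\[
    \varphi_{\i\omega}(x)-\varphi_{\i'\omega}(x')=\lambda_{\i\omega}(x-x')+\varphi_{\i}(0)-\varphi_{\i'}(0)
\]
depends on $\omega$ only through $\lambda_\omega$. Hence a single coincidence $\varphi_{\i\j}(x)=\varphi_{\i'\j}(x')$ with $x,x'\in K$ yields the point $\varphi_{\i\omega}(x)\in K_{\i}\cap K_{\i'}$ for every $\omega$ with $\lambda_\omega=\lambda_{\j}$.

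Commensurability supplies at least $p$ such $\omega$ when $\j\in\Lambda^q$ with $q$ large: trade $a$ copies of a frequent letter $i$ for $b$ copies of another letter $j$ with $\lambda_i^{a}=\lambda_j^{b}$ (Lemma~\ref{lem:atleastp}). These points are pairwise distinct, since equal values would force an exact overlap $\varphi_\omega=\varphi_{\omega'}$, which the FIP rules out. Taking $p=\#\mathcal{C}+1$ contradicts $K_{\i}\cap K_{\i'}\subset\varphi_{\i\wedge\i'}(\mathcal{C})$. That inclusion applies because distinct words of equal ratio are incomparable.

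To find the required pair, take more than $C(\#\Lambda+1)^{q}$ codings of a point with infinitely many codings; your periodic $c$ has the codings $\i^{k}b\xi$. Truncate them at the stopping set $\Lambda_{r^n}$, where commensurability leaves at most $C$ possible values of $\lambda$. Then pigeonhole first on that value and then on the tail of length $q$.
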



Viewing $K$ as the horizontal direction and $\Sigma$ as the vertical direction in the coding relation, $\mathcal{C}$ describes first level ``horizontal'' overlap, while the fibres describe ``vertical'' multiplicity. In general, finiteness of the critical set need not bound the cardinalities of the fibres. For commensurable systems, however, Theorem~\ref{thm:fipimplybd} shows that finiteness of $\mathcal{C}$ does yield such a uniform bound. So the UBC is the separation condition that survives when the p.c.f.\ hypothesis in Deng and Lau~\cite{DL08} is weakened to the FIP.

For incommensurable systems, Theorem~\ref{thm:fipimplybd} sheds new light on known counterexamples to the implication FIP $\Longrightarrow$ OSC. In the Kamalutdinov--Tetenov example, and in the orientation-preserving variant of Kadirova~\cite{Kad26}, the failure of the OSC is not due to recurrent branching since every point has at most two codings. Instead, it is incommensurability which allows a sequence of distinct neighbour maps to converge to the identity, and the OSC therefore fails by the Bandt--Graf criterion~\cite{BG92}. Our next construction shows that the opposite coding behaviour can occur with exactly a singleton critical set.

\begin{theorem}\label{thm:fipnewway}
    There exists a one-parameter family of self-similar IFSs with four maps on $\R$ such that, for Lebesgue almost every parameter,
    \begin{enumerate}
        \item the critical set consists of a single point;
        \item the IFS fails the OSC; and
        \item the associated attractor contains a point with infinitely many codings.
    \end{enumerate}
    In particular, the UBC fails.
\end{theorem}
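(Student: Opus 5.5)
Since Theorem~\ref{thm:fipimplybd} applies to any IFS with a finite critical set, the plan is to build a family in which condition (3) holds by construction. Once the critical set is shown to be the single point $p$, Theorem~\ref{thm:fipimplybd} immediately gives (3), and it also gives failure of the UBC. Failure of the OSC then follows, because the OSC implies the UBC. So the real work is to design four maps so that two of the first-level pieces meet at a point $p$ which is the fixed point of some composition $\varphi_{\i}$, and so that, for almost every parameter, no other pairwise intersections occur.

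For the concrete design, I would take $\varphi_1(x)=\lambda x$ and $\varphi_4(x)=\lambda x+1-\lambda$ with small $\lambda$, so that $0,1\in K$ and $I=[0,1]$. I would then add two maps $\varphi_2,\varphi_3$ with a free parameter $t$ in their ratios or translations. These are arranged so that $\varphi_2(K)$ and $\varphi_3(K)$ touch at a prescribed point. The simplest device is to make the right endpoint of $\varphi_2(I)$ equal the left endpoint of $\varphi_3(I)$, with these intervals otherwise disjoint. That produces an intersection but no periodic recurrence, and the OSC would hold. To force recurrence, I would instead let $\varphi_2(I)$ and $\varphi_3(I)$ genuinely overlap as intervals, and choose the translations as functions of $t$ so that a common point $p=\varphi_2(a)=\varphi_3(b)$ with $a,b\in K$ is forced. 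Here $p$ should be the fixed point of, say, $\varphi_2\varphi_1$, so that $a=\varphi_1(p)$ has the periodic coding $(12)^\infty$ shifted. The condition $\varphi_3(b)=p$ with $b$ chosen as an endpoint, e.g.\ $b=1$, gives one linear equation, which fixes one translation in terms of the others and $t$.

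The main obstacle is proving that for Lebesgue almost every $t$ there are no further intersections. That means $\varphi_2(K)\cap\varphi_3(K)=\{p\}$ and all other pairs are disjoint. The other pairs can be made disjoint at the level of convex hulls by keeping $\varphi_1(I),\varphi_4(I)$ far from the middle pieces. For the middle pair, the approach is to show that the set of $t$ with an extra intersection point is Lebesgue-null. I would write $\varphi_2(K)\cap\varphi_3(K)\setminus\{p\}$ as a countable union over pairs of cylinders $\varphi_{2\i}(K)$, $\varphi_{3\j}(K)$ that are disjoint from small neighbourhoods of $p$. I would then use a transversality-type argument to control each piece. The difference $\Pi_t(2\omega)-\Pi_t(3\eta)$ has nonzero $t$-derivative, uniformly for $\omega,\eta$ away from the codings of $p$. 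As a result, $\{t: \varphi_{2}(K_t)\cap\varphi_3(K_t)\text{ meets a fixed region}\}$ is contained in the $t$-projection of $K\times K$-type sets. Keeping the similarity dimension small, say $\dims<1/2$, makes the relevant difference set $K_t-K_t$ have dimension below $1$, hence Lebesgue measure zero, for each $t$. By Fubini, almost every $t$ then has no coincidence.

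Near $p$ itself, I would localise using the self-similarity at $p$. Points of $\varphi_2(K)\cap\varphi_3(K)$ close to $p$ map, under the inverse of $\varphi_2\varphi_1$, to intersections at a larger scale. So it suffices to exclude intersections in a fixed annulus around $p$, and this reduces to the previous case. I expect this transversality step, and in particular handling neighbourhoods of $p$ uniformly, to be the hardest part.

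Finally, I would observe that incommensurability of the ratios must hold for almost every $t$. This is consistent with the last clause of Theorem~\ref{thm:fipimplybd}, and it is automatic if $t$ enters a ratio.
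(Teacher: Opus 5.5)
Your reduction of items (2), (3) and the failure of the UBC to item (1) is fine, since Example~\ref{exa:periodictype} already gives the infinite fibre. Decomposing extra intersections into countably many cylinder pairs that avoid the codings of $p$ is also the right framework. The gap is in proving that each piece is null.

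\emph{The localisation near $p$ is false.} Write $r_2$ for the ratio of $\varphi_2$. The set $\varphi_3(K)$ near $p=\varphi_3(1)$ is invariant under $\varphi_3\varphi_4\varphi_3^{-1}$, which has ratio $\lambda$. By contrast, $(\varphi_2\varphi_1)^{-1}$ expands by $(r_2\lambda)^{-1}$. It therefore sends $\varphi_3(K)\cap B(p,\rho)$ to a copy of $\varphi_3(K)$ rescaled about $p$ by that factor, and this copy has no reason to lie in $\varphi_3(K)$ unless $r_2\lambda$ and $\lambda$ are multiplicatively dependent. So intersections near $p$ do not reduce to a fixed annulus. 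They are indexed by two independent scales, for instance cylinders $[2(12)^n\cdots]$ against $[34^m\cdots]$.

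\emph{Transversality is asserted rather than derived, and near $p$ it is the whole difficulty.} You tie the translations together so that $p$ stays a common point for every $t$. Hence $\Pi_t(2\omega)-\Pi_t(3\eta)$ vanishes identically at the codings of $p$. For a pair at scales $(n,m)$ the difference has, up to signs, the form $(r_2\lambda)^nA(t)-r_3\lambda^mB(t)$. Near a zero, both the difference and its $t$-derivative are small. What you need is a \emph{relative} transversality for every such pair: the derivative must be $\gtrsim$ the scale whenever the difference is $\ll$ the scale. Nothing in your design supplies this. If $t$ enters only translations, differentiating produces no dominant term. You would then need $t\mapsto\log|A(t)/B(t)|$ to have derivative bounded away from $0$ uniformly over all tails, and you have not checked this.

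\emph{The Fubini step does not work as stated.} Knowing $\dimh(K_t-K_t)<1$ for each fixed $t$ says nothing about whether $0$ lies in the relevant difference set, and that set moves with $t$. The correct tool is the parameter-space covering argument \cite[Lemma 7.3.7]{BSS23}. It turns pairwise transversality into a bound $\dimh\le 2\max s<1$ on the exceptional parameters.

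The missing idea is how the paper manufactures the relative transversality. The critical point is the fixed point of a \emph{single} map whose contraction ratio \emph{is} the parameter, and the point does not move. In the paper, $1/2$ is fixed by $\varphi_{q,1}(x)=qx+\frac{1-q}{2}$, and $\varphi_{q,2}(1)=1/2$ for all $q$. Every extra intersection is coded by a pair in the countable family $\mathcal U\times\mathcal V$. For $\bi\in[\bm u]$ and $\bj\in[\bm v]$, with $\bm u$ beginning with $1^n$ and $\bm v=23^mj$, one has $\Pi_q(\bi)-\Pi_q(\bj)=-q^nA(q)+p^{m+1}B(q)$. Here $p$ is the paper's fixed small ratio, not your critical point. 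The key estimates are $|A'|\le 2A$ and $|B'|\le 2B$, uniformly in the tails. Differentiating $q^n$ then produces the term $nq^{n-1}A(q)$, which dominates because $1/q>100$. Hence each pair satisfies~\eqref{eq:bsstrans} with its own constant $\delta=\tilde c_{\bm u}/4$. Lemma~\ref{lem:bsstrans} together with $s(q)<1/2$ makes each exceptional parameter set null. No uniformity near the critical point is needed, since a countable union of null sets is null. Your annulus reduction can therefore be dropped entirely once pairwise transversality is established.
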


Thus the cardinality of the critical set alone does not determine coding multiplicity. Our construction is also more economical: the earlier examples use six maps, whereas ours uses only four. Its proof combines the recurrence criterion in Theorem~\ref{thm:fipimplybd} with a transversality argument developed in~\cite[Section 7.3]{BSS23}.

\subsection{Coding fibres and critical sets: dimension and measure}

The preceding results concern fibre cardinalities when the critical set is geometrically small. We now turn to fibre dimensions and the dimension and measure of the critical set.

Let $s=s(\Phi)$ be the \emph{similarity dimension} of $\Phi$, namely the unique solution to $\sum_{i\in\Lambda} r_i^s=1$, and let $\nu=\sum_{i\in\Lambda} r_i^s\varphi_i\nu$ be the \emph{natural self-similar measure}. By Schief's criterion~\cite{Sch94}, if the OSC fails then $\mathcal H^s(K)=0$. On the other hand, equipped with its natural metric, the symbolic space $\Sigma$ has positive $s$-dimensional Hausdorff measure. It is then natural to ask how the loss of $\mathcal{H}^s$-measure or of dimension under $\Pi$ is reflected in the size of the fibres. The following observation makes this quantitative at the level of dimension.

\begin{proposition}\label{prop:fibrebound}
    Let $\nu$ be the natural self-similar measure. 
    \begin{enumerate}
        \item For $\nu$-almost every $x\in K$, 
        \begin{equation}\label{eq:fibredim}
            \dimh\Pi^{-1}(x) = \dimb\Pi^{-1}(x) = s-\dimh\nu.
        \end{equation}
        Consequently, if $\dimh\nu<s$, then $\Pi^{-1}(x)$ has positive Hausdorff dimension and hence continuum cardinality for $\nu$-almost every $x\in K$. Also, 
        \begin{equation}\label{eq:nddequa}
            \dimh\nu = s \,\,\Longleftrightarrow\,\, \dimb\Pi^{-1}(x)=0 \text{ for $\nu$-almost every $x\in K$}.
        \end{equation}
        \item If $\Phi$ satisfies the exponential separation condition (ESC; see Definition~\ref{def:esc}), then 
        \[
            \dim_{\textup{A}}\Pi^{-1}(x)\leq\max\{s-1,0\} \quad \text{ for every } x\in K.
        \]
    \end{enumerate}
\end{proposition}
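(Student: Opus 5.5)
For part (1), I would work with the Bernoulli measure $\mu=(r_i^s)^{\mathbb N}$ on $\Sigma$, so that $\nu=\Pi_*\mu$. I would equip $\Sigma$ with the metric $d(\i,\j)=r_{\i\wedge\j}$, where $\i\wedge\j$ is the longest common prefix. Then $\mu([\i])=\diam[\i]^s$, so $\mu$ is $s$-Ahlfors regular on $\Sigma$.

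The plan is to disintegrate $\mu$ along the fibres of $\Pi$, writing $\mu=\int\mu_x\rd\nu(x)$ with $\mu_x$ supported on $\Pi^{-1}(x)$. By Feng--Hu, $\nu$ is exact dimensional with $\dimh\nu=\alpha$. Their entropy-dimension argument, equivalently the projection entropy of Feng--Hu, shows that for $\mu$-a.e.\ $\i$ and $\nu$-a.e.\ $x=\Pi(\i)$,
\[
    \lim_{n}\frac{\log\mu_x([\i|_n])}{\log r_{\i|_n}} = s-\alpha.
\]
Heuristically, $\mu([\i|_n])\approx\nu(B(x,r_{\i|_n}))\cdot\mu_x([\i|_n])$; the rigorous version uses the martingale convergence of conditional measures. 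This local dimension statement and the mass distribution principle give $\dimh\Pi^{-1}(x)\ge s-\alpha$ for $\nu$-a.e.\ $x$.

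For the upper bound I would bound $\overline{\dim}_B$. Cover $\Pi^{-1}(x)$ by the cylinders $[\i]$ in the Moran cut-set $\Lambda_\rho=\{\i: r_{\i}\approx\rho\}$ with $x\in\varphi_{\i}(K)$. The number of such cylinders is
\[
    N_\rho(x)=\#\{\i\in\Lambda_\rho: x\in\varphi_{\i}(I)\}.
\]
Since $\nu(B(x,2\rho))\ge\sum_{\i\in\Lambda_\rho,\,x\in\varphi_{\i}(I)}\rho^s\,\nu(\ldots)$ is not immediately available, I would instead use a Borel--Cantelli/Fubini argument. First, $\int N_\rho\rd\nu\lesssim\rho^{-s}\int\nu(B(y,\rho))\rd\nu(y)$. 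Second, and more usefully, apply the Lebesgue density argument to the sets $\{x:N_\rho(x)\ge\rho^{-(s-\alpha+\epsilon)}\}$: where $N_\rho(x)$ is large, the ball $B(x,2\rho)$ carries mass at least $\gtrsim N_\rho(x)\rho^s$ coming from nearby cylinder intervals. Summing over $\rho=2^{-k}$ would give $\overline{\dim}_B\Pi^{-1}(x)\le s-\alpha$ a.e.

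Combining the two bounds gives \eqref{eq:fibredim}. Positive dimension implies uncountability, and a compact set of positive dimension has continuum cardinality. The equivalence \eqref{eq:nddequa} is then immediate from \eqref{eq:fibredim}.

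The main obstacle is this upper bound. The issue is that mass $\rho^s$ per cylinder only holds for $\mu$, not for $\nu$ restricted to $\varphi_{\i}(K)$, so the inequality $\nu(B(x,2\rho))\gtrsim N_\rho(x)\rho^s$ holds only in a typical sense. My first attempt would use $\nu(B(x,2\rho))\ge\sum\rho^s\nu(B(\varphi_{\i}^{-1}x,\cdot))$, restricted to the typical cylinders where the rescaled point has a not-too-small local mass. I would control the atypical ones by Egorov on exact dimensionality, together with a Fubini count in $\mu$.

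For part (2), fix $x$ and scales $R>\rho$, and take a cylinder $[\j]$ with $r_{\j}\approx R$ meeting $\Pi^{-1}(x)$. The number of $\rho$-cylinders $[\j\i]$ inside it meeting the fibre is at most the number of $\i$ with $r_{\i}\approx\rho/R$ and $y=\varphi_{\j}^{-1}x\in\varphi_{\i}(I)$. Under the ESC, distinct maps at generation $n$ are $\gtrsim e^{-cn}$ apart. Working at the scale where $\rho/R\approx t$, distinct $\varphi_{\i}$ with equal ratio, whose intervals contain a common point $y$, have translations that differ by at most $|I|t$, and they are separated by at least $t^{1+\epsilon}$ up to subexponential factors. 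This gives a count of $\lesssim (R/\rho)^{\max(s-1,0)+\epsilon}$. When $s\le 1$, the total $\#\Lambda_t\approx t^{-s}$ lies in the admissible range anyway, and by ESC the count is subexponential. The difficulty is that ESC is only known along a subsequence of scales and only for exact overlaps of equal-ratio words, so I would pass to a subsequence and use monotonicity in the scale to reach $\dim_{\textup{A}}\le\max\{s-1,0\}$.
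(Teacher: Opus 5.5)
Your lower bound in (1) is the paper's argument: Feng--Hu's dimension formula for the conditional measures $\mu_x$, followed by the mass distribution principle. Your localisation in (2), via a prefix of size $R$, also matches the paper. The upper bound in (1), however, is never established, and the obstacle you describe does not exist. Write $N_\delta(x):=\#\{\i\in\Lambda_\delta: x\in K_{\i}\}$. The bound $\nu(B(x,3\delta))\gtrsim\delta^s N_\delta(x)$ holds for \emph{every} $x\in K$, provided you push forward $\mu|_{[\i]}$ instead of restricting $\nu$ to $K_{\i}$. The cylinders $[\i]$, $\i\in\Lambda_\delta$, are pairwise disjoint, and $x\in K_{\i}$ implies $\Pi([\i])=K_{\i}\subset B(x,3\delta)$. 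Hence $\nu(B(x,3\delta))=\mu(\Pi^{-1}B(x,3\delta))\ge\sum_{\i\in\Lambda_\delta,\,x\in K_{\i}} r_{\i}^s\gtrsim\delta^sN_\delta(x)$; overlaps only add mass. Your own ``first attempt'' already gives this. The set $\varphi_{\i}^{-1}B(x,3\delta)$ is a ball of radius $3\delta/r_{\i}\ge 3$ centred in $K\subset B(0,1)$, so it has full $\nu$-measure. No restriction to typical cylinders, Egorov or Fubini is needed. Exact dimensionality then gives $\nu(B(x,3\delta))\le\delta^{\dimh\nu-\epsilon}$ for small $\delta$ at $\nu$-a.e.\ $x$, so $\dimub\Pi^{-1}(x)\le s-\dimh\nu$. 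This is Lemma~\ref{lem:localfibreest}.

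Part (2) has a genuine gap. The ESC only asserts $\Delta_{n_k}\ge c^{n_k}$ for some constant $c>0$, which may be far smaller than every $r_i$. It does not give separation $t^{1+\epsilon}$ at scale $t$. Already in Proposition~\ref{exa:escwithnddcri}, the ESC is verified with $\Delta_n\ge\frac18\cdot 64^{-n}$ at cylinder scale $t=4^{-n}$, i.e.\ separation of order $t^3$. Equal-ratio translations lying in an interval of length $\approx t$ and $c^n$-separated are only bounded in number by about $t\,c^{-n}$. That is exponentially large and can exceed the trivial bound $\#\Lambda_t\approx t^{-s}$. So ``by ESC the count is subexponential'' does not follow from any such packing count. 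Note also that $t^{-s}$ is not in the admissible range $t^{-\epsilon}$ when $s\le 1$. Nor can a sparse sequence $(n_k)$ be bridged by monotonicity in the scale.

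The missing ingredient is deep: Shmerkin's theorem that under the ESC the $L^q$ dimension of $\nu$ equals $\min\{s,1\}$ for every $q>1$. This yields the uniform Frostman bound $\nu(B(x,r))\lesssim_\epsilon r^{\min\{s,1\}-\epsilon}$ for all $x\in\R$ and $0<r<1$ (Lemma~\ref{lem:Shm19}). Inserted into the deterministic estimate above, it gives $N_\delta(x)\lesssim_\epsilon\delta^{-(\max\{s-1,0\}+\epsilon)}$ uniformly in $x\in K$. Your rescaling then converts this into the Assouad bound. Use $\Pi^{-1}(x)\cap[\i]=\{\i\eta:\eta\in\Pi^{-1}(\varphi_{\i}^{-1}x)\}$ and $\rho(\i\eta,\i\eta')=r_{\i}\rho(\eta,\eta')$, where $\i\in\Lambda_R$ is the prefix of the centre $\omega\in\Pi^{-1}(x)$.
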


Part (1) follows directly from the dimension conservation principle of Feng and Hu~\cite{FH09} and an elementary local covering estimate. In particular, from~\eqref{eq:nddequa} we see that the UBC implies $\dimh\nu=s$. Part (2) uses Shmerkin's $L^q$-dimension estimate~\cite{Shm19}. Thus a strict drop in dimension forces a strong symbolic manifestation. This regime is closely related to the \emph{exact overlap conjecture} originally proposed by the first author~\cite{Sim96}, which predicts that a drop below the expected dimension $\min\{1,s\}$ can occur only if two distinct finite words define the same cylinder map. An exact overlap immediately generates large fibres because repeated substitutions between the two words produce a point with continuum many codings. Major progress on this conjecture has been made by Hochman, Rapaport, Varj\'u and others; see~\cite{Hoc14,Var19,Rap22,RV24} and the references therein.  

When $\dim_H\nu = s$, the situation becomes subtler. Proposition~\ref{prop:fibrebound} gives only zero-dimensional typical fibres. Together with an earlier joint work of the first author and Kolossv\'ary~\cite{KS19}, we get the following equivalences:
\begin{align*}
    \dimh\nu = s \,\,&\Longleftrightarrow\,\, \dimb\Pi^{-1}(x)=0 \text{ for $\nu$-almost every $x\in K$} \\
    &\Longleftrightarrow\,\, \exists \text{ Borel } B\subset\Sigma \text{ with } \mu(B)=0 \text{ and $\Pi$ is injective on $\Sigma\setminus B$} \\
    &\Longleftrightarrow\,\, \mu_{\xi(\bi)}=\delta_{\bi} \text{ for $\mu$-almost every $\bi\in\Sigma$},
\end{align*}
where $\mu$ is the natural Bernoulli measure on $\Sigma$ with weights $(r_i^s)$, $\xi(\bi)=\Pi^{-1}(\Pi(\bi))$ and $\mu_{\xi(\bi)}$ is the conditional measure on the fibre $\xi(\bi)$ (see~\cite[Chapter 12]{BSS23}). However, these equivalences do not by themselves determine the cardinalities of the full coding fibres. Additional codings, if present, may lie in the discarded $\mu$-null set.

By~\eqref{eq:branchingorbit}, if $\dimh\mathcal{C}<\dimh\nu$ then
\[
	\nu\bigl(\{x\in K:\#\Pi^{-1}(x)\geq2\} \bigr)=0.
\]
Under the OSC such a strict bound is known; see~\cite{Mor99, Ols02}. Without the OSC, the critical set and exceptional fibres can be much larger. The following example shows that even exponential separation, together with the absence of dimension drop, does not prevent the critical set from having the full Hausdorff dimension of the attractor, nor does it prevent exceptional fibres from having large cardinality. 

\begin{proposition}\label{exa:escwithnddcri}
    Set $u:=\sum_{k=0}^\infty 4^{-2^k}$ and consider
    \[
        \Phi = \Big\{ \varphi_0(x) = \frac{x}{4}, \varphi_u(x)=\frac{x}{4}+u, \varphi_1(x)=\frac{x}{4}+1 \Big\}.
    \]
    Let $K$ be its attractor. Then $\Phi$ satisfies the ESC and
    \[
        \dimh (\varphi_0(K)\cap\varphi_u(K)) = \dimh K = \frac{\log 3}{\log 4}.
    \]
    Moreover, there exists a point $x_*\in K$ with $\#\Pi^{-1}(x_*)=2^{\aleph_0}$.
\end{proposition}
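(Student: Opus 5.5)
The plan has four steps: write everything in base-$4$ digit language, exhibit a large explicit subset of $\varphi_0(K)\cap\varphi_u(K)$, bound its dimension from below by an energy argument, and then build $x_*$ from many disjoint copies of the same overlap mechanism. The ESC verification is the only number-theoretic input.

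\textbf{Digit language.} For $\bi=i_1i_2\cdots$ with $i_n\in\{0,u,1\}$ we have $\Pi(\bi)=\sum_{n\ge1} i_n4^{-(n-1)}$. The key identity is
\[
4u=\sum_{k\ge0}4^{-(2^k-1)}=\sum_{n\in P}4^{-(n-1)},\qquad P:=\{2^k:k\ge0\}.
\]
Take any $\bj\in\Sigma$ and define two sequences: $\bj^+$ agrees with $\bj$ off $P$ and has digit $1$ on $P$, while $\bj^-$ agrees with $\bj$ off $P$ and has digit $0$ on $P$. Then $\Pi(\bj^+)-\Pi(\bj^-)=4u$, which is exactly the relation
\[
\varphi_0(\Pi(\bj^+))=\varphi_u(\Pi(\bj^-)).
\]
Hence $\varphi_0(\Pi(\Sigma_P))\subset\varphi_0(K)\cap\varphi_u(K)$, where $\Sigma_P:=\{\bi: i_n=1 \ \forall n\in P\}$. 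Note that $P$ has zero density.

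\textbf{Step 1: ESC.} For distinct words of length $n$, the difference $\varphi_{\i}(0)-\varphi_{\j}(0)$ has the form $a+bu$ with $a,b\in 4^{-n}\mathbb Z$ and $|a|,|b|\le n$. If $b=0$ and $a\neq0$, then $|a|\ge4^{-n}$ directly. If $b\ne0$, I need $|bu+a|\ge 4^{-Cn}$. I will prove this from the lacunary base-$4$ expansion of $u$: the digits of $4^{n}bu$ after the decimal point contain a nonzero block within distance $O(n)$, because consecutive gaps $2^{k+1}-2^k$ are only linear in $2^k$. Equivalently, $u$ has finite irrationality measure (about $2$), which gives a polynomial-in-$4^n$ lower bound. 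The same argument rules out exact overlaps, since $u\notin\mathbb Q$. I expect this Diophantine estimate to be the main technical obstacle, but it is routine.

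\textbf{Step 2: dimension.} The upper bound $\dimh(\varphi_0(K)\cap\varphi_u(K))\le\dimh K\le\log3/\log4$ is trivial. For the lower bound, let $\mu$ be the uniform Bernoulli measure, and let $\mu_P$ be the measure uniform on the free coordinates with coordinates in $P$ fixed to $1$. On the first $n$ coordinates,
\[
\frac{d\mu_P}{d\mu}\le 3^{\#(P\cap[1,n])}\le 3^{\log_2 n+1},
\]
which is subexponential in $n$. Hence for $r=4^{-n}$,
\[
\int \Pi\mu_P(B(x,r))\,d\Pi\mu_P(x)\le C\,n^{C}\int\nu(B(x,r))\,d\nu(x).
\]
By ESC and Shmerkin's $L^q$-dimension theorem~\cite{Shm19} (the same input as in Proposition~\ref{prop:fibrebound}(2)), $D_2(\nu)=\min\{1,s\}=s=\log3/\log4$. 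So the right-hand side is at most $r^{s-\varepsilon}$, the $t$-energy of $\Pi\mu_P$ is finite for every $t<s$, and $\dimh\varphi_0(\Pi(\Sigma_P))\ge s$. Finally, ESC together with the result of Hochman~\cite{Hoc14} gives $\dimh K=s$.

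\textbf{Step 3: a continuum fibre.} Choose positions $m_1<m_2<\cdots$ and put $P_j:=m_j+P$, such that the sets $\{m_j\}\cup P_j$ are pairwise disjoint and no $m_i$ lies in any $P_j$. This amounts to requiring that no difference $m_i-m_j$ is of the form $2^a-2^b$ or $\pm2^a$. A greedy choice works, or an explicit one such as $m_j=3\cdot8^j$, to be checked via binary expansions. Define $\bi_*$ by $i_{m_j}=0$, $i_n=1$ for $n\in\bigcup_jP_j$, and $0$ elsewhere, and set $x_*:=\Pi(\bi_*)$. For any $S\subset\mathbb N$, let $\bi_S$ be obtained by replacing, for each $j\in S$, the digit at $m_j$ by $u$ and the digits on $P_j$ by $0$. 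By disjointness, these changes do not interfere with one another. Applying the identity from the digit-language paragraph to the tail at $m_j$, with the untouched digits playing the role of the free coordinates, each switch preserves $\Pi$. This holds for finite $S$ first; for infinite $S$ it follows by continuity of $\Pi$, since the changes are made at positions tending to infinity. The map $S\mapsto\bi_S$ is injective, so $\#\Pi^{-1}(x_*)=2^{\aleph_0}$.
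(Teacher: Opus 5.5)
Your proposal is correct and follows essentially the paper's own route. It uses the same subshift $\Sigma_P$ with the identity $4u=\sum_{n\in P}4^{-(n-1)}$, the same subexponential density factor $3^{\#(P\cap[1,n])}$ fed into Shmerkin's theorem, and the same block-switching construction on disjoint translates of $\{0\}\cup P$. The Diophantine estimate you leave as routine is exactly what the paper proves, namely $|u-p/q|>1/(32q^3)$ via the approximants with denominators $4^{2^m}$.

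The only real variation is that you pass through $D_2(\nu)=s$ and finite $t$-energy, whereas the paper uses the uniform Frostman bound $\nu(B(x,r))\lesssim r^{\alpha}$ and the mass distribution principle. Your comparison inequality should have $B(x,Cr)$ on the right, since covering by pairs of level-$n$ cylinders enlarges the radius, but this does not affect the conclusion.
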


The example also naturally raises the question of whether $\mathcal{C}$ can nevertheless carry positive mass with respect to the natural self-similar measure. Bandt and Graf~\cite{BG92} asserted that $\dim_H K = s$ implies $\nu(\mathcal{C})=0$, but a gap in their argument was later pointed out in~\cite{SS07}. The conclusion does hold when $\mathcal{P}^s(K)>0$, because then $\mathcal{P}^s|_K$ is a constant multiple of $\nu$ and $\mathcal{P}^s(\mathcal{C})$ always vanishes; see~\cite{PSS00}. The remaining case is thus $\mathcal{P}^s(K)=0$. In that case, the $s$-density of $\nu$ is infinite $\nu$-almost everywhere. We obtain partial information as follows.

\begin{proposition}\label{thm:singularpieces}
    Let $\mathbf{p}=(p_i)_{i\in\Lambda}$ be a probability vector with $p_i>0$ and let $\nu_{\mathbf{p}}=\sum_{i\in\Lambda} p_i\varphi_i\nu_{\mathbf{p}}$ be the associated self-similar measure. 
    \begin{enumerate}
        \item If $\dimh\nu_{\mathbf{p}}=\dims\nu_{\mathbf{p}}$, then $\varphi_i\nu_{\mathbf{p}}\perp\varphi_j\nu_{\mathbf{p}}$ for all distinct $i, j\in\Lambda$;
        \item For the natural self-similar measure $\nu$, if $\nu(\mathcal{C})>0$ then for $\nu$-almost every $x\in K$, there is $c_x>0$ such that
        \[
            \nu(B(x,r)) \geq c_x r^s\log(1/r) \quad \text{ for all } 0<r<1.
        \]
    \end{enumerate}
\end{proposition}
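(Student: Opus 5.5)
For part (1), the plan is to reduce the statement to injectivity of the coding map off a null set, and then read off singularity of the pieces. Let $\mu_{\mathbf p}$ be the Bernoulli measure on $\Sigma$ with weights $\mathbf p$, so that $\nu_{\mathbf p}=\Pi_*\mu_{\mathbf p}$. Here $\dims\nu_{\mathbf p}$ is the entropy over Lyapunov exponent, $h(\mu_{\mathbf p})/\chi(\mu_{\mathbf p})$.

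By the Feng--Hu dimension theory~\cite{FH09}, $\nu_{\mathbf p}$ is exact dimensional with $\dimh\nu_{\mathbf p}=h_\Pi/\chi$, where $h_\Pi$ is the projection entropy. Equality $h_\Pi=h(\mu_{\mathbf p})$ means that the conditional measures of $\mu_{\mathbf p}$ on the fibres $\xi(\bi)=\Pi^{-1}(\Pi(\bi))$ are Dirac masses $\delta_{\bi}$ for $\mu_{\mathbf p}$-almost every $\bi$. This is the same argument as in~\cite{KS19} and~\cite[Chapter 12]{BSS23} quoted after Proposition~\ref{prop:fibrebound}; it works verbatim for general $\mathbf p$. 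It yields a Borel set $B\subset\Sigma$ with $\mu_{\mathbf p}(B)=0$ such that $\Pi$ is injective on $\Sigma\setminus B$.

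Set $A_i:=\Pi([i]\setminus B)$. By the Lusin--Souslin theorem, injective Borel images of Borel sets are Borel, so each $A_i$ is Borel. The sets $A_i$ are pairwise disjoint by injectivity. Moreover $p_i\,\varphi_i\nu_{\mathbf p}=\Pi_*(\mu_{\mathbf p}|_{[i]})$ gives full mass to $A_i$. Hence $\varphi_i\nu_{\mathbf p}\perp\varphi_j\nu_{\mathbf p}$ for $i\neq j$. The only delicate point is checking that the fibre-entropy characterisation is available for non-natural weights; I expect it to carry over directly.

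For part (2), the plan is to count cylinders of comparable size containing $x$. For $0<r<1$ and $\bj\in\Sigma$, let $n(\bj,r)$ be the first $n$ with $r_{j_1\cdots j_n}\le r$. Let $\mathcal W_r(x)$ be the set of such stopping words $w$ with $x\in\varphi_w(K)$. Since $\diam\varphi_w(I)\le r\,|I|$, each such cylinder lies in $B(x,r|I|)$. As $\nu=\sum_w r_w^s\varphi_w\nu$ over the stopping set, we get
\[
\nu(B(x,r|I|))\ \ge\ \sum_{w\in\mathcal W_r(x)} r_w^s\ \ge\ r_{\min}^s\, r^s\,\#\mathcal W_r(x).
\]
So it suffices to show $\#\mathcal W_r(x)\ge c\log(1/r)$ for small $r$.

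Fix a coding $\bi$ of $x$. Whenever $\Pi(\sigma^k\bi)\in\mathcal C$ with $k<n(\bi,r)$, there is another coding $\bj^{(k)}$ of $\Pi(\sigma^k\bi)$ with $j^{(k)}_1\ne i_{k+1}$. Then $i_1\cdots i_k\bj^{(k)}$ is a coding of $x$, and truncating it at its stopping time gives a word in $\mathcal W_r(x)$. These words are pairwise distinct for distinct $k$, since each first differs from $\bi$ at position $k+1$. Hence
\[
\#\mathcal W_r(x)\ \ge\ \#\{0\le k<n(\bi,r):\sigma^k\bi\in\Pi^{-1}(\mathcal C)\}.
\]

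Finally, $\mu(\Pi^{-1}\mathcal C)=\nu(\mathcal C)>0$, and $\mu$ is ergodic for $\sigma$. By Birkhoff's theorem, for $\mu$-almost every $\bi$ the count above is at least $\tfrac12\nu(\mathcal C)\,n(\bi,r)$ once $r$ is small. Since $n(\bi,r)\ge\log(1/r)/\log(1/r_{\min})$, this gives the bound for $r<r_x$.

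Pushing forward, the full-measure set of good codings maps to a set of full $\nu$-measure. For $r_x\le r<1$, use $\nu(B(x,r))\ge\nu(B(x,r_x))>0$ to adjust $c_x$. Replacing $r|I|$ by $r$ only changes constants.

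The main obstacle I anticipate is bookkeeping rather than conceptual. The branch words must have comparable size, which holds up to the factor $r_{\min}$ built into the stopping time. I also need to make sure the branch at level $k$ is still ``alive'' before scale $r$, which is why I require $k<n(\bi,r)$.
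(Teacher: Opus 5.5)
Both parts of your proposal are correct. Part (2) is essentially the paper's argument with different bookkeeping. The paper iterates $d(\Pi(\bi),\delta)\ge d(\Pi(\sigma\bi),\delta/r_{i_1})+c_0\mathbbm{1}_{\mathcal C}(\Pi(\bi))$ for $d(y,r)=\nu(B(y,r))/r^s$, using $\nu(B(z,r))\ge c_0r^s$ on $K$, whereas you count distinct stopping cylinders through $x$. In both versions each visit of $\sigma^k\bi$ to $\Pi^{-1}(\mathcal C)$ before scale $r$ contributes a disjoint piece of mass $\gtrsim r^s$, and Birkhoff supplies linearly many visits. Your condition $k<n(\bi,r)$ is exactly what keeps these pieces distinct.

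Part (1) takes a genuinely different route. The paper never passes through a.e.\ injectivity. Bernoulli independence gives $H_{\mu_{\mathbf p}}(\mathcal P\mid\sigma^{-1}\Pi^{-1}\mathcal B(\R))=H(\mathcal P)$, so Feng--Hu together with $\dimh\nu_{\mathbf p}=\dims\nu_{\mathbf p}$ forces $H_{\mu_{\mathbf p}}(\mathcal P\mid\Pi^{-1}\mathcal B(\R))=0$. The paper then identifies $\mathbb E(\mathbbm 1_{[i]}\mid\Pi^{-1}\mathcal B(\R))$ with $f_i\circ\Pi$, where $f_i=\mathrm d(p_i\varphi_i\nu_{\mathbf p})/\mathrm d\nu_{\mathbf p}$. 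Hence $(f_i(x))_i$ is a.e.\ a vertex of the simplex, and the sets where $f_i=1$ and $f_j=0$ for $j\neq i$ are disjoint Borel carriers. Only the first symbol has to be a.e.\ determined by the point, which is immediate, and Lusin--Souslin is not needed.

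Your route proves the stronger fact that $\Pi$ is injective off a $\mu_{\mathbf p}$-null Borel set. The step you flagged is not covered by the paper's citation, which is stated for natural weights, but it does hold. On $[i]$ one has $\Pi\circ\sigma=\varphi_i^{-1}\circ\Pi$, so $\Pi^{-1}\mathcal B(\R)\vee\mathcal P\supseteq\sigma^{-1}\Pi^{-1}\mathcal B(\R)$. The chain rule and $\sigma$-invariance then give $H(\bigvee_{k<n}\sigma^{-k}\mathcal P\mid\Pi^{-1}\mathcal B(\R))\le H(\mathcal P\mid\Pi^{-1}\mathcal B(\R))+H(\bigvee_{k<n-1}\sigma^{-k}\mathcal P\mid\Pi^{-1}\mathcal B(\R))$, so vanishing propagates to every $n$. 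A Borel map $g$ with $g\circ\Pi=\mathrm{id}$ $\mu_{\mathbf p}$-a.e.\ then yields your set $B$.

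You should include this argument, or an equivalent one, rather than asserting that it carries over. Alternatively, you can stop at first-symbol determinacy. A Borel $g_1$ with $g_1(\Pi(\omega))=\omega_1$ a.e.\ already gives the disjoint carriers $A_i=g_1^{-1}(i)$ without full injectivity.
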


Here $\dims\nu_{\mathbf{p}}$ denotes the \emph{similarity dimension} of the measure $\nu_{\mathbf{p}}$; see~\eqref{eq:simdimofselfsimmea}. The first assertion states that, whenever a self-similar measure has dimension equal to its similarity dimension, its first level component measures are mutually singular. The second assertion gives a quantitative consequence of positive critical mass. After completing the manuscript, we learned from De-Jun Feng that he and Meng Wu~\cite{FW26} had independently obtained the same observation as Proposition~\ref{thm:singularpieces}(1). Combining this with a projection theorem in~\cite{Wu25}, they proved that the attractor of a homogeneous IFS on the line consisting of uncountably many similitudes has Hausdorff dimension one.

\subsection{Branching in transversal families}

We finally study coding fibres and critical sets in transversal families. Consider a family
\[
    \Phi^{\t} = \{\varphi_i^{\t}(x)=\lambda_i(\t)x+d_i(\t)\}_{i\in\Lambda}, \quad \t\in U,
\]
where $U\subset\R^d$ is a bounded open set.

Transversality methods in this context date back to Pollicott and Simon~\cite{PS95} and have since been developed in several directions; see, for example, ~\cite{PS00, SSU01, Ram05} as well as the textbook~\cite{BSS23}. Roughly speaking, the transversality condition requires that coincidences among distinct symbolic branches be non-degenerate in the parameter; see Definition~\ref{de:trans} for a precise formulation. 

The following result gives a density statement and a dimension estimate for parameters admitting an infinite coding fibre.

\begin{theorem}\label{thm:denseinffibre}
    Let $\{\Phi^{\t}\}_{\t\in U}$ be a $C^1$ family of self-similar IFSs on $\mathbb{R}$ with attractor $K^{\t}$. Denote by $\widetilde{U}$ the set of parameters $\t\in U$ for which $\Phi^{\t}$ fails the SSC. If the family satisfies the transversality condition, then the set 
    \begin{equation}\label{eq:defofT}
        T := \{\t\in U: \exists x\in K^{\t} \text{ with infinitely many codings} \}
    \end{equation}
    is dense in $\widetilde{U}$. Moreover, for every $\t_0\in \widetilde{U}$ and every open set $V\subset U$ containing $\t_0$, 
    \[
        \dimh (T\cap V) \geq d-1+\dimh K^{\t_0}.
    \]
    Consequently, 
    \begin{equation}\label{eq:dimofdense2}
        \dimh T \geq d-1+\sup_{\t\in\widetilde{U}} \dimh K^{\t}.
    \end{equation}
\end{theorem}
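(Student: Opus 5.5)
The argument rests on the periodic-recurrence mechanism behind Theorem~\ref{thm:fipimplybd}. If, for some non-empty finite word $\i$, the fixed point of $\varphi^{\t}_{\i}$ lies in $\varphi^{\t}_a(K^{\t})\cap\varphi^{\t}_b(K^{\t})$ for some $a\neq b$, then $\t\in T$. Indeed, writing $x=\Pi^{\t}(\i^\infty)$, each $\varphi^{\t}_{\i^n}(x)=x$ has the codings $\i^n\i^\infty$ and $\i^n\j$, where $\j$ starts with $a$ or $b$, and these are pairwise distinct. This needs no FIP, since that direction of Lemma~\ref{lem:syminfiniteslice} is purely symbolic.

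I would therefore define, for finite words $\i$ and sequences $\bj\in\Sigma$ whose first symbol differs from that of $\i^\infty$, the function
\[
  f_{\i,\bj}(\t)=\Pi^{\t}(\i^\infty)-\Pi^{\t}(\bj).
\]
Every zero of $f_{\i,\bj}$ lies in $T$. Transversality (Definition~\ref{de:trans}) says that when $\bi,\bj$ have different first symbols, $|\Pi^{\t}(\bi)-\Pi^{\t}(\bj)|$ small forces $|\nabla_{\t}(\Pi^{\t}(\bi)-\Pi^{\t}(\bj))|$ to be bounded below. So each zero set $Z_{\i,\bj}=f^{-1}_{\i,\bj}(0)$ is locally a $C^1$ hypersurface, and $f_{\i,\bj}$ has a quantitative implicit-function structure near its zeros, uniformly in the words.

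For density, fix $\t_0\in\widetilde U$ and a neighbourhood $V$. Since SSC fails at $\t_0$, there are $a\neq b$ and $\bj=b\cdots$, $\bj'=a\cdots$ with $\Pi^{\t_0}(\bj')=\Pi^{\t_0}(\bj)$. Replace $\bj'$ by the periodic word $\i=(\bj'|_n)$ with $n$ large. Then $|\Pi^{\t}(\i^\infty)-\Pi^{\t}(\bj')|\lesssim r_{\max}^n$ uniformly in $\t$ near $\t_0$, so $|f_{\i,\bj}(\t_0)|\lesssim r_{\max}^n$. By transversality the gradient of $f_{\i,\bj}$ near $\t_0$ is at least a constant $c$, so moving $\t$ by $O(r_{\max}^n/c)$ in the gradient direction produces a zero inside $V$. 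This needs uniform $C^1$ control of $\t\mapsto\Pi^{\t}(\bi)$, which follows from the $C^1$ hypothesis and geometric contraction.

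For the dimension bound, I would keep $\bj$ free. Fix $n$ and the word $\i$ as above. For each $\bj$ in a large set near $\bj_0$ (all sequences beginning with $b$ whose $\Pi^{\t_0}$-images lie near $\Pi^{\t_0}(\i^\infty)$), the hypersurface $Z_{\i,\bj}$ meets $V$. I then consider
\[
  F(\t)=\Pi^{\t}(\i^\infty),\qquad E_{\t}=\Pi^{\t}(\Sigma_b),
\]
where $\Sigma_b$ is the cylinder $[b]$, so that
\[
  T\cap V\supset\{\t\in V: F(\t)\in E_{\t}\}.
\]
To bound this set from below I use a slicing argument. Along lines $\ell$ in the gradient direction of $f$, which is uniformly transversal, the map $\t\mapsto\Pi^{\t}(\i^\infty)-\Pi^{\t}(\bj)$ is bi-Lipschitz with Lipschitz constants uniform in $\bj$. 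The set $\{\t\in\ell: F(\t)\in E_{\t}\}$ is then the image of $\Sigma_b\cap\{\bj:\text{zero on }\ell\}$ under a map comparable to $\bj\mapsto\Pi^{\t_0}(\bj)$. Hence its dimension is at least about $\dimh\Pi^{\t_0}([b])-\epsilon=\dimh K^{\t_0}-\epsilon$, using that $\t\mapsto\Pi^{\t}$ is close to $\Pi^{\t_0}$ in a Hölder sense with exponent near $1$ on small neighbourhoods.

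The technical core is making this rigorous. I would take a Frostman measure on $\Pi^{\t_0}(\Sigma_b)$ (or a Bernoulli measure $\mu$ on $[b]$ with $\dimh\Pi^{\t_0}_*\mu$ close to $\dimh K^{\t_0}$) and push it onto each slice via the root map $\bj\mapsto\t(\bj)\in\ell$. I would check the energy estimate
\[
  |\t(\bj)-\t(\bj')|\gtrsim|\Pi^{\t_0}(\bj)-\Pi^{\t_0}(\bj')|^{1+\epsilon},
\]
which is where transversality and shrinking $V$ enter. Integrating over the $(d-1)$-parameter family of parallel lines through $V$, via Marstrand-type slicing or a product Frostman measure, gives $\dimh(T\cap V)\ge d-1+\dimh K^{\t_0}-\epsilon$. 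Letting $\epsilon\to0$ gives the bound, and \eqref{eq:dimofdense2} follows by taking the supremum over $\t_0$.

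The main obstacle is the uniform Hölder comparison of $\Pi^{\t}$ with $\Pi^{\t_0}$ needed in the energy estimate. I would handle it by restricting to a small neighbourhood of $\t_0$, where contraction ratios vary by at most a factor $1+\epsilon$.
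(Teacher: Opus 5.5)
Your density argument is essentially the paper's. You approximate one of the two coinciding codings by the periodic word $(\bj'|_n)^\infty$, find a nearby zero using $C^1$ closeness and transversality, and observe that the fixed point of $\varphi^{\t}_{\bj'|_n}$ then lies in the critical set.

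The overall shape of your dimension argument also matches the paper: fix the periodic word, vary the other coding, obtain a family of zero hypersurfaces, put a Frostman measure on the index set, and add $d-1$. However, the step that carries all the weight, the separation/energy estimate, is not justified, and the tool you propose for it is false in this setting.

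By the computation in Lemma~\ref{lem:zeroseparation}, $|\t(\bj)-\t(\bj')|\gtrsim|\Pi^{\t}(\bj)-\Pi^{\t}(\bj')|$ with $\t=\t(\bj)$. So your estimate amounts to a lower bound on $|\Pi^{\t}(\bj)-\Pi^{\t}(\bj')|$ that is \emph{uniform in $\t$} near $\t_0$. On $[b]$ no such bound exists when $\t_0\in\widetilde U$:
\begin{itemize}
\item Transversality produces pairs $\bm a,\bm a'$ with distinct first symbols that coincide at parameters $\t_1$ arbitrarily close to $\t_0$, while $|\Pi^{\t_0}(\bm a)-\Pi^{\t_0}(\bm a')|\approx|\t_1-\t_0|>0$.
\item Prefixing a common finite word $\bm w$ beginning with $b$ gives $\bj=\bm w\bm a$, $\bj'=\bm w\bm a'$ in $[b]$ with $\Pi^{\t_1}(\bj)=\Pi^{\t_1}(\bj')$ but $\Pi^{\t_0}(\bj)\neq\Pi^{\t_0}(\bj')$.
\item Hence no inequality $|\Pi^{\t}(\bj)-\Pi^{\t}(\bj')|\gtrsim|\Pi^{\t_0}(\bj)-\Pi^{\t_0}(\bj')|^{1+\epsilon}$ holds uniformly on any neighbourhood of $\t_0$. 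Moreover, $\Pi^{\t}\circ(\Pi^{\t_0})^{-1}$ is not even well defined, since $\Pi^{\t_0}$ is not injective.
\end{itemize}
Shrinking the neighbourhood so that the ratios vary by a factor $1+\epsilon$ does not help. The defect comes from overlapping cylinders moving relative to each other at unit speed in $\t$, not from ratio variation. For the same reason, a Frostman measure on $\Pi^{\t_0}([b])$ controls the geometry only at the single parameter $\t_0$, whereas your roots $\t(\bj)$ sit at other parameters.

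The missing idea is to let the varying coding range only over a set on which separation is stable under perturbation. Take $\bj=\bj_0|_k\omega$ with $\omega$ in the symbolic space $\Gamma^{\mathbb N}$ of an SSC subsystem $\{\varphi^{\t_0}_{\bm w}:\bm w\in\Gamma\}$ of similarity dimension at least $\dimh K^{\t_0}-\epsilon$ (Lemma~\ref{lem:sscsubsystem}). Since the SSC is open in $\t$, you get $|\Pi^{\t}(\omega)-\Pi^{\t}(\eta)|\geq C_1\rho_*(\omega,\eta)$ uniformly near $\t_0$, hence $\dist(E_\omega,E_\eta)\gtrsim_k\rho_*(\omega,\eta)$. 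The Bernoulli measure with weights $\underline r_{\bm w}^{s_*}$ is then $s_*$-Frostman for $\rho_*$. This is exactly what Proposition~\ref{prop:dimensionstate} and Lemma~\ref{lem:hypersurface-family} carry out.

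Restricting to the deep cylinder $[\bj_0|_k]$ also repairs a second problem in your plan. If $\bj$ merely satisfies that $\Pi^{\t_0}(\bj)$ is near $\Pi^{\t_0}(\i^\infty)$, transversality bounds $|\nabla f_{\i,\bj}|$ from below but not its direction. A fixed family of parallel lines then need not cross all the zero sets transversally. On $[\bj_0|_k]$, Lemma~\ref{lem:uniformc1} gives uniform $C^1$ convergence, hence a common direction.

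With these two changes your slicing route also works, via the Eilenberg-type inequality $\int^*\h^{s'}(A\cap\ell_z)\,\mathrm{d}z\lesssim\h^{s'+d-1}(A)$. You could even keep $\t_0$ as the base parameter by letting $n,k\to\infty$ together, avoiding the paper's detour through $\t_p$ and lower semicontinuity.
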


The density conclusion cannot in general be strengthened to residuality. Indeed, by adding a transversal translation perturbation to the Kamalutdinov--Tetenov example, we obtain a family on which $T$ is meagre in $\widetilde{U}$; see Section~\ref{subsec:baireexa} for details.

\begin{remark}
    Given a fixed self-similar IFS $\Phi=\{\varphi_i\}_{i=1}^m$, the associated \emph{translation family} is $\Phi^{\t}=\{x\mapsto\varphi_i(x)+t_i\}_{i=1}^m$, where $\t=(t_1,\ldots,t_m)$ ranges over a non-degenerate open ball $U\subset \mathbb{R}^{m}$. These systems are natural \emph{translational perturbations} of $\Phi$. Let
    \[
        \rho(\Phi) := \max\limits_{i\neq j} (r_i+r_j),
    \] 
    where $r_i$ denotes the contraction ratio of $\varphi_i$. 
    It was proved in~\cite[Lemma 2.14]{BKRS21} that $\{\Phi^{\t}\}_{\t\in U}$ satisfies the transversality condition if $\rho(\Phi)<1$. So if $0\in U$, $\rho(\Phi)<1$ and the unperturbed system $\Phi=\Phi^0$ fails the SSC, Theorem~\ref{thm:denseinffibre} produces arbitrarily small translational perturbations whose attractors contain a point with infinitely many codings.
\end{remark}

The second result concerns the size of the critical set
\[
    \mathcal{C}^{\t}:=\bigcup_{i\neq j} \varphi^{\t}_i(K^{\t}) \cap \varphi^{\t}_j(K^{\t}).
\]

\begin{theorem}\label{thm:transfam_drop}
    Let $\{\Phi^{\t}\}_{\t\in U}$ be a $C^1$ family of self-similar IFSs on $\R$ with attractor $K^{\t}$. If the family satisfies the transversality condition, then for Lebesgue almost every $\t\in U$, 
    \[
        \dimub \mathcal{C}^{\t} \leq \max\{0, 2s(\t)-1\},
    \]
    where $s(\t)$ is the similarity dimension. 
\end{theorem}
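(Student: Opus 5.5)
We bound the box dimension by counting pairs of cylinders that come close, and control that count in expectation over the parameter using transversality. Fix $\varepsilon>0$ and restrict to a compact subset of $U$ on which $s(\t)$ varies by at most $\varepsilon$. Such restrictions suffice by a countable covering of $U$. We also assume, as usual, that the contraction ratios satisfy $\lambda_{\min}\le|\lambda_i(\t)|\le\lambda_{\max}<1$ there.

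\textbf{Covering the critical set.} For $\rho>0$, let $\Lambda^*_\rho$ be the stopping-time set of words $\i$ with $|\lambda_{\i}(\t)|\approx\rho$. The ratios vary with $\t$, but on our compact set we can use a uniform comparable stopping rule and absorb the constants. Every point of $\varphi_i^{\t}(K^{\t})\cap\varphi_j^{\t}(K^{\t})$, $i\ne j$, lies in a cylinder $\varphi^{\t}_{i\i}(K^{\t})$ which meets a cylinder $\varphi^{\t}_{j\j}(K^{\t})$, with $i\i,j\j\in\Lambda^*_\rho$. In particular
\[
|\Pi^{\t}((i\i)^\infty)-\Pi^{\t}((j\j)^\infty)|\le C\rho,
\]
where $\Pi^{\t}$ is the coding map. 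Hence $N_\rho(\mathcal{C}^{\t})\le C\,\#\mathcal{P}_\rho(\t)$, where $\mathcal{P}_\rho(\t)$ is the set of such close pairs.

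\textbf{Expected number of close pairs.} By the transversality condition (Definition~\ref{de:trans}), for any $\bi,\bj\in\Sigma$ with $\bi_1\ne\bj_1$,
\[
\mathcal{L}^d\{\t:|\Pi^{\t}(\bi)-\Pi^{\t}(\bj)|\le r\}\le C r.
\]
The point is that the first symbols differ, so the distance between the two projections has nonvanishing derivative in $\t$ wherever it is small. The number of pairs in $\Lambda^*_\rho\times\Lambda^*_\rho$ is about $\rho^{-2s}$, up to $\rho^{-O(\varepsilon)}$ from the variation of $s$. Therefore
\[
\int \#\mathcal{P}_\rho(\t)\,d\t\lesssim \rho^{-2s-\varepsilon}\cdot\rho=\rho^{1-2s-\varepsilon}.
\]
We also use the trivial bound $N_\rho(\mathcal{C}^{\t})\le N_\rho(K^{\t})\lesssim\rho^{-s-\varepsilon}$. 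That bound only matters when $2s<1$, where the expectation already decays.

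\textbf{Borel--Cantelli along $\rho=2^{-n}$.} Let $\gamma=\max\{0,2s_{\max}-1\}+3\varepsilon$, where $s_{\max}$ is the maximum of $s$ on the compact piece. By Markov's inequality,
\[
\mathcal{L}^d\{\t:\#\mathcal{P}_{2^{-n}}(\t)>2^{n\gamma}\}\le 2^{-n(\gamma+1-2s-\varepsilon)}\le 2^{-n\varepsilon},
\]
and this is summable in $n$. So for a.e.\ $\t$ and all large $n$ we have $N_{2^{-n}}(\mathcal{C}^{\t})\lesssim 2^{n\gamma}$. The case $2s<1$ is covered as well: the count is eventually at most $2^{3\varepsilon n}$, which gives dimension at most $3\varepsilon$. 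Hence $\dimub\mathcal{C}^{\t}\le\gamma$. Letting $\varepsilon\to0$ along a countable sequence and refining the compact pieces so that $s_{\max}-s(\t)$ is small gives the bound with $s(\t)$.

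\textbf{Main obstacle.} The main obstacle is the transversality measure estimate for pairs with different first symbols, specifically making it uniform for the $\t$-dependent stopping sets $\Lambda^*_\rho$. I would first try to fix the words combinatorially using the uniform bounds on the ratios. Then I would apply the transversality condition to the infinite codings $(i\i)^\infty$ and $(j\j)^\infty$. Since these are points of $\Sigma$ with distinct first symbols, the standard estimate in~\cite{BSS23} applies directly to them.
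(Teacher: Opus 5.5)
Your proposal is correct and follows essentially the same route as the paper: a parameter-independent stopping-time family of words (the paper takes the sup-ratios $b_i=\sup_{V}|\lambda_i|$ with associated exponent $s_V$), the transversality measure bound for each pair of cylinders with distinct first symbols, integration of the pair count, and Markov plus Borel--Cantelli along $\delta=2^{-n}$, followed by shrinking $V$. The one point to tighten is that your localisation should make the contraction ratios nearly constant on each piece, not just $s$: the size of the uniform stopping set is governed by $s_V$, which approaches the pointwise similarity dimension only as the piece shrinks. Your final refinement step already takes care of this, and the trivial bound via $N_\rho(K)$ is not needed.
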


In particular, if $\sup_{\t\in U} s(\t)<1$ then the upper bound in Theorem~\ref{thm:transfam_drop} is strictly smaller than $s(\t)$. Hence at almost every parameter at which no dimension drop occurs, the natural self-similar measure assigns full mass to the set of uniquely coded points. 

There are related generic sparsity results for transversal systems. Rams~\cite{Ram05} proved that, for typical parameters, intersections of cylinders are nearly minimal at many scales. His estimates yield, among other consequences, positivity of packing measure in the subcritical regime, a result previously obtained by Peres, Solomyak and the first author~\cite{PSS00}. Theorem~\ref{thm:transfam_drop} addresses a different aspect of the overlap: it directly bounds the dimension of the critical set $\mathcal{C}^{\t}$. 



\subsection{Organization of the paper}

Section~\ref{sec:codingmul} develops the recurrence theory of coding for systems with finite critical sets and proves Theorems~\ref{thm:fipimplybd} and~\ref{thm:fipnewway}. Section~\ref{sec:dimoffibresandc} studies the dimensional and measure structure of coding fibres and critical sets, including Proposition~\ref{prop:fibrebound}, Proposition~\ref{exa:escwithnddcri} and Proposition~\ref{thm:singularpieces}. Section~\ref{sec:transversalfam} proves the parametric results in Theorems~\ref{thm:denseinffibre} and~\ref{thm:transfam_drop}. The final section presents several open questions.

\smallskip
\noindent{\bf Notation}. Throughout the paper we write $A \lesssim B$ to mean $A \le C B$ for a constant $C>0$ independent of the quantities under consideration, and $A \lesssim_V B$ if the implied constant may depend on the parameter(s) $V$. If both $A \lesssim B$ and $B \lesssim A$ hold we write $A \approx B$, and similarly $A \approx_V B$. We denote by $\dimh$ the Hausdorff dimension, by $\dimub$ the upper box-counting dimension, by $\dim_{\textup{A}}$ the Assouad dimension, and by $\mathcal H^s$ the $s$-dimensional Hausdorff measure. The open ball of radius $r$ centred at $x$ is $B(x,r)$. Unless otherwise stated, the logarithms are to base 2.

\section{Coding multiplicity and recurrence}
\label{sec:codingmul}

Throughout Sections~\ref{sec:codingmul} and~\ref{sec:dimoffibresandc}, we fix a self-similar IFS $\Phi=\{\varphi_i\}_{i\in\Lambda}$ on $\R$ with attractor $K$ and similarity dimension $s$. Write 
\[
    \varphi_i(x) = \lambda_i x+a_i, \quad 0<r_i:=|\lambda_i|<1.
\]
So $r_i$ is the contraction ratio of $\varphi_i$. Without loss of generality, assume that $K\subset B(0,1)$.

\subsection{Symbolic preliminaries}
Let $\Lambda^*:=\bigcup_{n=0}^\infty\Lambda^n$ be the collection of finite words over $\Lambda$, where $\Lambda^0:=\{\emptyset\}$. 
For $\i=i_1\cdots i_p\in\Lambda^*$, write 
\[
    \lambda_{\i}:=\lambda_{i_1}\cdots\lambda_{i_p}, \, r_{\i}:=|\lambda_{\i}|, \, \varphi_{\i} := \varphi_{i_1}\circ\cdots\circ\varphi_{i_p}, \, K_{\i}:=\varphi_{\i}(K),
\]
with the conventions $\varphi_{\emptyset}=\textup{id}$ and $\lambda_\emptyset=1$. We denote by $[\i]$ the cylinder set consisting of all infinite words with prefix $\i$. The symbolic space is $\Sigma:=\Lambda^{\mathbb{N}}$ and the coding map $\Pi\colon\Sigma\to K$ is defined by 
\[
    \Pi(\bi) = \lim_{n\to\infty} \varphi_{i_1}\circ\cdots\circ\varphi_{i_n}(0), \quad \bi=i_1i_2\cdots\in\Sigma.
\]
Equip $\Sigma$ with the standard ultrametric 
\[
    \rho(\omega,\tau) = \left\{\begin{array}{ll} r_{\omega\wedge\tau}, & \text{if } \omega\neq\tau, \\ 0, & \text{if } \omega=\tau, \end{array}\right.
\]
where $\omega\wedge\tau$ is the longest common prefix of $\omega$ and $\tau$. Unless otherwise stated, all dimensions of subsets of $\Sigma$ below are taken with respect to $\rho$. Let $\sigma$ be the left shift and let $\mathcal{M}_\sigma(\Sigma)$ denote the space of $\sigma$-invariant Borel probability measures on $\Sigma$. 

For $0<\delta<1$, define the stopping set
\begin{equation}\label{eq:lambdan}
    \Lambda_\delta := \{\i\in\Lambda^*\setminus\{\emptyset\}: r_{\i}\leq \delta <r_{\i^-}\},
\end{equation}
where $\i^-$ is obtained by deleting the last symbol of $\i$. Letting $\underline{r}:=\min_{i\in\Lambda} r_i$, we have for every $\i\in\Lambda_\delta$ the tight bounds $\underline{r}\delta<r_{\i}\leq\delta$. Moreover, every infinite word has a unique prefix in $\Lambda_\delta$. We also write $\bi|_n$ for the prefix of $\bi\in\Sigma$ of length $n$.

Given a probability vector $\mathbf{p}=(p_i)_{i\in\Lambda}$ with $p_i>0$, let $\mu_{\mathbf{p}}$ be the Bernoulli measure on $\Sigma$ with weights $p_i$, and let $\nu_{\mathbf{p}}:=\Pi\mu_{\mathbf{p}}$ be the associated self-similar measure. Its \emph{similarity dimension} is 
\begin{equation}\label{eq:simdimofselfsimmea}
    \dims \nu_{\mathbf{p}} := \frac{\sum_{i\in\Lambda} p_i\log p_i}{\sum_{i\in\Lambda} p_i\log r_i}.
\end{equation}
For the choice $p_i=r_i^s$, we simply write $\mu=\mu_{\mathbf{p}}$ and $\nu=\nu_{\mathbf{p}}$. So $\nu$ is the \emph{natural} self-similar measure. Note that $\mu([\i])=r_{\i}^s$ for every $\i\in\Lambda^*$ and $\dims\nu=s$.

\subsection{Recurrence and infinite fibres}

Before proving Theorem~\ref{thm:fipimplybd}, we give a symbolic criterion for the existence of an infinite fibre. Let $\mathcal{C}:=\bigcup_{i\neq j}K_i\cap K_j$ and let $\widetilde{C}:=\Pi^{-1}(\mathcal{C})$. Since $\mathcal{C}$ is closed in $K$ and $\Pi$ is continuous, $\widetilde{C}$ is closed in $(\Sigma,\rho)$. 

\begin{lemma}\label{lem:syminfiniteslice}
    There exists a point $x\in K$ such that $\#\Pi^{-1}(x)=\infty$ if and only if there exist $\omega\in\Sigma$ and integers $n_k\nearrow\infty$ such that $\sigma^{n_k}(\omega)\in \widetilde{C}$ for all $k\geq 1$.
\end{lemma}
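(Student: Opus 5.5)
We prove the two implications separately. Both rest on the observation behind \eqref{eq:branchingorbit}: for $\omega\in\Sigma$ and $n\geq 0$, we have $\sigma^n(\omega)\in\widetilde C$ exactly when the point $y=\Pi(\sigma^n\omega)$ has a coding $\tau$ with $\tau_1\neq\omega_{n+1}$.

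\emph{Forward direction.} Suppose $\#\Pi^{-1}(x)=\infty$. Consider the set $F=\Pi^{-1}(x)$ as a subset of the tree $\Lambda^*$, namely the set of all prefixes of elements of $F$. This subtree is finitely branching and has infinitely many ends. By König's lemma, or compactness of $\Sigma$, some $\omega\in F$ is an accumulation point of $F\setminus\{\omega\}$. Hence there are $\tau^{(k)}\in F$ with $\tau^{(k)}\neq\omega$ and common prefix lengths $n_k:=|\omega\wedge\tau^{(k)}|\to\infty$. Passing to a subsequence, we may take the $n_k$ strictly increasing. Since $\omega$ and $\tau^{(k)}$ agree on the first $n_k$ symbols, we have $\varphi_{\omega|_{n_k}}(\Pi\sigma^{n_k}\omega)=x=\varphi_{\omega|_{n_k}}(\Pi\sigma^{n_k}\tau^{(k)})$. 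Injectivity of $\varphi_{\omega|_{n_k}}$ gives $\Pi\sigma^{n_k}\omega=\Pi\sigma^{n_k}\tau^{(k)}$. These two codings have different first symbols, so this common point lies in $K_{\omega_{n_k+1}}\cap K_{\tau^{(k)}_{n_k+1}}\subset\mathcal C$. Therefore $\sigma^{n_k}\omega\in\widetilde C$ for every $k$.

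\emph{Reverse direction.} Suppose $\sigma^{n_k}\omega\in\widetilde C$ with $n_k\nearrow\infty$, and set $x=\Pi(\omega)$. For each $k$, the point $y_k=\Pi(\sigma^{n_k}\omega)$ lies in $K_i\cap K_j$ for some $i\neq j$. At least one of $i,j$ differs from $\omega_{n_k+1}$, so $y_k$ has a coding $\eta^{(k)}$ whose first symbol differs from $\omega_{n_k+1}$. Define $\tau^{(k)}:=\omega|_{n_k}\eta^{(k)}$. Then $\Pi(\tau^{(k)})=\varphi_{\omega|_{n_k}}(y_k)=x$, and $|\omega\wedge\tau^{(k)}|=n_k$ exactly. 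Since the $n_k$ are distinct, the words $\tau^{(k)}$ are pairwise distinct: for $k<l$, $\tau^{(l)}$ agrees with $\omega$ at position $n_k+1$, while $\tau^{(k)}$ does not. Thus $\Pi^{-1}(x)$ is infinite.

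No step is a real obstacle. The only points needing care are extracting an accumulation point within the fibre in the forward direction, which uses compactness of $\Sigma$ (infinite subsets have accumulation points, and these can be realised by elements of $F$ via the tree argument, or simply because $F$ is closed), and the bookkeeping of first-differing symbols in the reverse direction.
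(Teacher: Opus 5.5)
Your proposal is correct and follows essentially the same argument as the paper. In the forward direction you take an accumulation point of the closed, hence compact, fibre and strip the common prefixes. In the reverse direction you prepend $\omega|_{n_k}$ to a coding of $\Pi(\sigma^{n_k}\omega)$ whose first symbol differs from $\omega_{n_k+1}$, and you get pairwise distinctness from the position $n_k+1$.
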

\begin{proof}
    Assume that $\#\Pi^{-1}(x)=\infty$. By compactness, the fibre has an accumulation point $\omega$. We may choose distinct $\bi_k=i_{k,1}i_{k,2}\cdots\in \Pi^{-1}(x)\setminus\{\omega\}$ such that the length of $\omega\wedge\bi_k$, denoted by $n_k$, goes to infinity. Passing to a subsequence if necessary, we may assume that $n_k$ is strictly increasing. Clearly,
    \begin{align*}
        \varphi_{\omega\wedge\bi_k}\bigl(\Pi(\sigma^{n_k}\omega)\bigr) = \Pi(\omega)= x = \Pi(\bi_k) = \varphi_{\omega\wedge\bi_k}\bigl(\Pi(\sigma^{n_k}\bi_k)\bigr),
    \end{align*}
    and hence \(\Pi(\sigma^{n_k}\omega)=\Pi(\sigma^{n_k}\bi_k)\). Since $\sigma^{n_k}\omega$ and $\sigma^{n_k}\bi_k$ begin with distinct symbols but have the same projection, $\sigma^{n_k}(\omega)\in \widetilde{C}$.

    Conversely, if such $\omega\in\Sigma$ and $\{n_k\}_k$ exist, then for each $k$ there exists $\bj_k=j_{k,1}j_{k,2}\cdots\in\Sigma$ with $\Pi(\bj_k)=\Pi(\sigma^{n_k}\omega)$ and $j_{k,1}\neq \omega_{n_k+1}$. Setting $x:=\Pi(\omega)$, we have
    \[
        \Pi(\omega|_{n_k}\bj_k) = \varphi_{\omega|_{n_k}}(\Pi(\sigma^{n_k}\omega)) = \Pi(\omega) = x.
    \]
    So $\omega|_{n_k}\bj_k\in\Pi^{-1}(x)$. Moreover, $\omega|_{n_k}\bj_k$ differs from $\omega$ at position $n_k+1$. The sequences $\omega|_{n_k}\bj_k$ are pairwise distinct: if $k<\ell$, then $\omega|_{n_\ell}\bj_\ell$ agrees with $\omega$ through position $n_\ell$, and hence at position $n_k+1$, whereas $\omega|_{n_k}\bj_k$ does not. Therefore, $\Pi^{-1}(x)$ contains infinitely many distinct words.
\end{proof}

Thus an infinite fibre exists precisely when the shift orbit of some coding visits the symbolic critical set $\widetilde{C}$ infinitely often. Poincar\'e recurrence then gives a useful measure-theoretic sufficient condition.

\begin{lemma}[Poincar\'e recurrence]
    If there exists $m\in\mathcal{M}_\sigma(\Sigma)$ such that $m(\widetilde{C})>0$, then $m$-almost every word of $\widetilde{C}$ returns to $\widetilde{C}$ infinitely often under $\sigma$.
\end{lemma}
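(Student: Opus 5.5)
This statement is an instance of the classical Poincaré recurrence theorem applied to the measure-preserving system $(\Sigma,\sigma,m)$ and the set $\widetilde{C}$. $\widetilde{C}$ is closed, hence Borel, so $m(\widetilde{C})$ makes sense. I would give the standard short argument rather than merely cite it.

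First I show that almost every point of $\widetilde{C}$ returns at least once. Let
\[
    B:=\{\omega\in\widetilde{C}: \sigma^n\omega\notin\widetilde{C} \text{ for all } n\geq 1\} = \widetilde{C}\setminus\bigcup_{n\geq1}\sigma^{-n}\widetilde{C},
\]
which is Borel. The preimages $\sigma^{-k}B$, $k\geq0$, are pairwise disjoint. Indeed, if $\omega\in\sigma^{-k}B\cap\sigma^{-\ell}B$ with $k<\ell$, then $\sigma^k\omega\in B$, while $\sigma^{\ell-k}(\sigma^k\omega)=\sigma^\ell\omega\in B\subset\widetilde{C}$, which contradicts the definition of $B$. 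Invariance gives $m(\sigma^{-k}B)=m(B)$ for every $k$. Since $m$ is a probability measure, summing over $k$ forces $m(B)=0$.

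Next I upgrade this to infinitely many returns. Let
\[
    D:=\{\omega\in\widetilde{C}: \sigma^n\omega\in\widetilde{C} \text{ for only finitely many } n\geq1\}.
\]
If $\omega\in D$ returns at all, its last return time $N\geq1$ satisfies $\sigma^N\omega\in B$. If $\omega$ never returns, then $\omega\in B$. Hence
\[
    D\subset\bigcup_{N\geq0}\sigma^{-N}B,
\]
and each set on the right is $m$-null. So $m(D)=0$, which is the claim.

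The argument has no real obstacle. The only points needing care are the measurability of $B$ and $D$, which follows because $\widetilde{C}$ is closed and $\sigma$ is continuous, and the correct use of invariance as $m(\sigma^{-k}B)=m(B)$. Combined with Lemma~\ref{lem:syminfiniteslice}, the statement then shows that any invariant measure charging $\widetilde{C}$ produces an infinite fibre.
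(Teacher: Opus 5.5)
Your proof is correct and is the standard wandering-set proof of the Poincar\'e recurrence theorem. The paper gives no separate argument here: it invokes that classical theorem directly for the $\sigma$-invariant measure $m$ and the closed (hence Borel) set $\widetilde{C}$, and, like your closing remark, uses it together with Lemma~\ref{lem:syminfiniteslice} to produce an infinite fibre.
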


A simple way to obtain such a measure is when $\widetilde{C}$ contains a non-empty closed subset $A$ with $\sigma^p(A)\subset A$ for some $p\geq 1$. In this case, $\sigma^p|_A$ is a continuous self-mapping of the compact space $(A,\rho)$. By the Krylov-Bogolyubov theorem, $\sigma^p$ admits an invariant Borel probability measure $m$ supported on $A$. Then $\tilde{m}:=\frac{1}{p}\sum_{k=0}^{p-1} \sigma^k m$ is $\sigma$-invariant with $\tilde{m}(A) \geq m(A)/p>0$.

\begin{example}\label{exa:periodictype}
    Suppose that, for some non-empty word $\i\in\Lambda^*$, the fixed point $x_{\i}$ of $\varphi_{\i}$ belongs to $\mathcal{C}$. Then $\i^\infty:=\i\i\cdots\in \widetilde{C}$ returns to $\widetilde{C}$ periodically. Equivalently, choose $\bj\neq\i^\infty$ with $\Pi(\bj)=x_{\i}$. Since $\varphi_{\i}(x_{\i})=x_{\i}$,
    \[
        x_{\i} = \Pi(\bj) = \Pi(\i\bj) = \Pi(\i\i\bj) = \cdots,
    \]
    and these codings are pairwise distinct. Thus $x_{\i}$ has infinitely many codings.
\end{example}

Under the FIP, periodic recurrence turns out to be the \emph{only} way to produce unbounded coding multiplicity. To see this, it is convenient to record the following elementary lemmas.

\begin{lemma}\label{lem:tree}
    Let $T$ be a rooted tree in which every vertex has at least one child and at most $M\geq 1$ children. If every infinite path in $T$ has at most $N$ branching vertices (that is, vertices which have at least two children), then $T$ has at most $M^N$ infinite paths.
\end{lemma}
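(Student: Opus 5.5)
We argue by induction on $N$. The claim to prove is: if every vertex has between $1$ and $M$ children and every infinite path contains at most $N$ branching vertices, then $T$ has at most $M^N$ infinite paths. Note first that, since every vertex has at least one child, every finite path from the root extends to an infinite path. So every vertex lies on some infinite path, and the number of infinite paths through a vertex $v$ is the number of infinite paths of the subtree $T_v$ rooted at $v$. This subtree again satisfies the hypotheses with the same $M$.

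\emph{Base case $N=0$.} No infinite path contains a branching vertex. Since every vertex lies on an infinite path, no vertex is branching at all. Hence every vertex has exactly one child, $T$ is a single ray, and it has exactly $1=M^0$ infinite path.

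\emph{Inductive step.} Assume the claim for $N-1$ and let $T$ satisfy the hypothesis with bound $N\ge1$. Starting at the root, follow the unique child as long as the current vertex is non-branching.

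\begin{itemize}
\item If this never meets a branching vertex, $T$ is a single ray and has $1\le M^N$ infinite paths.
\item Otherwise we reach a first branching vertex $v$. Every infinite path of $T$ passes through $v$, because all vertices before $v$ have exactly one child. Let $c_1,\dots,c_k$ be the children of $v$, with $2\le k\le M$.
\end{itemize}

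Now take any infinite path $\gamma$ in a subtree $T_{c_j}$. Prepend the segment from the root to $c_j$. This gives an infinite path of $T$ containing the branching vertex $v$, which is not in $T_{c_j}$. That extended path has at most $N$ branching vertices, so $\gamma$ has at most $N-1$ branching vertices. Here branching in $T_{c_j}$ coincides with branching in $T$, since children are unchanged. By the induction hypothesis each $T_{c_j}$ has at most $M^{N-1}$ infinite paths.

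The infinite paths of $T$ correspond bijectively to the disjoint union of the infinite paths of the $T_{c_j}$. Hence $T$ has at most $k\,M^{N-1}\le M^N$ infinite paths, which completes the induction.

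The only point needing care is the reduction to subtrees: we must use the "at least one child" hypothesis so that every vertex lies on an infinite path. This excludes branching vertices sitting on finite dead-end branches, which the path condition would not see. The rest is routine.
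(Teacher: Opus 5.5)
Your proof is correct and follows the paper's argument: induction on $N$, taking the first branching vertex $v$ on the initial chain from the root, and applying the induction hypothesis with $N-1$ to the subtrees rooted at the at most $M$ children of $v$. Your explicit remark that every vertex lies on an infinite path is what the base case needs. The paper uses this only implicitly when it asserts that no vertex is branching.
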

\begin{proof}
    We argue by induction on $N$. If $N=0$, no vertex is branching. So $T$ has exactly one infinite path.
    For $N\geq 1$, if $T$ has no branching vertex, then it again has exactly one path. Otherwise, let $v$ be a branching vertex closest to the root. The part of the tree above $v$ is a single chain, so every infinite path of $T$ passes through one of the (at most $M$) children of $v$. In each of the subtrees rooted at those children, no path can have more than $N-1$ additional branching vertices. By the induction hypothesis, each such subtree has at most $M^{N-1}$ infinite paths. Thus $T$ has at most $M\cdot M^{N-1}=M^N$ infinite paths. 
\end{proof}

\begin{lemma}\label{lem:Eisenough}
    Assume that $\mathcal{C}\neq\varnothing$. Then
    \begin{enumerate}
        \item $\sup_{x\in K} \#\Pi^{-1}(x) = \sup_{x\in \mathcal{C}} \#\Pi^{-1}(x)$;
        \item If $\i,\i'\in\Lambda^*$ are distinct and neither is a prefix of the other, $\#(K_{\i}\cap K_{\i'})\leq \# \mathcal{C}$.
    \end{enumerate} 
\end{lemma}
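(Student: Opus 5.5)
For part (1), the inequality $\sup_{x\in\mathcal C}\#\Pi^{-1}(x)\le\sup_{x\in K}\#\Pi^{-1}(x)$ is trivial, so the plan is to bound the fibre of an arbitrary $x\in K$ by the fibre of some point of $\mathcal C$. If $\#\Pi^{-1}(x)\le1$ there is nothing to show, since $\mathcal C\neq\varnothing$ and every point of $\mathcal C$ has at least two codings. Otherwise, consider all codings of $x$ and let $\bm w$ be the longest word that is a common prefix of \emph{all} of them. Such a finite word exists: two distinct codings already share only a finite prefix. By maximality, there are codings $\bm w\bj$ and $\bm w\bj'$ with $j_1\neq j_1'$. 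Set $y:=\Pi(\sigma^{|\bm w|}\bj)$. Then $y\in K_{j_1}\cap K_{j_1'}\subset\mathcal C$. The map $\bi\mapsto\bm w\bi$ is injective from $\Pi^{-1}(y)$ into $\Pi^{-1}(x)$, because $\Pi(\bm w\bi)=\varphi_{\bm w}(y)=x$. Conversely, every coding of $x$ begins with $\bm w$, and removing this prefix gives a coding of $\varphi_{\bm w}^{-1}(x)=y$. Hence $\#\Pi^{-1}(x)=\#\Pi^{-1}(y)$, which gives (1).

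For part (2), let $\bm k:=\i\wedge\i'$ be the longest common prefix of $\i$ and $\i'$. Since neither word is a prefix of the other, we can write $\i=\bm k a\bm u$ and $\i'=\bm k b\bm u'$ with $a\neq b$ single symbols. Then
\[
K_{\i}\cap K_{\i'}=\varphi_{\bm k}\bigl(\varphi_{a}(K_{\bm u})\cap\varphi_{b}(K_{\bm u'})\bigr)\subset\varphi_{\bm k}\bigl(K_a\cap K_b\bigr)\subset\varphi_{\bm k}(\mathcal C).
\]
Here we use $K_{\bm u}\subset K$ and the fact that $\varphi_{\bm k}$ is injective. Since an injective map does not increase cardinality, $\#(K_{\i}\cap K_{\i'})\le\#\mathcal C$.

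Both parts are elementary. The only point that needs care is the existence of the maximal common prefix $\bm w$ in (1), in particular when the fibre is infinite. It is handled as noted above: any two distinct codings have a finite common prefix, so the common prefix of the whole fibre is finite, and maximality yields two codings that branch right after $\bm w$.
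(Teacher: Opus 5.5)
Your proof is correct and follows the paper's argument. In (1) you strip the maximal common prefix of the whole fibre in one step, where the paper repeatedly strips the common first digit until it reaches $\mathcal{C}$; your version makes the ``eventually'' precise and handles singleton fibres explicitly. Part (2) is the paper's inclusion $K_{\i}\cap K_{\i'}\subset\varphi_{\i\wedge\i'}(\mathcal{C})$.

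One cosmetic slip: since you write the codings as $\bm w\bj$ and $\bm w\bj'$, the point should be $y:=\Pi(\bj)=\varphi_{\bm w}^{-1}(x)$, not $\Pi(\sigma^{|\bm w|}\bj)$.
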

\begin{proof}
    (1) If $x\notin\mathcal{C}$ then all codings of $x$ share the same first digit. Removing this common digit preserves the cardinality of the fibre. If a fibre has more than one element, this procedure must eventually reach $\mathcal{C}$. So the equality holds.  (2) Since neither $\i$ nor $\i'$ is a prefix of the other, $K_{\i} \cap K_{\i'} \subset \varphi_{\i\wedge\i'}(\mathcal{C})$ and the conclusion follows immediately.
\end{proof}

\begin{proof}[Proof of Theorem~\ref{thm:fipimplybd} (the equivalence)]
    Let $N:=\#\mathcal{C}<\infty$. By Lemma~\ref{lem:Eisenough}, we immediately see that (1) $\Longrightarrow$ (2). The implication (3) $\Longrightarrow$ (1) follows from Example~\ref{exa:periodictype}, so it suffices to check (2) $\Longrightarrow$ (3).
    
    Choose $x\in K$ with $\#\Pi^{-1}(x)=\infty$. Consider the rooted tree $T_x:=\{\i\in\Lambda^*: x\in K_{\i}\}$, with an edge from $\i$ to $\i j$ whenever $j\in\Lambda$ and $\i j\in T_x$. Every vertex of $T_x$ has at least one child, because $x\in K_{\i}$ implies $x\in K_{\i j}$ for some $j\in\Lambda$, and at most $\#\Lambda$ children. Since each coding of $x$ corresponds to an infinite path in $T_x$, there are more than $(\#\Lambda)^N$ infinite paths. 
    
    By Lemma~\ref{lem:tree}, some coding $\omega=\omega_1\omega_2\cdots\in\Pi^{-1}(x)$ must contain at least $N+1$ branching prefixes. Let $0\leq n_1<n_2<\cdots<n_{N+1}$ be the lengths of the first $N+1$ such prefixes. For $1\leq k\leq N+1$, define $x_k:=\varphi_{\omega|_{n_k}}^{-1}(x)$. Since $\omega|_{n_k}$ is branching in $T_x$, there exist distinct $i,j\in\Lambda$ such that 
    \[
        x\in \varphi_{\omega|_{n_k}}(K_i\cap K_j).
    \]
    So $x_k\in K_i\cap K_j \subset\mathcal{C}$ for each $k$. Since $\mathcal{C}$ has only $N$ elements, by the pigeonhole principle there exists $p<q$ such that $x_p=x_q$. Let $\i:=\omega_{n_p+1}\cdots\omega_{n_q}$. Then 
    \[
        x = \varphi_{\omega|_{n_p}}(x_p) = \varphi_{\omega|_{n_q}}(x_q) = \varphi_{\omega|_{n_p}}(\varphi_{\i}(x_q)).
    \]
    By injectivity of $\varphi_{\omega|_{n_p}}$, $x_p=\varphi_{\i}(x_q)=\varphi_{\i}(x_p)$. So $x_p$ is the fixed point of $\varphi_{\i}$ and belongs to $\mathcal{C}$.
\end{proof}

The next subsection provides an inhomogeneous example satisfying the third condition in Theorem~\ref{thm:fipimplybd}. However, as we will prove shortly, this phenomenon cannot occur for commensurable IFSs with the FIP and hence is likewise excluded in the homogeneous case.

\begin{lemma}\label{lem:atleastp}
    If $\Phi$ is commensurable, then for any $p\geq 2$, we can find $q\geq 1$ such that
    \[
        \#\{\omega\in\Lambda^*\setminus\{\i\}: \lambda_{\omega}=\lambda_{\i}\} \geq p, \quad \forall \i\in\Lambda^q.
    \]
\end{lemma}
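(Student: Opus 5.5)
The plan is to reduce everything to the exponents of a common base. By commensurability we can fix $0<r<1$ and positive integers $a_i$ (after clearing denominators) such that $r_i=r^{a_i/b}$ for all $i\in\Lambda$, with a common $b\geq 1$. Writing $\lambda_i=\epsilon_i r_i$ with $\epsilon_i\in\{\pm1\}$, every finite word $\omega$ then satisfies
\[
\lambda_\omega=\Bigl(\prod_k \epsilon_{\omega_k}\Bigr)\, r^{A(\omega)/b},\qquad A(\omega):=\sum_k a_{\omega_k}.
\]
Both factors depend only on how many times each letter occurs in $\omega$, not on the order of the letters. Hence $\lambda_\omega=\lambda_{\i}$ holds in two situations. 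The first is when $\omega$ is a rearrangement of $\i$. The second is when $\omega$ arises from $\i$ by exchanging letter multisets that preserve both the total exponent and the sign.

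The trade I will use is the following. For two distinct letters $c\neq d$, the words $c^{2a_d}$ and $d^{2a_c}$ have the same total exponent $2a_ca_d$. Both also have sign $+1$, because each is an even power. Hence $\lambda_{c^{2a_d}}=\lambda_{d^{2a_c}}$. Set $\bar a:=\max_i a_i$. I will choose
\[
q\geq \max\{p+1,\; 2p\bar a\}.
\]
Now fix $\i\in\Lambda^q$ and split into two cases.

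\emph{Case 1: $\i$ contains at least two distinct letters.} The number of distinct rearrangements of $\i$ is a multinomial coefficient with at least two nonzero parts summing to $q$. Such a coefficient is at least $\binom{q}{1}=q$. So there are at least $q-1\geq p$ words $\omega\neq\i$ of the same length with $\lambda_\omega=\lambda_{\i}$.

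\emph{Case 2: $\i=c^q$ for a single letter $c$.} Pick any $d\neq c$, which is possible since $\#\Lambda\geq2$. For $t=1,\dots,\lfloor q/(2a_d)\rfloor$, let $\omega_t:=c^{\,q-2ta_d}\,d^{\,2ta_c}$, obtained by performing the trade $t$ times. Each $\omega_t$ satisfies $\lambda_{\omega_t}=\lambda_{\i}$. The $\omega_t$ are pairwise distinct and differ from $\i$, because they contain different numbers of the letter $d$. Since $q\geq 2p\bar a\geq 2pa_d$, there are at least $p$ such $t$.

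Together the two cases give the claim. The only delicate points are the sign bookkeeping, which the even exponents $2a_d$ and $2a_c$ handle, and the fact that the competing words $\omega$ are allowed to have lengths different from $q$, which Case 2 needs. I expect no real obstacle beyond making sure that $q$ is chosen uniformly in $\i$, and the choice above achieves this.
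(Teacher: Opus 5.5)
Your proof is correct, but it is organised differently from the paper's. The paper uses a single mechanism for every word. It fixes, for each letter $i$, a partner $j_i\neq i$ and positive integers with $\lambda_i^{a_i}=\lambda_{j_i}^{b_i}$. It takes $q>pA\cdot\#\Lambda$, so that by pigeonhole some letter $i_*$ occurs at least $pA$ times in any $\i\in\Lambda^q$. It then produces $\i^{(1)},\dots,\i^{(p)}$ by deleting $ka_{i_*}$ occurrences of $i_*$ and prefixing $kb_{i_*}$ copies of $j_{i_*}$, and distinctness comes from counting occurrences of $i_*$.

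You instead observe that $\lambda_\omega$ depends only on letter counts. For any non-constant $\i$, the at least $q-1$ nontrivial rearrangements therefore already suffice, and this part uses no commensurability at all. You invoke the exponent trade only for the $\#\Lambda$ constant words $c^q$.

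Your route buys three things. It isolates exactly where commensurability enters, namely only to handle powers of a single letter. It gives a threshold $q\geq\max\{p+1,2p\bar a\}$ that does not depend on $\#\Lambda$. It also makes the sign bookkeeping explicit through the even exponents $2a_d,2a_c$. The paper's assertion $\lambda_i^{a_i}=\lambda_{j_i}^{b_i}$ leaves that point implicit: it holds only after choosing the exponents with suitable parity, for instance by doubling.

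The paper's route avoids the case split and the multinomial count. It gives one uniform substitution that produces all competitors, though typically of lengths different from $q$.
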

\begin{proof}
    Since the ratios are rational powers of a common base, for every $i\in\Lambda$ we can find $j_i\in\Lambda\setminus\{i\}$ and positive integers $a_i,b_i$ such that $\lambda_i^{a_i}=\lambda_{j_i}^{b_i}$. Write $A:=\max_{i\in\Lambda} a_i$ and take $q>pA\cdot\#\Lambda$. For any $\i=i_1\cdots i_q\in\Lambda^q$, the pigeonhole principle implies that some digit, say $i_*$, appears at least $pA\geq pa_{i_*}$ times. For $k=1,\ldots,p$, let $\i^{(k)}$ be obtained from $\i$ by deleting the first $ka_{i_*}$ occurrences of $i_*$ and then prefixing $kb_{i_*}$ copies of $j_{i_*}$. Then 
    \[
        \lambda_{\i^{(k)}} = \lambda_{j_{i_*}}^{kb_{i_*}}\cdot\lambda_{\i}\lambda_{i_*}^{-ka_{i_*}} = \lambda_{\i}.
    \]
    The words $\i^{(1)},\ldots,\i^{(p)}$ are pairwise distinct and none equals $\i$, because their numbers of occurrences of the symbol $i_*$ are pairwise distinct. This proves the lemma.
\end{proof}

\begin{corollary}
    Suppose $\Phi$ is commensurable and satisfies the FIP. Let $\i,\i'\in\Lambda^*$ be distinct with $\lambda_{\i}=\lambda_{\i'}$ and let $p\geq 2$. Let $q$ be given by Lemma~\ref{lem:atleastp} with respect to $p$. If there is $\j\in\Lambda^q$ such that $K_{\i\j}\cap K_{\i'\j}\neq\varnothing$, then $\#(K_{\i}\cap K_{\i'})\geq p$.
\end{corollary}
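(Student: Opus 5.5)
The plan is to exploit the fact that $\varphi_{\i}$ and $\varphi_{\i'}$ have the same linear part, so that they differ by a translation. Write $\lambda:=\lambda_{\i}=\lambda_{\i'}$, and write $\varphi_{\i}(z)=\lambda z+a$ and $\varphi_{\i'}(z)=\lambda z+a'$. Pick a point $x\in K_{\i\j}\cap K_{\i'\j}$. Then there are $y,y'\in K$ with $x=\varphi_{\i\j}(y)=\varphi_{\i'\j}(y')$, which unwinds to the single linear relation
\[
    a'-a+\lambda\lambda_{\j}(y'-y)=0.
\]
The key observation is that this relation involves $\j$ only through its ratio $\lambda_{\j}$. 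Therefore, for any word $\omega$ with $\lambda_\omega=\lambda_{\j}$, we get
\[
    \varphi_{\i'\omega}(y')-\varphi_{\i\omega}(y)=a'-a+\lambda\bigl(\varphi_\omega(y')-\varphi_\omega(y)\bigr)=a'-a+\lambda\lambda_\omega(y'-y)=0 .
\]
Hence $x_\omega:=\varphi_{\i\omega}(y)=\varphi_{\i'\omega}(y')$ lies in $K_{\i}\cap K_{\i'}$.

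Next I apply Lemma~\ref{lem:atleastp} with the given $p$ and $q$. Since $\j\in\Lambda^q$, it yields at least $p$ words $\omega^{(1)},\dots,\omega^{(p)}$, each different from $\j$, with $\lambda_{\omega^{(k)}}=\lambda_{\j}$. Together with $\omega=\j$ itself, this gives $p+1$ candidate points $x_{\j},x_{\omega^{(1)}},\dots,x_{\omega^{(p)}}$ in $K_{\i}\cap K_{\i'}$. It then remains to show that at least $p$ of them are distinct.

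Distinctness is where I expect the real difficulty. Two candidates coincide exactly when $\varphi_{\omega}(y)=\varphi_{\omega'}(y)$, and nothing so far prevents this. My first attempt is to exploit the explicit form of the words produced in the proof of Lemma~\ref{lem:atleastp}. Each $\omega^{(k)}$ begins with $kb_{i_*}$ copies of $j_{i_*}$, and the words have pairwise distinct numbers of occurrences of $i_*$. I would also use the freedom to replace $y$ by $y'$, or to vary the choice of $x$. A coincidence $\varphi_{\omega^{(k)}}(y)=\varphi_{\omega^{(l)}}(y)$ places $\varphi_{\omega^{(k)}}(y)$ in an intersection $K_{\omega^{(k)}}\cap K_{\omega^{(l)}}$. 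When neither word is a prefix of the other, Lemma~\ref{lem:Eisenough}(2) shows this is a point of $\varphi_{\omega^{(k)}\wedge\omega^{(l)}}(\mathcal C)$. Using the FIP, I would try to show that too many such coincidences force a repeated critical point along a single coding, and hence a periodic recurrence as in the proof of Theorem~\ref{thm:fipimplybd}. Such a recurrence would give a contradiction, or else could be absorbed by enlarging $q$, that is, by applying Lemma~\ref{lem:atleastp} with a larger $p$ so that enough distinct values survive.

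If this contradiction argument works, the conclusion $\#(K_{\i}\cap K_{\i'})\ge p$ follows immediately from the construction above. The corollary is presumably then fed into a pigeonhole argument against the finiteness of $\mathcal C$, using Lemma~\ref{lem:Eisenough}(2), which bounds $\#(K_{\i}\cap K_{\i'})\le\#\mathcal C$ whenever neither word is a prefix of the other.
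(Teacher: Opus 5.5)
Your construction of the points $x_\omega=\varphi_{\i\omega}(y)=\varphi_{\i'\omega}(y')\in K_{\i}\cap K_{\i'}$ is correct and is the paper's first step. However, there is a genuine gap at the one step that matters: you never show these points are distinct. The route you sketch for it is speculative, as you acknowledge (``if this contradiction argument works'').

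The missing observation is elementary. All the words $\omega\in\{\j,\omega^{(1)},\dots,\omega^{(p)}\}$ have the same signed ratio $\lambda_\omega=\lambda_{\j}$. So $\varphi_\omega$ and $\varphi_{\omega'}$ are affine maps with the same linear part, and if they agree at the single point $y$ they agree everywhere. In other words, a coincidence $\varphi_\omega(y)=\varphi_{\omega'}(y)$ forces an exact overlap $\varphi_\omega=\varphi_{\omega'}$. Two distinct words with equal ratio cannot be prefixes of one another, since the extra suffix would have ratio $1$. Lemma~\ref{lem:Eisenough}(2) then gives $\#K_\omega=\#(K_\omega\cap K_{\omega'})\le\#\mathcal C<\infty$. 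This is absurd, because $K_\omega$ is a bijective image of the infinite set $K$. Hence all your candidates are pairwise distinct, and you even get $p+1$ points. This is exactly how the paper concludes: an exact overlap contradicts the FIP.

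The alternative you propose would not close the gap. A single coincidence only places one point in $\varphi_{\omega\wedge\omega'}(\mathcal C)$, which the FIP allows. Nothing in your sketch bounds how many candidates can coincide, so enlarging $q$ (via a larger $p$) gains nothing. Periodic recurrence to $\mathcal C$ is also not contradictory under the FIP alone: Theorem~\ref{thm:fipnewway} exhibits it. In the commensurable setting, its impossibility is precisely the last assertion of Theorem~\ref{thm:fipimplybd}, whose proof uses this corollary, so appealing to it here would be circular.
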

\begin{proof}
    Since $K_{\i\j} \cap K_{\i'\j} \neq\varnothing$, there are $x,x'\in K$ with $\varphi_{\i\j}(x)=\varphi_{\i'\j}(x')$. For any $\omega\in\Lambda^*$ with $\lambda_{\omega}=\lambda_{\j}$, we have by $\lambda_{\i}=\lambda_{\i'}$ that
    \begin{align*}
        \varphi_{\i\omega}(x) - \varphi_{\i'\omega}(x') &= \big( \lambda_{\i\omega}x+\varphi_{\i\omega}(0) \big) - \big( \lambda_{\i'\omega}x' + \varphi_{\i'\omega}(0) \big) \\
        &= \lambda_{\i\omega}(x-x') + \varphi_{\i\omega}(0) - \varphi_{\i'\omega}(0) \\
        &= \lambda_{\i\omega}(x-x') + \varphi_{\i\j}(0) - \varphi_{\i'\j}(0) \\
        &= \varphi_{\i\j}(x) - \varphi_{\i'\j}(x') \\
        &= 0.
    \end{align*}
    In particular, $\varphi_{\i\omega}(x)\in K_{\i\omega}\cap K_{\i'\omega}$ for every such $\omega$. By Lemma~\ref{lem:atleastp}, there are distinct words $\omega_1,\ldots,\omega_p\in\Lambda^*\setminus\{\j\}$ such that $\lambda_{\omega_k}=\lambda_{\j}$ for $1\leq k\leq p$. The points $\varphi_{\i\omega_1}(x),\ldots,\varphi_{\i\omega_p}(x)$ are pairwise distinct. Otherwise, if $\varphi_{\i\omega_k}(x)=\varphi_{\i\omega_\ell}(x)$ for some $k\neq \ell$, then $\varphi_{\omega_k}(x)=\varphi_{\omega_{\ell}}(x)$ and then $\varphi_{\omega_k}=\varphi_{\omega_{\ell}}$ since $\lambda_{\omega_k}=\lambda_{\omega_\ell}$; but such an exact overlap contradicts the FIP. As a consequence, $\#(K_{\i}\cap K_{\i'})\geq p$.
\end{proof}

\begin{proof}[Proof of Theorem~\ref{thm:fipimplybd} (the commensurable case)]
    Let $r\in(0,1)$ be a common base for the contraction ratios. By commensurability, there is a constant $C>0$ such that the set $\{\lambda_{\i}r^{-n}: \i\in\Lambda_{r^n}\}$ has at most $C$ elements for every $n$. Assume for contradiction that the UBC fails. By the already proved equivalence, there is a point $x_*\in \mathcal{C}$ with infinitely many codings. Let $p:=\#\mathcal{C}+1$ and choose $q$ as in Lemma~\ref{lem:atleastp} for this $p$. Also set $M:=\#\Lambda$ and choose $N>C(M+1)^{q}$.
    
    Choose $N$ distinct codings of $x_*$. Once $n$ is sufficiently large, their prefixes in $\Lambda_{r^n}$ are pairwise distinct. So for all large $n$, there are at least $N$ distinct words $\i_1,\ldots,\i_N\in\Lambda_{r^n}$ such that $x_*\in\bigcap_{t=1}^N K_{\i_t}$. We also take $n$ so large that every $\i_t$ has length at least $q$. By the pigeonhole principle, among these $N$ words there are $(M+1)^q$ with the same value of $\lambda_{\i_t}$. For each $1\leq t\leq N$, write $\i_t=\omega_t\j_t$, where $\j_t\in\Lambda^{q}$ is its tail of length $q$. Since $\#\Lambda^{q}=M^{q}<(M+1)^q$, the pigeonhole principle again yields two indices $t_1<t_2$ with $\j_{t_1}=\j_{t_2}$ and $\lambda_{\omega_{t_1}\j_{t_1}}=\lambda_{\i_{t_1}}=\lambda_{\i_{t_2}}=\lambda_{\omega_{t_2}\j_{t_2}}$. Since $\i_{t_1}\neq \i_{t_2}$, we have $\omega_{t_1}\neq\omega_{t_2}$ while $\lambda_{\omega_{t_1}}=\lambda_{\omega_{t_2}}$. Recall that 
    \[
        x_*\in K_{\i_{t_1}}\cap K_{\i_{t_2}} = K_{\omega_{t_1}\j_{t_1}}\cap K_{\omega_{t_2}\j_{t_2}}.
    \]
    By the previous corollary, $\#(K_{\omega_{t_1}}\cap K_{\omega_{t_2}}) \geq p = \#\mathcal{C}+1$. Since $\omega_{t_1}\neq\omega_{t_2}$ but $\lambda_{\omega_{t_1}}=\lambda_{\omega_{t_2}}$, neither word is a prefix of the other. Lemma~\ref{lem:Eisenough}(2) then gives $\#(K_{\omega_{t_1}}\cap K_{\omega_{t_2}})\leq \#\mathcal{C}$, a contradiction.
\end{proof}

\subsection{A singleton critical set with an infinite fibre}

For comparison, below we recall the Kamalutdinov--Tetenov example. 

\begin{example}[\cite{KT20}]\label{exa:KT20}
    There are real values $p,q,r\in(0,1)$ such that the IFS $\{\varphi_i\}_{i=1}^6$, where 
    \begin{equation*}
        \begin{gathered}
            \varphi_1(x) = px, \, \varphi_2(x)=rx+\tfrac{1}{5}, \, \varphi_3(x) = -qx+\tfrac{8}{15}, \\
            \varphi_4(x) = rx+\tfrac{8}{15}-r, \, \varphi_5(x) = -rx+\tfrac{4}{5}, \, \varphi_6(x) = rx+1-r,
        \end{gathered}
    \end{equation*}
    does not satisfy the OSC, whereas the only intersection is $\varphi_3(K)\cap\varphi_4(K)=\{\varphi_3(0)=\varphi_4(1)\}$.
    Also, the Hausdorff dimension of the attractor equals the similarity dimension of $\Phi$. Please see Figure~\ref{fig:KT20} for an illustration. Since $0$ and $1$ both have a unique coding, every point in this self-similar set has at most two codings (recall Lemma~\ref{lem:Eisenough}). 

    \begin{figure}[htbp]
        \centering
        \begin{tikzpicture}[scale=1, >=stealth]
            \draw[thick,->] (0,0) to (1,0);
            \node at (0,0)[draw, fill, rectangle, minimum width=1pt, minimum height=3pt, inner sep=0pt]{};
            \node at (1,0)[draw, fill, rectangle, minimum width=1pt, minimum height=3pt, inner sep=0pt]{};
            \node at(0.5,-0.3) {$\varphi_1$};
            \draw[thick,->] (2,0) to (2.8,0);
            \node at (2,0)[draw, fill, rectangle, minimum width=1pt, minimum height=3pt, inner sep=0pt]{};
            \node at (2.8,0)[draw, fill, rectangle, minimum width=1pt, minimum height=3pt, inner sep=0pt]{};
            \node at(2.4,-0.3) {$\varphi_2$};
            \draw[thick,->] (16/3-0.8,0) to (16/3,0);
            \node at (16/3-0.8,0)[draw, fill, rectangle, minimum width=1pt, minimum height=3pt, inner sep=0pt]{};
            \node at (16/3,0)[draw, fill, rectangle, minimum width=1pt, minimum height=3pt, inner sep=0pt]{};
            \node at(16/3-0.4,-0.3) {$\varphi_4$};
            \draw[thick,<-] (16/3-0.6,0.2) to (16/3,0.2);
            \node at (16/3-0.6,0.2)[draw, fill, rectangle, minimum width=1pt, minimum height=3pt, inner sep=0pt]{};
            \node at (16/3,0.2)[draw, fill, rectangle, minimum width=1pt, minimum height=3pt, inner sep=0pt]{};
            \node at(16/3-0.3,0.5) {$\varphi_3$};
            \draw[thick,<-] (7.2,0) to (8,0);
            \node at (7.2,0)[draw, fill, rectangle, minimum width=1pt, minimum height=3pt, inner sep=0pt]{};
            \node at (8,0)[draw, fill, rectangle, minimum width=1pt, minimum height=3pt, inner sep=0pt]{};
            \node at(7.6,-0.3) {$\varphi_5$};
            \draw[thick,->] (9.2,0) to (10,0);
            \node at (9.2,0)[draw, fill, rectangle, minimum width=1pt, minimum height=3pt, inner sep=0pt]{};
            \node at (10,0)[draw, fill, rectangle, minimum width=1pt, minimum height=3pt, inner sep=0pt]{};
            \node at(9.6,-0.3) {$\varphi_6$};
            \draw[thick,dashed] (0,0) to (0,-0.8);
            \draw[thick,dashed] (2,0) to (2,-0.8);
            \draw[thick,dashed] (16/3,0) to (16/3,-0.8);
            \draw[thick,dashed] (8,0) to (8,-0.8);
            \draw[thick,dashed] (10,0) to (10,-0.8);
            \node at (0,-1.1) {$0$};
            \node at (2,-1.1) {$1/5$};
            \node at (16/3,-1.1) {$8/15$};
            \node at (8,-1.1) {$4/5$};
            \node at (10,-1.1) {$1$};
            \draw[thick] (0,-0.8) to (10,-0.8);
            \node at (0,-0.8)[draw, fill, rectangle, minimum width=1pt, minimum height=3pt, inner sep=0pt]{};
            \node at (10,-0.8)[draw, fill, rectangle, minimum width=1pt, minimum height=3pt, inner sep=0pt]{};
        \end{tikzpicture}
        \caption{The Kamalutdinov--Tetenov example}
        \label{fig:KT20}
    \end{figure}
\end{example}

Very recently, Kadirova~\cite{Kad26} constructed an orientation-preserving variant. The system in her example also satisfies the UBC. We now construct a family for which the critical set is still a singleton but the critical point has infinitely many codings. This gives a different obstruction to the OSC.

\smallskip
\paragraph{{\bf Construction for Theorem~\ref{thm:fipnewway}}}

Fix $0<\varepsilon<1/100$ and $0<p<\varepsilon/2$. For $q\in[\varepsilon/2,\varepsilon]$, define the IFS $\Phi_q := \{\varphi_{q,0},\varphi_{q,1},\varphi_{q,2},\varphi_{q,3}\}$, where 
\[
    \varphi_{q,0}(x)=px,\, \varphi_{q,1}(x)=qx+\frac{1-q}{2},\, \varphi_{q,2}(x)=px+\frac{1}{2}-p,\, \varphi_{q,3}(x)=px+1-p.
\]
Let $K_q$ be the attractor and $\Pi_q: \Sigma\to K_q$ the coding map, where $\Sigma:=\{0,1,2,3\}^{\mathbb{N}}$. Here we keep the parameter $q$ as a subscript to prevent confusion with iteration.

For an illustration, please see Figure~\ref{fig:newfipinf}. Note that the convex hull of every $K_q$ is the unit interval $[0,1]$; the only possible first level overlap is between $\varphi_{q,1}(K_q)$ and $\varphi_{q,2}(K_q)$, and
\[
    1/2 = \varphi_{q,1}(1/2) = \varphi_{q,2}(1), \quad \forall q\in[\varepsilon/2,\varepsilon].
\]
Hence $1/2\in\varphi_{q,1}(K_q)\cap\varphi_{q,2}(K_q)$ for all $q$.

\begin{figure}[htbp]
    \centering
    \begin{tikzpicture}[scale=1, >=stealth]
        \draw[thick,->] (0,0) to (1,0);
        \node at (0,0)[draw, fill, rectangle, minimum width=1pt, minimum height=3pt, inner sep=0pt]{};
        \node at (1,0)[draw, fill, rectangle, minimum width=1pt, minimum height=3pt, inner sep=0pt]{};
        \node at(0.5,-0.3) {$\varphi_{q,0}$};
        \draw[thick,->] (4.6,0) to (5.4,0);
        \node at (4.6,0)[draw, fill, rectangle, minimum width=1pt, minimum height=3pt, inner sep=0pt]{};
        \node at (5.4,0)[draw, fill, rectangle, minimum width=1pt, minimum height=3pt, inner sep=0pt]{};
        \node at(5.4,-0.3) {$\varphi_{q,1}$};
        \draw[thick,->] (4,0.3) to (5,0.3);
        \node at (4,0.3)[draw, fill, rectangle, minimum width=1pt, minimum height=3pt, inner sep=0pt]{};
        \node at(4.5,0.6) {$\varphi_{q,2}$};
        \draw[thick,->] (9,0) to (10,0);
        \node at (5,0.3)[draw, fill, rectangle, minimum width=1pt, minimum height=3pt, inner sep=0pt]{};
        \node at (9,0)[draw, fill, rectangle, minimum width=1pt, minimum height=3pt, inner sep=0pt]{};
        \node at (10,0)[draw, fill, rectangle, minimum width=1pt, minimum height=3pt, inner sep=0pt]{};
        \node at(9.6,-0.3) {$\varphi_{q,3}$};
        \draw[thick,dashed] (0,0) to (0,-0.8);
        \draw[thick,dashed] (5,0.6) to (5,-0.8);
        \draw[thick,dashed] (10,0) to (10,-0.8);
        \node at (0,-1.1) {$0$};
        \node at (5,-1.1) {$1/2$};
        \node at (10,-1.1) {$1$};
        \draw[thick] (0,-0.8) to (10,-0.8);
        \node at (0,-0.8)[draw, fill, rectangle, minimum width=1pt, minimum height=3pt, inner sep=0pt]{};
        \node at (10,-0.8)[draw, fill, rectangle, minimum width=1pt, minimum height=3pt, inner sep=0pt]{};
    \end{tikzpicture}
    \caption{The IFS $\Phi_q$ with one-point intersection}
    \label{fig:newfipinf}
\end{figure}

\begin{theorem}
    For almost all $q\in[\varepsilon/2,\varepsilon]$, we have
    \[
        \bigcup_{i\neq j} \varphi_{q,i}(K_q)\cap\varphi_{q,j}(K_q) = \{1/2\}.
    \]
    In particular, the IFS has the FIP and fails the OSC, while the middle point $1/2$ possesses infinitely many codings.
\end{theorem}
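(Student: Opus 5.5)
The plan has two parts: the coding and OSC statements follow at once from earlier results, and the real work is showing that the critical set is a singleton for almost every $q$.

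\emph{Coding and OSC statements.} Assume for the moment that $\bigcup_{i\neq j}\varphi_{q,i}(K_q)\cap\varphi_{q,j}(K_q)=\{1/2\}$. Then the FIP holds. Moreover $1/2$ is the fixed point of $\varphi_{q,1}$ and lies in $\mathcal C$, so condition (3) of Theorem~\ref{thm:fipimplybd} holds with $\i=1$. Example~\ref{exa:periodictype} then gives infinitely many codings of $1/2$, for instance $2\,3^\infty$, $1\,2\,3^\infty$, $11\,2\,3^\infty,\dots$. Hence the UBC fails, and since the OSC implies the UBC, the OSC fails too.

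\emph{Reduction to a one-parameter equation.} The first-level intervals are $[0,p]$, $[\frac{1-q}{2},\frac{1+q}{2}]$, $[\frac12-p,\frac12]$ and $[1-p,1]$. For $\varepsilon<1/100$, the intervals for $\varphi_{q,0}$ and $\varphi_{q,3}$ are disjoint from the others. So the only possible overlap is $\varphi_{q,1}(K_q)\cap\varphi_{q,2}(K_q)\subset[\frac12-\max(p,q/2),\frac12]$. A common point $x$ with $x=\varphi_{q,1}(y)=\varphi_{q,2}(z)$, where $y=\Pi_q(\omega)$ and $z=\Pi_q(\tau)$, satisfies
\[
    f_{\omega,\tau}(q):=q\bigl(\Pi_q(\omega)-\tfrac12\bigr)-p\bigl(\Pi_q(\tau)-1\bigr)=0 .
\]
The trivial solution corresponds to $y=1/2$, $z=1$. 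We must show that for a.e.\ $q$ there is no solution with $y<1/2$.

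\emph{Organising the nontrivial pairs.} The point $y<1/2$ is within distance $O(p/q)$ of $1/2$, and $z$ is close to $1$. A coding $\omega$ of such a $y$ begins with $1^k j$ with $j\neq1$ for some $k\ge0$ (and $\omega\neq1^\infty$). Similarly $\tau$ begins with $3^\ell j'$ with $j'\neq3$. I will therefore write the exceptional parameter set as a countable union over $(k,\ell,j,j')$ of sets
\[
    E_{k,\ell,j,j'}=\{q:\exists\,\omega\in[1^kj],\ \tau\in[3^\ell j'] \text{ with } f_{\omega,\tau}(q)=0\}.
\]
On each such compact symbolic piece, $\Pi_q(\omega)-\frac12$ and $\Pi_q(\tau)-1$ stay at distance comparable to $q^k$ and $p^\ell$ from $0$. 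This removes the degenerate solution, and it suffices to show each $E_{k,\ell,j,j'}$ is Lebesgue-null.

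\emph{Main obstacle: transversality and covering.} The key step is a transversality estimate on each piece: wherever $|f_{\omega,\tau}(q)|$ is small, $|f'_{\omega,\tau}(q)|$ is bounded below. I would check this in the style of \cite[Section 7.3]{BSS23}. Here $\frac{d}{dq}\Pi_q(\omega)$ is a power series whose size is controlled by the number of leading $1$'s. Because $p<\varepsilon/2$ and $q\le\varepsilon<1/100$ are small, the term $\Pi_q(\omega)-\frac12$ should dominate the derivative, while the $q$-derivative of $p\,\Pi_q(\tau)$ is of lower order. If necessary, I would subdivide $[\varepsilon/2,\varepsilon]$ to make this uniform.

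Given transversality, I would cover with level-$n$ cylinder pairs $([\mathbf u],[\mathbf v])$, where $\mathbf u,\mathbf v\in\Lambda_\delta$. The set of $q$ for which $f$ can vanish on $[\mathbf u]\times[\mathbf v]$ has measure $\lesssim\delta$, with the constant depending on $k$ and $\ell$. The number of such pairs is $\lesssim\delta^{-2s}$, where $3p^{s}+q^{s}=1$ forces $s$ to be small, and in particular $2s<1$. The sum $\delta^{1-2s}\to0$, so each $E_{k,\ell,j,j'}$ is null. This is the same counting idea as in Theorem~\ref{thm:transfam_drop}.
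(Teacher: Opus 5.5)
Your overall plan is the paper's. You reduce to $\varphi_{q,1}(K_q)\cap\varphi_{q,2}(K_q)$, cover the nontrivial coding pairs by countably many cylinder pairs, prove transversality on each pair, and finish with the $\delta^{1-2s}$ count using $2s(q)<1$, which is what \cite[Lemma 7.3.7]{BSS23} packages. The coding and OSC consequences are handled correctly. The gap is that the transversality step, which is the entire content of the proof, is only sketched, and the reason you give for it is false on part of your decomposition.

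On $[1^k2]$ it is not true that $\Pi_q(\omega)-\frac12$ stays comparable to $q^k$. Since $\Pi_q(23^\infty)=\frac12$ and $\varphi_{q,1}$ fixes $\frac12$, one has $\Pi_q(1^k2\omega')-\frac12=-q^kp\bigl(1-\Pi_q(\omega')\bigr)$, which vanishes at $\omega'=3^\infty$. On $[1^k2]\times[3^\ell j']$ the degenerate solution is still excluded, but only because $j'\neq3$. A zero of $f_{\omega,\tau}$ forces $q^{k+1}p(1-\Pi_q(\omega'))=p^{\ell+1}(1-\Pi_q(j'\tau'))\approx p^{\ell+1}$. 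So the leading term $(k+1)q^kp(1-\Pi_q(\omega'))$ of $|f'_{\omega,\tau}|$ is only $\approx(k+1)p^{\ell+1}/q$. Meanwhile the absolute bound $|D_q|\le M$ controls the error term $q^{k+1}p\,D_q(\omega')$ only by $q^{k+1}pM$, and the term $p\,D_q(\tau)$ only by $pM$. For large $\ell$ both bounds exceed the leading term. Hence your heuristics do not yield transversality: neither ``size controlled by the leading $1$'s'' nor ``$p\Pi_q(\tau)$ contributes lower order'' suffices. Subdividing $[\varepsilon/2,\varepsilon]$ cannot help, because the non-uniformity is symbolic, not parametric. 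Also, $qD_q(\omega)$ is not of lower order than $\Pi_q(\omega)-\frac12$. Its leading part is $k(\Pi_q(\omega)-\frac12)$, which helps only because it has the same sign.

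What is missing is a \emph{relative} derivative bound. The paper proves $|D_q(k\omega)|\le 2(1-\Pi_q(k\omega))$ for $k\in\{0,1,2\}$. It refines $[1^n2]$ into the pieces $[1^n23^\ell k]$ with $k\neq3$, so that $1^n23^\infty$ is excluded. It then writes $\Pi_q(\bi)=\frac12-q^nA(q)$ and $\Pi_q(\bj)=\frac12-p^{m+1}B(q)$ with $|A'|\le2A$ and $|B'|\le2B$. Whenever $|\Pi_q(\bi)-\Pi_q(\bj)|\le\tilde c_{\bm u}/4$, this gives $\bigl|\frac{\partial}{\partial q}(\Pi_q(\bi)-\Pi_q(\bj))\bigr|\ge(\frac nq-\frac92)q^nA$.

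Your coarser pieces can be repaired the same way. The identities $1-\Pi_q(3^m\xi)=p^m(1-\Pi_q(\xi))$ and $D_q(3^m\xi)=p^mD_q(\xi)$ extend the relative bound to $|D_q(\eta)|\le2(1-\Pi_q(\eta))$ for every $\eta\in\Sigma$. After that, both error terms are $O(q)$ times the leading term near a zero, and transversality holds on $[1^k2]\times[3^\ell j']$ with $\delta\approx p^{\ell+1}$. Some such estimate has to be supplied, however; as written, the proposal does not prove the one nontrivial step.
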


The last claim follows because $1/2$ already has the two distinct codings $1^\infty$ and $23^\infty$; Example~\ref{exa:periodictype} then yields infinitely many codings. To prove the remaining part, we record some derivative estimates. For any finite word $\i=i_1\cdots i_n$, write $\varphi_{q,\i}:=\varphi_{q,i_1}\circ\cdots\circ\varphi_{q,i_n}$ for the composition. Set 
\[
    F_n(q) := \max_{\i\in\{0,1,2,3\}^n} \Big| \frac{\partial}{\partial q}\varphi_{q,\i}(0) \Big|.
\]
From $\varphi_{q,i\i}=\varphi_{q,i}\circ\varphi_{q,\i}$, we obtain 
\[
    \frac{\partial}{\partial q}\varphi_{q,i\i}(0) = \left\{\begin{array}{ll} p\cdot\frac{\partial}{\partial q}\varphi_{q,\i}(0), & i=0,2,3, \\ \varphi_{q,\i}(0)+q\cdot\frac{\partial}{\partial q}\varphi_{q,\i}(0)-\frac{1}{2}, & i=1. \end{array}\right.
\]
Since $\varphi_{q,\i}(0)\in[0,1]$, $|\varphi_{q,\i}(0)-\frac{1}{2}|\leq\frac{1}{2}$. Hence (recall that $p<q$)
\[
    F_n(q) \leq \max\Big\{ pF_{n-1}(q), qF_{n-1}(q)+\frac{1}{2} \Big\} \leq qF_{n-1}(q)+\frac{1}{2}.
\]
An induction easily yields 
\[
    F_n(q) \leq \frac{1}{2}\sum_{k=0}^{n-1} q^k \leq \frac{1}{2(1-q)}.
\]

Note that for every $\omega\in\Sigma$, the finite composition $\varphi_{q,\omega|_n}(0)$ converges uniformly to $\Pi_q(\omega)$, where $\omega|_n$ is the $n$-prefix of $\omega$, and their derivatives also converge uniformly on this interval; see Lemma~\ref{lem:uniformc1}. Writing $D_q(\omega) := \frac{\partial}{\partial q}\Pi_q(\omega)$, we have 
\[
    |D_q(\omega)| = \Big| \lim_{n\to\infty} \frac{\partial}{\partial q} \varphi_{q,\omega|_n}(0) \Big| \leq \frac{1}{2(1-q)},
\]
So 
\begin{equation}\label{eq:fip_supderi}
    M := \sup_{q\in[\varepsilon/2,\varepsilon]}\sup_{\omega\in\Sigma} |D_q(\omega)| \leq \frac{1}{2(1-\varepsilon)}. 
\end{equation}
For $k\in\{0,1,2\}$, we have the simple estimate 
\[
    \Pi_q(k\omega) = \varphi_{q,k}(\Pi_q(\omega)) \leq \max_{x\in[0,1]} \varphi_{q,k}(x) \leq \max\Big\{ p, \frac{1+q}{2},\frac{1}{2} \Big\} \leq \frac{1+\varepsilon}{2}, \quad \forall \omega\in\Sigma,
\]
and thus (recall that $\varepsilon<\frac{1}{100}$)
\begin{equation}\label{eq:dqkomega}
    |D_q(k\omega)| \leq M \leq \frac{1}{2(1-\varepsilon)} \leq 1-\varepsilon \leq 2(1-\Pi_q(k\omega)), \quad \forall \omega\in\Sigma.
\end{equation}

Assume that for some $q$ there exists $x\in\varphi_{q,1}(K_q)\cap\varphi_{q,2}(K_q)$ with $x\neq 1/2$. Choose $\bi\in[1]$ and $\bj\in[2]$ with $x=\Pi_q(\bi)=\Pi_q(\bj)$. Since $\Pi_q(1^\infty)=1/2$, $\bi\neq 1^\infty$ and thus $\bi=1^ni\bi'$ for some $n\geq 1$ and $i\neq 1$. If $i=3$, then 
\[
    x = \Pi_q(\bi) \geq \varphi_{q,1}^n\varphi_{q,3}(0) = \varphi_{q,1}^n(1-p) > \varphi_{q,1}^n(1/2) = 1/2,
\]
contradicting $x\in\varphi_{q,2}(K_q)\subset[1/2-p,1/2]$. If $i=2$, the tail after the $2$ cannot be all $3$'s: otherwise, $x=\varphi_{q,1}^n(\Pi_q(23^\infty))=\varphi_{q,1}^n(1/2)=1/2$, a contradiction. Consequently, either $\bi\in[1^n0]$ or $\bi\in[1^n23^\ell k]$ for some $\ell\geq 0$ and $k\in\{0,1,2\}$. Similarly, since $\Pi_q(23^\infty)=1/2$, the word $\bj$ cannot be $23^\infty$. Hence $\bj\in[23^mj]$ for some $m\geq 0$ and $j\in\{0,1,2\}$. 

Define two countable families of finite words
\[
    \mathcal{U} := \{1^n0: n\geq 1\} \cup \{1^n23^\ell k: n\geq 1, \ell\geq 0, k\in\{0,1,2\}\}
\]
and 
\[
    \mathcal{V} := \{23^mj: m\geq 0, j\in\{0,1,2\}\}.
\]
Then every extra intersection corresponds to some pair $(\bm{u},\bm{v})\in\mathcal{U}\times\mathcal{V}$.

The following lemma is a special case of~\cite[Lemma 7.3.7]{BSS23}.

\begin{lemma}\label{lem:bsstrans}
    Let $\bm{u}\in\mathcal{U}$ and $\bm{v}\in\mathcal{V}$. If there exists some $\delta>0$ such that for all $\bi\in[\bm{u}]$ and $\bj\in[\bm{v}]$, 
    \begin{equation}\label{eq:bsstrans}
        \forall q\in[\varepsilon/2,\varepsilon], |\Pi_q(\bi)-\Pi_q(\bj)|\leq\delta \Longrightarrow \Big| \frac{\partial}{\partial q}(\Pi_q(\bi)-\Pi_q(\bj)) \Big| \geq\delta,
    \end{equation}
    then 
    \[
        \dimh\{q\in[\varepsilon/2,\varepsilon]: \varphi_{q,\bm{u}}(K_q)\cap\varphi_{q,\bm{v}}(K_q)\neq\varnothing\} \leq 2\max_{q\in[\varepsilon/2,\varepsilon]}s(q),
    \]
    where $s(q)$ is the similarity dimension of $\Phi_q$.
\end{lemma}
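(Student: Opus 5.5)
The plan is the standard transversality covering argument, in the spirit of \cite[Section 7.3]{BSS23}. Write
\[
E:=\{q\in[\varepsilon/2,\varepsilon]: \varphi_{q,\bm u}(K_q)\cap\varphi_{q,\bm v}(K_q)\neq\varnothing\},
\]
and cover $E$ at every small scale $\rho$ by about $\rho^{-2s'}$ intervals of length $O(\rho)$.

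\textbf{Step 1 (localise the parameter).} Fix $t>2\max_q s(q)$. Split $[\varepsilon/2,\varepsilon]$ into finitely many closed subintervals $J$ so short that the following holds. Let $\bar r_i(J):=\max_{q\in J} r_i(q)$; only $r_1=q$ actually varies. Then the similarity dimension $s_J$ of the ratios $(\bar r_i(J))_i$ satisfies $2s_J<t$. This is possible by continuity of $s$ in the ratios. Since a countable union of sets of zero $\mathcal H^t$-measure has zero measure, it suffices to show $\mathcal H^t(E\cap J)=0$ for each $J$.

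\textbf{Step 2 (symbolic cover).} For $\rho>0$ small, let $\Lambda_\rho(J)$ be the stopping set of \eqref{eq:lambdan} computed with the ratios $\bar r_i(J)$. Then $\#\Lambda_\rho(J)\lesssim\rho^{-s_J}$, and $r_{q,\bm w}\le\bar r_{\bm w}(J)\le\rho$ for every $q\in J$ and $\bm w\in\Lambda_\rho(J)$. If $q\in E\cap J$, pick codings $\bi\in[\bm u]$ and $\bj\in[\bm v]$ with $\Pi_q(\bi)=\Pi_q(\bj)$. Their tails after $\bm u$, respectively $\bm v$, have prefixes $\bm a,\bm b\in\Lambda_\rho(J)$. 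Hence $q$ lies in
\[
E_{\bm a,\bm b}:=\{q\in J:\varphi_{q,\bm u\bm a}(K_q)\cap\varphi_{q,\bm v\bm b}(K_q)\neq\varnothing\}.
\]
Fix reference points $\bi_0:=\bm u\bm a0^\infty$ and $\bj_0:=\bm v\bm b0^\infty$, and set $g(q):=\Pi_q(\bi_0)-\Pi_q(\bj_0)$. Since $K_q\subset[0,1]$, we have $|g|\le 2\rho$ on $E_{\bm a,\bm b}$.

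\textbf{Step 3 (transversality bounds each piece).} Take $2\rho<\delta$. On the open set $\{|g|<\delta\}$, hypothesis \eqref{eq:bsstrans} gives $|g'|\ge\delta$. By continuity, $g'$ has constant sign on each component of that set, so $g$ is strictly monotone there. Hence $\{|g|\le 2\rho\}$ meets each component in an interval of length at most $4\rho/\delta$. We also need to bound the number of components that meet $\{|g|\le 2\rho\}$. On such a component $g$ moves monotonically between values of size at most $2\rho$ and values of size $\delta$. So each such component contains an interval of length at least $(\delta-2\rho)/M'$, where $M'$ is a uniform bound on $|g'|$ coming from \eqref{eq:fip_supderi}. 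These intervals are disjoint inside $J$, so there are at most $C(\delta,M')$ of them, independently of $\rho$, $\bm a$, $\bm b$. Therefore $E_{\bm a,\bm b}$ is covered by at most $C$ intervals of length at most $4\rho/\delta$.

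\textbf{Step 4 (sum).} Summing over pairs gives
\[
\mathcal H^t_{4\rho/\delta}(E\cap J)\lesssim_{\delta,M'}\#\Lambda_\rho(J)^2\,\rho^t\lesssim\rho^{t-2s_J}\longrightarrow 0,
\]
so $\dimh E\le 2\max_q s(q)$. I expect the main care to be needed in Step 3: making the component count uniform in $\rho$, $\bm a$, $\bm b$. This is exactly where \eqref{eq:bsstrans} must hold for all $\bi\in[\bm u]$ and $\bj\in[\bm v]$ simultaneously with one $\delta$, together with the uniform derivative bound. A secondary point is the uniformity of the stopping sets in $q$, which is handled by the localisation in Step 1.
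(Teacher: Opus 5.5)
Your proposal is correct. The paper gives no proof of its own: it quotes the statement as a special case of \cite[Lemma 7.3.7]{BSS23}. Your stopping-set covering argument is the standard proof behind that kind of statement: roughly $\rho^{-2s}$ pairs $(\bm a,\bm b)$, each contributing $O_\delta(1)$ parameter intervals of length $O(\rho/\delta)$, by the uniform transversality hypothesis together with the derivative bound \eqref{eq:fip_supderi}.

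There are two small points to tidy up. First, the subdivision in Step 1 is unnecessary: only $r_1=q$ varies and $s(q)$ is increasing, so taking the maximal ratios over all of $[\varepsilon/2,\varepsilon]$ already gives $s_J=s(\varepsilon)=\max_q s(q)$. Second, in Step 3 every component of $\{|g|<\delta\}$ other than $J$ itself has an endpoint where $|g|=\delta$. So your lower bound on component lengths holds for all components except possibly one, which changes your count by at most one.
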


For our parameters, $s(q)<1/2$; hence the right hand side is $<1$ and the corresponding set of $q$ has Lebesgue measure zero.

We now verify the transversality condition~\eqref{eq:bsstrans} for every pair $(\bm{u},\bm{v})\in\mathcal{U}\times\mathcal{V}$.  Let $\bi\in[\bm{u}]$ and $\bj\in[\bm{v}]$. 
By definition of $\mathcal{U}$, either $\bi=1^n0\omega$ or $\bi=1^n23^\ell k\omega$ for some $n\geq 1, \ell\geq 0, k\neq 3$ and $\omega\in\Sigma$. From $\varphi_{q,1}^n(x) = q^n(x-\frac{1}{2})+\frac{1}{2}$, we obtain $\Pi_q(\bi) = \frac{1}{2} - q^nA(q)$,
where 
\[
    A(q) := \left\{\begin{array}{ll} \frac{1}{2}-p\Pi_q(\omega), & \bi=1^n0\omega, \\ p^{\ell+1}(1-\Pi_q(k\omega)), & \bi=1^n23^\ell k\omega. \end{array}\right.
\]
Since $k\neq 3$ and $p<\varepsilon<1/100$, we have 
\[
    A(q) \geq \min\Big\{ \frac{1}{2}-p, p^{\ell+1}\frac{1-\varepsilon}{2} \Big\} =: c_{\bm{u}} >0,
\]
Moreover, for the derivative we have
\begin{itemize}
    \item if $\bi=1^n0\omega$ then by~\eqref{eq:fip_supderi},
    \[
        |A'(q)| = p|D_q(\omega)| \leq pM \leq \frac{p}{2(1-\varepsilon)} \leq \frac{1}{2}-\varepsilon \leq \frac{1}{2}-p\Pi_q(\omega) = A(q);
    \]
    \item if $\bi=1^n23^\ell k\omega$ then by~\eqref{eq:dqkomega},
    \[
        |A'(q)| = p^{\ell+1}|D_q(k\omega)| \leq 2p^{\ell+1}(1-\Pi_q(k\omega)) = 2A(q).
    \]
\end{itemize}
Thus in either case
\begin{equation}\label{eq:a'qle2aq}
    |A'(q)| \leq 2A(q), \quad \forall q\in[\varepsilon/2,\varepsilon].
\end{equation}
Similarly, by definition of $\mathcal{V}$, $\bj=23^mj\tau$ for some $m\geq 0$, $j\neq 3$ and $\tau\in\Sigma$. So $\Pi_q(\bj) = \frac{1}{2} - p^{m+1}B(q)$,
where $B(q):=1-\Pi_q(j\tau)$. 
Since $j\neq 3$, $B(q)>0$. By~\eqref{eq:dqkomega}, we also have
\begin{equation}\label{eq:b'qle2bq}
    |B'(q)| = |D_q(j\tau)| \leq 2(1-\Pi_q(j\tau)) = 2B(q), \quad \forall q\in[\varepsilon/2,\varepsilon].
\end{equation}

The difference of the two projections is 
\[
    \Pi_q(\bi) - \Pi_q(\bj) = \Big( \frac{1}{2}-q^nA(q) \Big) - \Big( \frac{1}{2}-p^{m+1}B(q) \Big) = -q^nA(q)+p^{m+1}B(q).
\]
Since $q\geq\varepsilon/2$ and $A(q)\geq c_{\bm{u}}$, 
\begin{equation}\label{eq:tildecu}
    q^nA(q) \geq \frac{\varepsilon^n}{2^n}c_{\bm{u}} =: \tilde{c}_{\bm{u}}. 
\end{equation}
Set $\delta:=\tilde{c}_{\bm{u}}/4$. 

Assume $|\Pi_q(\bi) - \Pi_q(\bj)|\leq\delta$. Then $p^{m+1}B(q) \leq q^nA(q) + \delta \leq \frac{5}{4}q^nA(q)$. Combining this with~\eqref{eq:a'qle2aq} and~\eqref{eq:b'qle2bq}, we have
\begin{align*}
    \Big| \frac{\partial}{\partial q}(\Pi_q(\bi)-\Pi_q(\bj)) \Big| &= |-nq^{n-1}A(q)-q^nA'(q)+p^{m+1}B'(q)| \\
    &\geq nq^{n-1}A(q)-q^n|A'(q)|-p^{m+1}|B'(q)| \\
    &\geq \frac{n}{q}\cdot q^nA(q) - 2q^nA(q) - 2p^{m+1}B(q) \\
    &\geq \Big( \frac{n}{q}-\frac{9}{2} \Big)q^nA(q).
\end{align*}
Since $n\geq 1$ and $q\leq\varepsilon<1/100$, $\frac{n}{q}-\frac{9}{2}\geq\frac{1}{2q}$. So 
\[
    \Big| \frac{\partial}{\partial q}(\Pi_q(\bi)-\Pi_q(\bj)) \Big| \geq \frac{1}{2q}q^nA(q) \geq \frac{1}{2q}\tilde{c}_{\bm{u}} > \frac{\tilde{c}_{\bm{u}}}{4} = \delta.
\]
Thus the transversality condition~\eqref{eq:bsstrans} is satisfied with $\delta=\tilde{c}_{\bm{u}}/4$.

As remarked after Lemma~\ref{lem:bsstrans}, the set
\[
    E_{\bm{u},\bm{v}}:=\{q\in[\varepsilon/2,\varepsilon]: \varphi_{q,\bm{u}}(K_q)\cap\varphi_{q,\bm{v}}(K_q)\neq\varnothing\}
\]
has Hausdorff dimension $<1$ and in particular has Lebesgue measure zero. Both $\mathcal{U}$ and $\mathcal{V}$ are countable, so the union $E:=\bigcup_{\bm{u}\in\mathcal{U}}\bigcup_{\bm{v}\in\mathcal{V}}E_{\bm{u},\bm{v}}$ is a null set. For every $q\in[\varepsilon/2,\varepsilon]\notin E$, there is no extra intersection in $\varphi_{q,1}(K_q)\cap\varphi_{q,2}(K_q)$ besides $1/2$; one easily checks that the other pairs of first level pieces are disjoint by construction. So  
\[
    \bigcup_{i\neq j} \varphi_{q,i}(K_q)\cap\varphi_{q,j}(K_q) = \{1/2\}, \quad \forall q\in [\varepsilon/2,\varepsilon]\setminus E,
\] 
which proves Theorem~\ref{thm:fipnewway}.

\section{Dimension of fibres and critical sets}
\label{sec:dimoffibresandc}


\subsection{Dimension of typical fibres}

We begin with an elementary estimate that will be used in both the typical and uniform parts of the argument. For $E\subset\Sigma$ and $\delta>0$, let $N_\delta(E)$ denote the least number of closed $\rho$-balls of radius $\delta$ required to cover $E$.

\begin{lemma}\label{lem:localfibreest}
    For every $x\in K$, 
    \[
        \dimub \Pi^{-1}(x) \leq s-\underline{d}_{\nu}(x), 
    \]
    where $\underline{d}_{\nu}(x) := \liminf_{r\to 0} \frac{\log\nu(B(x,r))}{\log r}$ is the lower local dimension of $\nu$ at $x$.
\end{lemma}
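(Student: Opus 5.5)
The plan is a direct covering argument at the scales $\delta$ given by the stopping sets $\Lambda_\delta$ of~\eqref{eq:lambdan}. Fix $x\in K$ and $0<\delta<1$. Every closed $\rho$-ball of radius comparable to $\delta$ in $\Sigma$ is a cylinder $[\i]$ with $\i\in\Lambda_\delta$, up to a bounded change in the radius: recall that $\underline{r}\delta<r_{\i}\le\delta$ for $\i\in\Lambda_\delta$. Hence $N_\delta(\Pi^{-1}(x))$ is bounded by
\[
    \#\{\i\in\Lambda_\delta: [\i]\cap\Pi^{-1}(x)\neq\varnothing\}\le \#\{\i\in\Lambda_\delta: x\in K_{\i}\}.
\]
So it suffices to bound the number of $\i\in\Lambda_\delta$ with $x\in K_{\i}$.

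For the counting step, I use the mass of the Bernoulli measure $\mu$. Each such $\i$ satisfies $K_{\i}\subset B(x,2\delta)$, since $\diam K_{\i}\le r_{\i}\diam K\le 2\delta$ by $K\subset B(0,1)$. Also $\mu([\i])=r_{\i}^s\ge \underline{r}^s\delta^s$. The cylinders $[\i]$, $\i\in\Lambda_\delta$, are pairwise disjoint, and each is contained in $\Pi^{-1}(K_{\i})\subset\Pi^{-1}(B(x,2\delta))$. Therefore
\[
    \#\{\i\in\Lambda_\delta: x\in K_{\i}\}\cdot \underline{r}^s\delta^s\le \mu\bigl(\Pi^{-1}(B(x,2\delta))\bigr)=\nu(B(x,2\delta)).
\]
This gives $N_\delta(\Pi^{-1}(x))\lesssim \delta^{-s}\nu(B(x,2\delta))$.

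Finally, take logarithms and divide by $-\log\delta$:
\[
    \frac{\log N_\delta(\Pi^{-1}(x))}{-\log\delta}\le s-\frac{\log\nu(B(x,2\delta))}{\log\delta}+o(1).
\]
Taking $\limsup_{\delta\to0}$, the last term becomes $-\liminf\frac{\log \nu(B(x,2\delta))}{\log(2\delta)}=-\underline{d}_\nu(x)$, since $\log(2\delta)/\log\delta\to1$. This yields $\dimub\Pi^{-1}(x)\le s-\underline{d}_\nu(x)$.

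The only mildly delicate point is matching $\rho$-balls to cylinders in $\Lambda_\delta$. A closed $\rho$-ball of radius $\delta$ around $\omega$ is $[\omega|_n]$ for the shortest $n$ with $r_{\omega|_n}\le\delta$, and this is exactly the $\Lambda_\delta$-prefix of $\omega$. So the identification is exact, and upper box dimension is insensitive to using radii $\delta$ along all scales. No other obstacle is expected.
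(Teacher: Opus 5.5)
Your proposal is correct and follows the paper's argument. You cover $\Pi^{-1}(x)$ by the pairwise disjoint cylinders $[\i]$ with $\i\in\Lambda_\delta$, bound their number by $\delta^{-s}\nu(B(x,c\delta))$ using $\mu([\i])=r_{\i}^s\gtrsim\delta^s$, and take logarithms. The differences from the paper are cosmetic: you count only the cylinders with $x\in K_{\i}$ rather than the paper's larger family $\mathcal{I}_\delta(x)$ of cylinders meeting $B(x,\delta)$, and you use radius $2\delta$ instead of $3\delta$ (the containment in the open ball is valid since $\diam K<2$).
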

\begin{proof}
    For $0<\delta<1$ and $x\in K$, let
    \[
        \mathcal{I}_\delta(x)
        :=\{\i\in\Lambda_\delta:K_{\i}\cap B(x,\delta)\neq\varnothing\}.
    \]
    The pairwise disjoint cylinders $[\i]$, $\i\in \mathcal{I}_\delta(x)$, cover $\Pi^{-1}(x)$ and each has $\rho$-diameter at most $\delta$. Therefore, $N_\delta(\Pi^{-1}(x))\leq \#\mathcal{I}_\delta(x)$ and
    \begin{equation}\label{eq:fibre-cover-local-dimension}
        \dimub\Pi^{-1}(x) \leq \limsup_{\delta\to 0} \frac{\log \#\mathcal{I}_\delta(x)}{-\log\delta}.
    \end{equation}
    For every $\i\in \mathcal{I}_\delta(x)$ we have $K_{\i}\subset B(x,3\delta)$ and $\mu([\i])=r_{\i}^s\gtrsim \delta^s$. It follows that
    \[
        \nu(B(x,3\delta)) \geq \sum_{\i\in \mathcal{I}_\delta(x)} \mu([\i]) \gtrsim \delta^s \#\mathcal{I}_\delta(x)
    \]
    and thus
    \begin{equation}\label{eq:stopfibrecount}
        N_\delta(\Pi^{-1}(x)) \leq \#\mathcal{I}_\delta(x) \lesssim \delta^{-s}\nu(B(x,3\delta)).
    \end{equation}
    Inserting this into~\eqref{eq:fibre-cover-local-dimension} yields
    \[
        \dimub\Pi^{-1}(x) \leq  \limsup_{\delta\to 0} \frac{\log \big( \delta^{-s}\nu(B(x,3\delta)) \big)}{-\log \delta} = s-\underline d_\nu(x).
    \]
\end{proof}

The key ingredient is the dimension conservation principle of Feng and Hu~\cite{FH09}. We record the special case needed here.

\begin{theorem}[{\cite{FH09}}]\label{thm:FH09}
    For any probability vector $\mathbf{p}=(p_i)_{i\in\Lambda}$, $\nu_{\mathbf{p}}$ is exact dimensional and
    \begin{equation}\label{eq:fh09_1}
        \dimh\nu_{\mathbf{p}} = \frac{H_{\mu_{\mathbf{p}}}(\mathcal{P}\mid\sigma^{-1}\Pi^{-1}\mathcal{B}(\R)) - H_{\mu_{\mathbf{p}}}(\mathcal{P}\mid\Pi^{-1}\mathcal{B}(\R))}{-\sum_{i\in\Lambda} p_i\log r_i},
    \end{equation}
    where $\mathcal{P}:=\{[i]:i \in\Lambda\}$ is the first-coordinate partition. Moreover, letting $\{\mu_{\mathbf{p},x}\}_{x\in K}$ denote the conditional measures of $\mu_{\mathbf{p}}$ with respect to the measurable partition of $\Sigma$ into fibres of $\Pi$, we have for $\nu_{\mathbf{p}}$-almost every $x\in K$ that
    \begin{equation}\label{eq:fh09_2}
        \dimh \mu_{\mathbf{p},x} = \dims\nu_{\mathbf{p}} - \dimh\nu_{\mathbf{p}}.
    \end{equation}
\end{theorem}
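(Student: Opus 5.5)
This is a cited result of Feng and Hu~\cite{FH09}. To prove it, I would follow their \emph{projection entropy} strategy, carried out for the Bernoulli measure $\mu_{\mathbf{p}}$ on $(\Sigma,\sigma)$. Write
\[
h:=-\sum_i p_i\log p_i,\qquad \chi:=-\sum_i p_i\log r_i,\qquad h_\Pi:=H_{\mu_{\mathbf{p}}}(\mathcal{P}\mid\sigma^{-1}\Pi^{-1}\mathcal{B}(\R))-H_{\mu_{\mathbf{p}}}(\mathcal{P}\mid\Pi^{-1}\mathcal{B}(\R)).
\]
The goal is two pointwise statements, each holding for $\mu_{\mathbf{p}}$-a.e.\ $\bi$ with $x=\Pi(\bi)$ and $\bi$ determining a scale $r_n:=r_{\bi|_n}$:
\[
\lim_{n\to\infty}\frac{\log\nu_{\mathbf{p}}(B(x,r_n))}{\log r_n}=\frac{h_\Pi}{\chi},
\qquad
\lim_{n\to\infty}\frac{\log\mu_{\mathbf{p},x}([\bi|_n])}{\log r_n}=\frac{h-h_\Pi}{\chi}.
\]
The first gives exact dimensionality together with \eqref{eq:fh09_1}. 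The second gives \eqref{eq:fh09_2}, since $h/\chi=\dims\nu_{\mathbf{p}}$. By the ergodic theorem, $\log r_{\bi|_n}/n\to-\chi$, so it suffices to compute the limits of $-\tfrac1n\log$ of both quantities.

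\textbf{Step 1 (local dimension of $\nu_{\mathbf{p}}$).} Using self-similarity, $\varphi_{i_1}^{-1}B(x,r)=B(\Pi\sigma\bi,r/r_{i_1})$, so I would write the ratio
\[
\frac{\nu_{\mathbf{p}}(B(x,r))}{\nu_{\mathbf{p}}\bigl(B(\Pi\sigma\bi,r/r_{i_1})\bigr)}
\]
as $p_{i_1}$ times the conditional probability of $[i_1]$ given $\Pi^{-1}B(x,r)$, divided by the conditional probability given $\sigma^{-1}\Pi^{-1}B(\Pi\sigma\bi,r/r_{i_1})$. Iterating this $n$ times along the orbit $\sigma^k\bi$ with $r=r_n$ telescopes $\log\nu_{\mathbf{p}}(B(x,r_n))$ into a Birkhoff-type sum. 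Its terms are $\log p_{i_{k+1}}$ plus differences of information functions evaluated at $\sigma^k\bi$ and at radii $r_n/r_{\bi|_k}$, which depend on $n-k$.

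\textbf{Step 2 (passing to the limit).} As the radius tends to $0$, the ball-conditioned information functions converge a.e.\ to $I_{\mu_{\mathbf{p}}}(\mathcal{P}\mid\Pi^{-1}\mathcal{B}(\R))$ and $I_{\mu_{\mathbf{p}}}(\mathcal{P}\mid\sigma^{-1}\Pi^{-1}\mathcal{B}(\R))$. This is a differentiation theorem for measures on $\R$, and is where one-dimensionality of the target is used through a Besicovitch-type argument. I would then apply a Maker-type ergodic theorem for the family of functions indexed by $n-k$, which requires an integrable dominating function. Granting this, the averages give
\[
-\frac1n\log\nu_{\mathbf{p}}(B(x,r_n))\to h-\bigl(H(\mathcal{P}\mid\Pi^{-1}\mathcal{B})-\ldots\bigr)=h_\Pi
\]
after cancelling with the $\log p$ terms; that is, the $h$ from $\sum\log p_{i_k}$ is offset by the integrated conditional information, leaving $h_\Pi$.

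\textbf{Main obstacle.} The hard step is the dominating function. I would first try Chung's lemma: $\sup_r I(\mathcal{P}\mid\text{ball})$ has $\mu$-tail bounded by a constant times $\#\mathcal{P}\,e^{-t}$. This comes from a Besicovitch covering (maximal) inequality on $\R$ and yields an $L^1$ bound. One must also handle the fact that the balls are not exactly nested along the orbit, so radii need to be compared up to bounded factors.

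\textbf{Step 3 (fibre dimension).} By disintegration and the martingale theorem,
\[
\mu_{\mathbf{p},x}([\bi|_n])\approx\frac{\mu_{\mathbf{p}}\bigl([\bi|_n]\cap\Pi^{-1}B(x,r_n)\bigr)}{\nu_{\mathbf{p}}(B(x,r_n))},
\]
and the numerator is $\mu_{\mathbf{p}}([\bi|_n])$ up to a sub-exponential factor, again controlled by the same maximal inequality. The Shannon--McMillan--Breiman theorem for the Bernoulli measure gives $\mu_{\mathbf{p}}([\bi|_n])=e^{-nh+o(n)}$. Step 1 gives the denominator as $e^{-nh_\Pi+o(n)}$. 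Hence the local dimension of $\mu_{\mathbf{p},x}$ at $\bi$ in the metric $\rho$ is $(h-h_\Pi)/\chi$ for a.e.\ $\bi$. By the mass distribution principle, this yields \eqref{eq:fh09_2}.
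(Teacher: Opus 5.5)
The paper gives no proof of this statement; it is imported from \cite{FH09}, so the relevant comparison is with the Feng--Hu argument. Your Steps 1--2 are in the spirit of that argument: exact pull-back of balls under similitudes, Besicovitch differentiation, a maximal inequality as integrable dominant, and Maker's ergodic theorem. However, the one-step identity is written upside down. Since $[i_1]\cap\Pi^{-1}B(x,r)=[i_1]\cap\sigma^{-1}\Pi^{-1}B(\Pi\sigma\bi,r/r_{i_1})$ and $\mu_{\mathbf p}$ is $\sigma$-invariant,
\[
\frac{\nu_{\mathbf p}(B(x,r))}{\nu_{\mathbf p}(B(\Pi\sigma\bi,r/r_{i_1}))}=\frac{\mu_{\mathbf p}\bigl([i_1]\mid\sigma^{-1}\Pi^{-1}B(\Pi\sigma\bi,r/r_{i_1})\bigr)}{\mu_{\mathbf p}\bigl([i_1]\mid\Pi^{-1}B(x,r)\bigr)}=\frac{p_{i_1}}{\mu_{\mathbf p}\bigl([i_1]\mid\Pi^{-1}B(x,r)\bigr)}.
\]
The last equality holds because the first coordinate is independent of $\sigma^{-1}\mathcal B(\Sigma)$. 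So there is no extra factor $p_{i_1}$, and only one family of information functions appears. Taken literally, your version telescopes to $-\frac1n\log\nu_{\mathbf p}(B(x,r_n))\to H_{\mu_{\mathbf p}}(\mathcal P\mid\Pi^{-1}\mathcal B(\R))=h-h_\Pi$, which is the fibre entropy, not $h_\Pi$. With the correct identity, the $k$-th term is $\log p_{i_{k+1}}+g_{n-k}(\sigma^k\bi)$, where $g_m(\bj):=-\log\mu_{\mathbf p}([j_1]\mid\Pi^{-1}B(\Pi\bj,r_{\bj|_m}))$. This is because the radius at step $k$ is exactly $r_{(\sigma^k\bi)|_{n-k}}$. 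Maker's theorem therefore applies as is, and your worry about balls not being nested does not arise for similitudes.

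The genuine gap is in Step 3. The martingale (or Besicovitch differentiation) theorem gives, for a \emph{fixed} word $\bj$, that $\mu_{\mathbf p}([\bj]\cap\Pi^{-1}B(x,r))/\nu_{\mathbf p}(B(x,r))\to\mu_{\mathbf p,x}([\bj])$ as $r\to0$. It says nothing when the radius $r_n$ is tied to the length of $\bj=\bi|_n$, which is exactly the regime where ball and cylinder have comparable size. So your ``$\approx$'' does not follow from the tools you cite. It is true a posteriori, but proving it directly would require a second Maker-type argument.

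For Bernoulli measures there is a cleaner route that avoids balls entirely. Since $\Pi=\varphi_{i_1}\circ\Pi\circ\sigma$ on $[i_1]$, one has $\mathcal P\vee\Pi^{-1}\mathcal B(\R)=\mathcal P\vee\sigma^{-1}\Pi^{-1}\mathcal B(\R)$, and $\mathcal P$ is independent of $\sigma^{-1}\mathcal B(\Sigma)$. Write $\mathcal P_0^{n-1}:=\{[\bj]:\bj\in\Lambda^n\}$ and $I(\cdot\mid\cdot)$ for conditional information with respect to $\mu_{\mathbf p}$. The chain rule, this independence, and $\sigma$-invariance then give by induction
\[
-\log\mu_{\mathbf p,\Pi(\bi)}([\bi|_n])=I\bigl(\mathcal P_0^{n-1}\mid\Pi^{-1}\mathcal B(\R)\bigr)(\bi)=\sum_{k=0}^{n-1}I\bigl(\mathcal P\mid\Pi^{-1}\mathcal B(\R)\bigr)(\sigma^k\bi).
\]
Birkhoff's theorem now yields $-\frac1n\log\mu_{\mathbf p,x}([\bi|_n])\to H_{\mu_{\mathbf p}}(\mathcal P\mid\Pi^{-1}\mathcal B(\R))=h-h_\Pi$ for $\mu_{\mathbf p}$-a.e.\ $\bi$. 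Hence the same holds for $\mu_{\mathbf p,x}$-a.e.\ $\bi$ and $\nu_{\mathbf p}$-a.e.\ $x$, with no maximal inequality needed. Since $\rho$-balls are cylinders, Billingsley's lemma gives $\dimh\mu_{\mathbf p,x}=(h-h_\Pi)/\chi=\dims\nu_{\mathbf p}-\dimh\nu_{\mathbf p}$. Note that you need both inequalities of Billingsley's lemma here, not only the mass distribution principle.

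Incidentally, your numerator in Step 3 also needs no maximal inequality. Indeed, $\mu_{\mathbf p}([\bi|_n]\cap\Pi^{-1}B(x,r_n))=p_{i_1}\cdots p_{i_n}\,\nu_{\mathbf p}(B(\Pi\sigma^n\bi,1))$, and the last factor is bounded below uniformly.
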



The first assertion of Proposition~\ref{prop:fibrebound} is a direct consequence of the above two results.

\begin{proof}[Proof of Proposition~\ref{prop:fibrebound}(1)]
    Applying~\eqref{eq:fh09_2} to the natural Bernoulli measure $\mu$, we get 
    \[
        \dimh \mu_{x} = s-\dimh\nu
    \]
    for $\nu$-almost every $x\in K$. Since $\mu_{x}$ is supported on $\Pi^{-1}(x)$, $\dimh\Pi^{-1}(x) \geq s-\dimh\nu$. On the other hand, the exact dimensionality of $\nu$ implies that $\underline{d}_\nu(x)=\dimh\nu$ for $\nu$-almost every $x$. Lemma~\ref{lem:localfibreest} therefore implies that $\dimub\Pi^{-1}(x)\leq s-\dimh\nu$ for $\nu$-almost every $x\in K$. This proves~\eqref{eq:fibredim}. The two stated consequences follow immediately.
\end{proof}

Proposition~\ref{prop:fibrebound}(1) also shows that, if $\dimh\nu<s$ then the set
\[
    E_\infty := \{x\in K:\#\Pi^{-1}(x)=\infty\}
\]
has full $\nu$-measure; so $\dimh E_\infty \geq \dimh\nu$. If additionally $\dimh\nu=\dimh K$ then $\dimh E_\infty = \dimh K$.

\begin{corollary}
    If $\dimh\nu = s$, then 
    \[
        \mu(\{\omega\in\Sigma: \dimb\Pi^{-1}(\Pi(\omega))=0\}) = 1.
    \]
\end{corollary}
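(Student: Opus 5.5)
The plan is to transfer the $\nu$-almost-everywhere statement in Proposition~\ref{prop:fibrebound}(1) to a $\mu$-almost-everywhere statement on $\Sigma$, using the identity $\nu=\Pi\mu$. Concretely, I will define the set
\[
    G := \{x\in K: \dimb\Pi^{-1}(x)=0\},
\]
and observe that the set in question is exactly $\Pi^{-1}(G)$. Indeed, $\omega$ belongs to it if and only if $\Pi(\omega)\in G$. Once this identification is in place, $\mu(\Pi^{-1}(G))=\nu(G)$ by the definition of the pushforward.

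The key steps are as follows.

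\textbf{Step 1.} Under the hypothesis $\dimh\nu=s$, I will apply~\eqref{eq:nddequa}, or directly~\eqref{eq:fibredim}, which gives $\dimub\Pi^{-1}(x)\le s-\dimh\nu=0$ for $\nu$-a.e.\ $x$. Since each fibre $\Pi^{-1}(x)$ with $x\in K$ is non-empty, its lower box dimension is at least $0$. Hence the box dimension exists and equals $0$ on a set $G_0\subset G$ with $\nu(G_0)=1$.

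\textbf{Step 2.} I will then conclude that $\mu(\Pi^{-1}(G))\ge\mu(\Pi^{-1}(G_0))=\nu(G_0)=1$.

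The only point needing care, and the main obstacle, is measurability. Neither $G$ nor the set in the statement is obviously Borel. I will sidestep this by working with $G_0$, or with a Borel subset of full $\nu$-measure inside the set on which the a.e.\ statement holds. Its preimage is then a $\mu$-measurable subset of full measure contained in the target set, so the target set has full measure after completion.

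If one insists on measurability of the target set itself, I would instead argue as follows. The map $x\mapsto N_\delta(\Pi^{-1}(x))$ is upper semicontinuous in $x$ for $\delta$ fixed along a countable sequence $\delta=\underline r^{\,k}$, since fibres vary upper semicontinuously in the Hausdorff sense by compactness and continuity of $\Pi$. This makes $x\mapsto\dimub\Pi^{-1}(x)$ Borel, and hence $G$ Borel. I expect the first route to suffice, namely the interpretation via the completed measure.
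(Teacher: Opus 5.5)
Your proposal is correct and is the argument the paper intends: the corollary is Proposition~\ref{prop:fibrebound}(1), via \eqref{eq:nddequa} or \eqref{eq:fibredim}, pulled back along $\nu=\Pi\mu$, and the paper gives no further proof. Your measurability care is sound and fills in a detail the paper leaves implicit. For instance, $G_0$ can be taken to be the Borel set $\{x\in K:\underline{d}_\nu(x)=s\}$, on which Lemma~\ref{lem:localfibreest} gives the bound directly. Alternatively, use upper semicontinuity of $x\mapsto N_\delta(\Pi^{-1}(x))$: this holds because closed $\rho$-balls of radius $\delta$ are clopen cylinders $[\i]$ with $\i\in\Lambda_\delta$.
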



To turn from typical fibres to a uniform estimate valid everywhere, the main input is Shmerkin's $L^q$ dimension theorem for self-similar measures satisfying the \emph{exponential separation condition}, a concept introduced by Hochman~\cite{Hoc14}.

\begin{definition}\label{def:esc}
    The IFS $\Phi$ is said to satisfy the exponential separation condition (ESC) if there exist $\delta>0$ and a sequence $n_k\to\infty$ such that $\Delta_{n_k}\geq\delta^{n_k}$, where 
    \[
        \Delta_n := \min\{|\varphi_{\i}(0)-\varphi_{\j}(0)|: \i,\j\in\Lambda^n, \i\neq\j, \lambda_{\i}=\lambda_{\j}\}.
    \]
\end{definition}

We extract the uniform mass estimate needed below.


\begin{lemma}\label{lem:Shm19}
    Suppose $\Phi$ satisfies the ESC and set $t:=\min\{s,1\}$. Then for every $\varepsilon>0$, 
    \begin{equation}\label{eq:lemShm19}
        \nu(B(x,r)) \lesssim_\varepsilon r^{t-\varepsilon}, \quad \forall x\in \R, \forall 0<r<1.
    \end{equation}
\end{lemma}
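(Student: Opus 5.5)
The estimate follows from Shmerkin's theorem on $L^q$-dimensions of self-similar measures under exponential separation~\cite{Shm19}. Combined with the standard link between $L^q$-spectra and uniform local bounds, it yields the required bound on balls.

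\emph{Step 1: full $L^q$-dimension.} Under the ESC, Shmerkin's theorem says that for every $q>1$ the $L^q$-dimension of $\nu$ is as large as possible,
\[
D_\nu(q)=\min\Big\{\frac{\tau_{\mathbf p}(q)}{q-1},\,1\Big\},
\]
where $\tau_{\mathbf p}(q)$ is the solution $\tau$ of $\sum_i p_i^q r_i^{-\tau}=1$. Shmerkin states this for homogeneous systems; it extends to general (inhomogeneous) systems on $\R$. For the natural weights $p_i=r_i^s$ this equation becomes $\sum_i r_i^{sq-\tau}=1$, so $sq-\tau=s$ and $\tau=s(q-1)$. Hence
\[
D_\nu(q)=\min\{s,1\}=t \quad \text{for every } q>1.
\]
Since $D_\nu(q)$ is nonincreasing in $q$, we also get
\[
D_\nu(\infty)=\lim_{q\to\infty}D_\nu(q)=t.
\]

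\emph{Step 2: from $L^q$ to a uniform ball bound.} Fix $\varepsilon>0$ and pick a large $q$. Since $D_\nu(q)=t$, for all small $r$ we have
\[
\sum_{Q\in\mathcal D_r}\nu(Q)^q\le r^{(q-1)(t-\varepsilon/2)},
\]
where $\mathcal D_r$ is the family of dyadic intervals of length $r$. Any ball $B(x,r)$ is covered by at most $3$ such intervals. Bounding a single term of the sum by the whole sum gives
\[
\nu(B(x,r))\lesssim r^{(q-1)(t-\varepsilon/2)/q}.
\]
Choosing $q$ large enough that $(q-1)(t-\varepsilon/2)/q\ge t-\varepsilon$ gives~\eqref{eq:lemShm19} for all small $r$. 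For $r$ bounded away from $0$ the estimate holds trivially after enlarging the implicit constant. The bound is uniform in $x\in\R$ because the $L^q$ sum does not depend on $x$.

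\emph{Main obstacle.} The only nontrivial input is Step 1, i.e.\ applying Shmerkin's theorem to the natural measure in the generality of the paper's IFS, which may be inhomogeneous and contain reflections. I would cite the version of~\cite{Shm19} covering arbitrary self-similar measures on $\R$ with the ESC as formulated in Definition~\ref{def:esc}, where equal linear parts are compared. The rest is the routine calculation identifying $\tau_{\mathbf p}(q)/(q-1)=s$.
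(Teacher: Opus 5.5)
Your proposal is correct and follows essentially the same route as the paper. Both arguments invoke Shmerkin's $L^q$-dimension theorem for general self-similar measures (his Theorem 6.6) to get $D_\nu(q)=\min\{s,1\}$ for the natural weights, bound the mass of a single dyadic interval by the $q$-th root of the $L^q$ sum, cover $B(x,r)$ by boundedly many dyadic intervals at a scale $2^{-n}\approx r$, and then take $q$ large; your explicit computation $\tau(q)=s(q-1)$ simply spells out the identification the paper states directly.
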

\begin{proof}
    It suffices to consider $0<\varepsilon<t$. Choose $q>1$ so large that $(1-q^{-1})(t-\varepsilon/2)>t-\varepsilon$. By Shmerkin's $L^q$-dimension theorem~\cite[Theorem 6.6]{Shm19} for general self-similar measures, the dyadic $L^q$ dimension of $\nu$ equals $\min\{s,1\}=t$. Hence
    \[
        \sum_{I\in\mathcal{D}_n} \nu(I)^q \lesssim 2^{-n(q-1)(t-\varepsilon/2)},
    \]
    where $\mathcal{D}_n:=\{[\frac{k}{2^n}, \frac{k+1}{2^n}): k\in\mathbb{Z}\}$. 
    It follows that each $I\in\mathcal{D}_n$ satisfies
    \[
        \nu(I) \lesssim 2^{-n(q-1)(t-\varepsilon/2)/q} \leq 2^{-n(t-\varepsilon)},
    \]
    where the last inequality follows from the choice of $q$. Choose $n$ so that $2^{-(n+1)}<r\leq2^{-n}$. The interval $B(x,r)$ meets only a bounded number of intervals in $\mathcal D_n$, independently of $x$ and $n$. Summing the preceding estimate over these intervals proves~\eqref{eq:lemShm19}.
\end{proof}

The passage from $L^q$ estimates to uniform mass bounds, and from such bounds to global upper box estimates for fibres, is closely related to~\cite[Lemmas 1.7 and 1.8]{Shm19}. We use self-similarity to obtain the local covering estimate needed for an Assouad-dimension bound. Our proof is also motivated by ongoing joint work of the first author with Kong, Li, and Ma~\cite{KLMS26+}. 
Recall that the Assouad dimension of $E\subset\Sigma$ is the infimum of all $\alpha>0$ for which there is a constant $C$ such that $N_r(E\cap B_\rho(\omega,R))\leq C(R/r)^\alpha$ for every $\omega\in E$ and all $0<r<R<1$. 

\begin{lemma}
    Let $\beta:=\max\{s-1,0\}$. If $\Phi$ satisfies the ESC, then for every $\varepsilon>0$ there exists a constant $C_\varepsilon>0$ such that
    \[
        N_r\big( \Pi^{-1}(x)\cap B_\rho(\omega,R) \big) \leq C_\varepsilon \Big(\frac{R}{r}\Big)^{\beta+\varepsilon}
    \]
    for every $x\in K$, every $\omega\in\Pi^{-1}(x)$ and all $0<r<R<1$.
\end{lemma}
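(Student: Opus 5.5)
Our plan is to reduce to the root-scale estimate \eqref{eq:stopfibrecount} by self-similarity, and then feed in the uniform mass bound of Lemma~\ref{lem:Shm19}. The ball $B_\rho(\omega,R)$ in the ultrametric space $\Sigma$ is a cylinder $[\bj]$ with $\bj$ the prefix of $\omega$ satisfying $r_{\bj}\approx R$. More precisely, $\bj$ is the prefix of $\omega$ in $\Lambda_R$, possibly one symbol shorter, and in either case $R\underline{r}\lesssim r_{\bj}\le R$ up to a factor $\underline r$.

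Every $\bi\in\Pi^{-1}(x)\cap[\bj]$ has the form $\bj\bi'$ with
\[
    \Pi(\bi')=y:=\varphi_{\bj}^{-1}(x)\in K .
\]
Conversely, each $\bi'\in\Pi^{-1}(y)$ gives an element $\bj\bi'$ of $\Pi^{-1}(x)\cap[\bj]$. Thus
\[
    \Pi^{-1}(x)\cap B_\rho(\omega,R)=\bj\,\Pi^{-1}(y).
\]
The map $\bi'\mapsto\bj\bi'$ scales $\rho$ exactly by $r_{\bj}$. Hence an $r$-cover of the left side corresponds to a cover of $\Pi^{-1}(y)$ at scale $\delta:=r/r_{\bj}\approx r/R$, and
\[
    N_r\big(\Pi^{-1}(x)\cap B_\rho(\omega,R)\big)=N_{\delta}\big(\Pi^{-1}(y)\big),
\]
up to replacing $\delta$ by a comparable value. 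If $\delta\ge 1$ (or $\delta$ bounded below), the count is $O(1)$ and there is nothing to prove.

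For $\delta<1$ we apply \eqref{eq:stopfibrecount} at the point $y$:
\[
    N_\delta(\Pi^{-1}(y))\lesssim \delta^{-s}\nu(B(y,3\delta)).
\]
Lemma~\ref{lem:Shm19} gives $\nu(B(y,3\delta))\lesssim_\varepsilon \delta^{t-\varepsilon}$ uniformly in $y$, where $t=\min\{s,1\}$. Combining these,
\[
    N_\delta(\Pi^{-1}(y))\lesssim_\varepsilon \delta^{-(s-t)-\varepsilon}=\delta^{-(\beta+\varepsilon)},
\]
since $s-t=\max\{s-1,0\}=\beta$. Substituting $\delta\approx r/R$ yields the claimed bound $C_\varepsilon(R/r)^{\beta+\varepsilon}$. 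The constants come only from $\underline r$, the constant in \eqref{eq:stopfibrecount}, and the one in Lemma~\ref{lem:Shm19}, so they are uniform in $x$, $\omega$, $r$ and $R$.

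The main point requiring care is this uniformity. The implicit constant in \eqref{eq:stopfibrecount} must be independent of the point; it is, since it depends only on $\underline r$ and the covering $K_{\i}\subset B(x,3\delta)$. The constant in Lemma~\ref{lem:Shm19} must also be independent of the centre, and this is exactly the strength of Shmerkin's theorem. We also need to treat the radius conventions, namely closed versus open $\rho$-balls and stopping sets, so that $r_{\bj}$ and $R$ differ by at most a factor $\underline r$; this only changes constants. The Assouad bound in Proposition~\ref{prop:fibrebound}(2) then follows by letting $\varepsilon\to0$.
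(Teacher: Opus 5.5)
Your proposal is correct and is essentially the paper's proof: take the prefix $\i$ of $\omega$ in $\Lambda_R$, use $\Pi^{-1}(x)\cap[\i]=\i\,\Pi^{-1}(\varphi_{\i}^{-1}(x))$ and the exact scaling $\rho(\i\eta,\i\eta')=r_{\i}\rho(\eta,\eta')$, then combine \eqref{eq:stopfibrecount} with the centre-uniform bound of Lemma~\ref{lem:Shm19}. One harmless correction: $B_\rho(\omega,R)$ is never the cylinder of a prefix one symbol shorter than $\i$, because points of $[\i^-]\setminus[\i]$ lie at distance $r_{\i^-}>R$ from $\omega$, so you can simply use $B_\rho(\omega,R)\subset[\i]$ with $r_{\i}\le R$, as the paper does.
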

\begin{proof}
    Let $t=\min\{s,1\}$ be as in Lemma~\ref{lem:Shm19}. So $s-t=\max\{s-1,0\}=\beta$. By~\eqref{eq:stopfibrecount} and~\eqref{eq:lemShm19}, for every $x\in K$ and $0<\delta<1$ we have (uniformly in $x\in K$) 
    \begin{equation}\label{eq:unilocalest}
        N_\delta(\Pi^{-1}(x)) \lesssim \delta^{-s}\nu(B(x,3\delta)) \lesssim_\varepsilon \delta^{-s}\delta^{t-\varepsilon} = \delta^{-(\beta+\varepsilon)}.
    \end{equation}
    To localize the estimate, fix $x\in K$, $\omega\in\Pi^{-1}(x)$, and $0<r<R<1$. Let $\i$ be the unique prefix of $\omega$ in $\Lambda_R$. So $B_\rho(\omega,R)\subset [\i]$. Since $x=\Pi(\omega)\in K_{\i}$, the point $y:=\varphi_{\i}^{-1}(x)$ belongs to $K$, and
    \begin{equation}\label{eq:piinvxcapi}
        \Pi^{-1}(x)\cap[\i]=\{\i\eta: \eta\in\Pi^{-1}(y)\}
    \end{equation}
    Note that 
    \begin{equation}\label{eq:prefixsimilarity}
        \rho(\i\eta, \i\eta') = r_{\i}\rho(\eta,\eta'), \quad \forall \eta,\eta'\in\Sigma.
    \end{equation}
    In particular, the cylinder $[\i]$ has $\rho$-diameter $r_{\i}$. 
    
    If $r\geq r_{\i}$, then $N_r(\Pi^{-1}(x)\cap[\i])=1\leq (R/r)^{\beta+\varepsilon}$. If $r<r_{\i}$ then
    \begin{align*}
        N_r( \Pi^{-1}(x)\cap [\i] ) &= N_{r/r_{\i}}(\Pi^{-1}(y)) && \text{(by~\eqref{eq:piinvxcapi}) and~\eqref{eq:prefixsimilarity}} \\
        &\lesssim_\varepsilon \Big( \frac{r_{\i}}{r} \Big)^{\beta+\varepsilon} && \text{(by~\eqref{eq:unilocalest})} \\
        &\leq \Big( \frac{R}{r} \Big)^{\beta+\varepsilon} && \text{(since $\i\in\Lambda_R$)}
    \end{align*}
    and the lemma is proved.
\end{proof}

Since the above estimate holds for every $\varepsilon>0$, it follows that
\[
    \dim_{\textup{A}} \Pi^{-1}(x) \leq\beta=\max\{s-1,0\} \quad\text{for every } x\in K.
\]
This settles Proposition~\ref{prop:fibrebound}(2).

\subsection{Measure-theoretic separation}

Here we prove the measure-theoretic separation and density statements from Proposition~\ref{thm:singularpieces} separately.

\begin{proof}[Proof of Proposition~\ref{thm:singularpieces}(1)]
    Fix any probability vector $\mathbf{p}=(p_i)_{i\in\Lambda}$ with $p_i>0$. Since $\mu_{\mathbf{p}}$ is Bernoulli, $\mathcal{P}$ is independent of $\sigma^{-1}\Pi^{-1}\mathcal{B}(\R)$. So 
    \[
        H_{\mu_{\mathbf{p}}}(\mathcal{P}\mid\sigma^{-1}\Pi^{-1}\mathcal{B}(\R)) = H_{\mu_{\mathbf{p}}}(\mathcal{P}) = -\sum_{i\in\Lambda} p_i\log p_i.
    \]
    The assumption $\dimh\nu_{\mathbf{p}}=\dims\nu_{\mathbf{p}}$ together with the formula~\eqref{eq:fh09_1} then implies
    \begin{equation}\label{eq:H=0}
        H_{\mu_{\mathbf{p}}}(\mathcal{P}\mid\Pi^{-1}\mathcal{B}(\R)) = 0.
    \end{equation}
    For $i\in\Lambda$, define $\nu_i:=p_i\varphi_i\nu_{\mathbf{p}}$. Then $\nu_{\mathbf{p}}=\sum_{i\in\Lambda} \nu_i$ and hence $\nu_i\ll\nu_{\mathbf{p}}$. Let $f_i:=\frac{\rd\nu_i}{\rd\nu_{\mathbf{p}}}$. Then $f_i\geq 0$ and $\sum_{i\in\Lambda} f_i=1$ $\nu_{\mathbf{p}}$-almost everywhere.

    We claim that for every $i$,
    \[
        f_i(\Pi(\omega)) = \mathbb{E}_{\mu_{\mathbf{p}}}\bigl( \mathbbm{1}_{[i]}\mid\Pi^{-1}\mathcal{B}(\R^d) \bigr)(\omega), \quad \mu_{\mathbf{p}}\text{-a.e. } \omega\in\Sigma.
    \]
    To this end, it suffices to show that 
    \[
        \int_{\Pi^{-1}(A)} f_i\circ\Pi \rd\mu_{\mathbf{p}} = \int_{\Pi^{-1}(A)} \mathbbm{1}_{[i]} \rd\mu_{\mathbf{p}}, 
    \]
    where $A\subset\R$ is Borel. For such $A$,
    \begin{align*}
        \int_{\Pi^{-1}(A)} \mathbbm{1}_{[i]} \rd\mu_{\mathbf{p}} &= \mu_{\mathbf{p}}([i]\cap \Pi^{-1}(A)) \\
        &= \mu_{\mathbf{p}}(\{\omega: \omega=i\tau \text{ for some } \tau\in\Sigma \text{ and } \Pi(\omega)\in A\}) \\
        &= p_i\mu_{\mathbf{p}}(\{\tau\in\Sigma: \varphi_i(\Pi(\tau))\in A\}) \\
        &= p_i\mu_{\mathbf{p}}\bigl( \Pi^{-1}\varphi_i^{-1}(A) \bigr) \\
        &= p_i\varphi_i\nu_{\mathbf{p}}(A) \\ 
        &= \nu_i(A) \\
        &= \int_A f_i \rd\nu_{\mathbf{p}} \\
        &= \int_{\Pi^{-1}(A)} f_i\circ\Pi \rd\mu_{\mathbf{p}},
    \end{align*}
    which proves the claim.

    By~\eqref{eq:H=0} and the definition of conditional entropy, we obtain
    \begin{align*}
        0 &= H_{\mu_{\mathbf{p}}}(\mathcal{P}\mid\Pi^{-1}\mathcal{B}(\R)) \\
        &= -\sum_{i\in\Lambda} \int \mathbb{E}_{\mu_{\mathbf{p}}}\big( \mathbbm{1}_{[i]}\mid\Pi^{-1}\mathcal{B}(\R) \big)\log \mathbb{E}_{\mu_{\mathbf{p}}}\big( \mathbbm{1}_{[i]}\mid\Pi^{-1}\mathcal{B}(\R) \big) \rd\mu_{\mathbf{p}} \\
        &= -\sum_{i\in\Lambda} \int_\Sigma (f_i\circ\Pi)\log (f_i\circ\Pi) \rd\mu_{\mathbf{p}} \\
        &= \int_K -\sum_{i\in\Lambda} f_i\log f_i \rd\nu_{\mathbf{p}}.
    \end{align*}
    The integrand is non-negative, so it vanishes $\nu_{\mathbf{p}}$-almost everywhere.
    Since $(f_i(x))_{i\in\Lambda}$ is a probability vector, its entropy vanishes if and only if it is a vertex of the probability simplex. Consequently, for $\nu_{\mathbf{p}}$-almost every $x$ there exists a unique index $i=i(x)$ such that $f_i(x)=1$ and $f_j(x)=0$ for all $j\neq i$. Define 
    \[
        B_i:=\{x\in K: f_i(x)=1 \text{ and } f_j(x)=0 \text{ for } j\neq i\}. 
    \]
    Since every $f_i$ is Borel, the sets $B_i$ are Borel and pairwise disjoint, and their union has full $\nu_{\mathbf{p}}$-measure. Moreover, 
    \[
        \nu_i(B_i^c) = \int_{B_i^c} f_i\rd\nu_{\mathbf{p}} = 0. 
    \]
    So $\nu_i$ is concentrated on $B_i$ and $\nu_i\perp\nu_j$ when $i\neq j$. Finally, since multiplication by a positive scalar preserves singularity, $\varphi_i\nu_{\mathbf{p}}\perp\varphi_j\nu_{\mathbf{p}}$ whenever $i\neq j$.
\end{proof}

To prove the quantitative density estimate in Proposition~\ref{thm:singularpieces}(2), note that there exists a uniform constant $c_0>0$ such that
\begin{equation}\label{eq:uniforssmass}
    \nu(B(z,r)) \geq c_0r^s, \quad \forall z\in K, 0<r<1
\end{equation}
because every such ball contains a cylinder set (along a coding of $z$) whose diameter is at most $r$. The role of~\eqref{eq:uniforssmass} is that every additional first level piece containing a point contributes a definite amount to its normalized local mass.

\begin{proof}[Proof of Proposition~\ref{thm:singularpieces}(2)]
    For $y\in\R$ and $r>0$, write $d(y,r):=\frac{\nu(B(y,r))}{r^s}$. Let $\bi=i_1i_2\cdots\in\Sigma$ and let $x=\Pi(\bi)$. For any $j\in\Lambda$, if $x\in K_j$ then $\varphi_{j}^{-1}(x)\in K$ and~\eqref{eq:uniforssmass} gives 
    \begin{equation}
        \nu(B(\varphi_j^{-1}(x),r))\geq c_0r^s, \quad 0<r<1.
    \end{equation}
    So for all small $\delta>0$, the self-similarity of $\nu$ gives 
    \begin{align*}
        d(\Pi(\bi),\delta) 
        &= \frac{\sum_{j\in\Lambda} r_j^s\nu(B(\varphi_j^{-1}(x), \delta/r_{j}))}{\delta^s} \\
        &= \frac{r_{i_1}^s}{\delta^s}\cdot\nu(B(\varphi_{i_1}^{-1}(x),\delta/r_{i_1})) + \sum_{j\in\Lambda\setminus\{i_1\}} \frac{r_j^s}{\delta^s}\cdot \nu(B(\varphi_j^{-1}x, \delta/r_j)) \\
        &\geq d(\varphi_{i_1}^{-1}(x),\delta/r_{i_1}) + c_0\#\{j\neq i_1: x\in K_j\} \\
        &\geq d(\Pi(\sigma\bi),\delta/r_{i_1}) + c_0\mathbbm{1}_{\mathcal{C}}(\Pi(\bi)).
    \end{align*} 
    Taking $\delta=r_{i_1\cdots i_n}\cdot\min_i r_i$ and iterating, we obtain 
    \[
        d(\Pi(\bi), r_{i_1\cdots i_n}) \geq c_0\sum_{k=0}^{n-1} \mathbbm{1}_{\mathcal{C}}(\Pi(\sigma^k\bi)).
    \]
    By Birkhoff's ergodic theorem, for $\mu$-almost every $\bi\in\Sigma$, 
    \[
        \lim_{n\to\infty} \frac{1}{n}\sum_{k=0}^{n-1} \mathbbm{1}_{\mathcal{C}}(\Pi(\sigma^k\bi)) = \mu(\Pi^{-1}(\mathcal{C})) = \nu(\mathcal{C}) > 0.
    \]
    So there exists a constant $c_1>0$ such that for $\mu$-almost every $\bi\in\Sigma$, 
    \begin{equation}\label{eq:Birkhoffest}
        \sum_{k=0}^{n-1} \mathbbm{1}_{\mathcal{C}}(\Pi(\sigma^k\bi)) \geq c_1n \quad \text{ for all large } n.
    \end{equation}

    Fix any such $\bi\in\Sigma$ and let $0<r<1$ be small. Pick $n$ with $r_{i_1\cdots i_{n}}<r\leq r_{i_1\cdots i_{n-1}}$ (so $r\approx r_{i_1\cdots i_n}$). By monotonicity of balls and the previous observations,
    \[
        \nu(B(\Pi(\bi),r)) \geq \nu(B(\Pi(\bi), r_{i_1\cdots i_{n}})) \geq c_0c_1 nr_{i_1\cdots i_n}^s \gtrsim nr^s.
    \] 
    Since the contraction ratios range over a finite set,
    \[
        \log r^{-1} \approx \log r_{i_1\cdots i_n}^{-1} = \sum_{k=1}^n \log r_{i_k}^{-1} \approx n.
    \]
    We conclude that $\nu(B(\Pi(\bi),r))\gtrsim r^s\log r^{-1}$ for all sufficiently small $r$, which completes the proof.
\end{proof}

\subsection{A full-dimensional critical set under the ESC}

Consider the IFS 
\[
    \Phi = \Big\{ \varphi_0(x) = \frac{x}{4}, \varphi_u(x)=\frac{x}{4}+u, \varphi_1(x)=\frac{x}{4}+1 \Big\},
\]
where $u:=\sum_{k=0}^\infty 4^{-2^k}$ is an irrational number. Let $K$ be its attractor and let $s:=\frac{\log 3}{\log 4}$ be its similarity dimension. We prove the three claims in Proposition~\ref{exa:escwithnddcri} separately.

\begin{lemma}
    The IFS $\Phi$ satisfies the ESC.
\end{lemma}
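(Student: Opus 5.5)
The plan is to compute $\Delta_n$ directly. Since the IFS is homogeneous with ratio $1/4$, every pair $\i,\j\in\Lambda^n$ has $\lambda_{\i}=\lambda_{\j}=4^{-n}$. Then
\[
    \varphi_{\i}(0)-\varphi_{\j}(0)=\sum_{k=1}^n (a_{i_k}-a_{j_k})4^{-(k-1)},
\]
where $a_0=0$, $a_u=u$ and $a_1=1$. Each difference $a_{i_k}-a_{j_k}$ has the form $\alpha_k+\beta_k u$ with $\alpha_k,\beta_k\in\{-1,0,1\}$; more precisely it lies in $\{0,\pm u,\pm 1,\pm(1-u)\}$. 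Hence
\[
    \varphi_{\i}(0)-\varphi_{\j}(0)=A+Bu,
\]
where $A=\sum_k \alpha_k4^{-(k-1)}$ and $B=\sum_k\beta_k4^{-(k-1)}$. Both $A$ and $B$ lie in $4^{-(n-1)}\mathbb{Z}$ and satisfy $|A|,|B|\le 2$. Write $P=4^{n-1}A$ and $Q=4^{n-1}B$, which are integers of size $\lesssim 4^n$.

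The proof then splits into two cases. If $Q=0$, then $P\neq 0$. This holds because $\i\neq\j$ and the map $(\alpha,\beta)\mapsto$ symbol difference is injective, so not all pairs $(\alpha_k,\beta_k)$ vanish. By uniqueness of base-$4$ representations with digits in $\{-1,0,1\}$, nonzero digits give a nonzero sum (the first nonzero digit dominates, since $\sum_{j\geq1}4^{-j}=1/3<1$). Thus $|A+Bu|=|A|\ge 4^{-(n-1)}$. If $Q\neq0$, we need a lower bound of the form $|P+Qu|\ge c\,4^{-Cn}$, i.e.\ a Diophantine estimate on $u$ against rationals with denominator $|Q|\lesssim 4^n$.

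The main step is this Diophantine estimate, and there is no need to prove it for every $n$, since the ESC only asks for a subsequence $n_k$. Take $n$ of the form $n=2^m$, approximately. Split $u=u_m+t_m$, where $u_m=\sum_{k\le m}4^{-2^k}\in 4^{-2^m}\mathbb{Z}$ and the tail satisfies $t_m\approx 4^{-2^{m+1}}$. Then
\[
    4^{2^m}(P+Qu)=\bigl(4^{2^m}P+Q\,4^{2^m}u_m\bigr)+Q\,4^{2^m}t_m .
\]
The first bracket is an integer. The error term has size at most $|Q|\,4^{2^m}4^{-2^{m+1}}\lesssim 4^{n}4^{-2^m}$. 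If we choose $n\le 2^{m}-c$, this error is less than $1/2$. Hence either the integer bracket is nonzero, giving $|P+Qu|\gtrsim 4^{-2^m}\ge 4^{-2n}$; or it is zero, in which case $|P+Qu|=|Q|t_m\ge t_m\gtrsim 4^{-2^{m+1}}\approx 4^{-4n}$.

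So along $n_m:=2^m-c$ (for a suitable constant $c$) we obtain $\Delta_{n_m}\ge 4^{-5n_m}\cdot$const, which gives the ESC with $\delta$ slightly smaller than $4^{-5}$. The only care needed is to confirm that in the $Q\neq0$ case the vanishing integer bracket still leaves $|Q|t_m$ bounded below. This is immediate because $|Q|\ge1$ and $t_m\ge 4^{-2^{m+1}}$.
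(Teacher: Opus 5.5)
Your proof is correct. It shares the paper's skeleton: write $\varphi_{\i}(0)-\varphi_{\j}(0)=4^{-n+1}(P+Qu)$ with integers $|P|,|Q|<4^n$, dispose of $Q=0$ by the balanced base-$4$ digit argument, and reduce $Q\neq 0$ to a Diophantine property of $u$. The two proofs differ only in that last step.

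\emph{The paper's route.} It first proves the irrationality-measure bound $|u-p/q|>1/(32q^3)$ for all rationals. It compares $p/q$ with the truncation $p_m/q_m$, $q_m=4^{2^m}$, where $m$ is chosen so that $q_{m-1}\le 4q<q_m$. The two fractions are distinct because $p_m\equiv 1 \pmod 4$, so $p_m/q_m$ is reduced. This yields $\Delta_n\ge \frac18\cdot 64^{-n}$ for \emph{every} $n$, with an explicit $\delta$.

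\emph{Your route.} You clear the denominator $4^{2^m}$ directly and use an integer-or-tail dichotomy. The possible vanishing of the integer part is handled by the lower bound $t_m\ge 4^{-2^{m+1}}$ rather than by reducedness. This is more elementary and self-contained: no general Diophantine lemma and no arithmetic of $p_m$ are needed. The cost is a worse exponent and, as written, only a subsequence of $n$, which is all the ESC requires.

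Three small remarks:
\begin{enumerate}
\item With $n=2^m-c$ one has $4^{-2^{m+1}}=4^{-2n-2c}$, not $4^{-4n}$. Your exponents $4^{-4n}$ and $4^{-5n}$ are therefore just pessimistic, which is harmless.
\item Since $|Q|\le 4^n/3<4^{2^m}$ and $p_m=4^{2^m}u_m$ is odd, the vanishing case $4^{2^m}P+Qp_m=0$ cannot actually occur. This is exactly the paper's reducedness observation, and your tail bound is a valid substitute for it.
\item Taking $m$ minimal with $2^m\ge n$ keeps the error term below $\frac{16}{45}<\frac12$, so your argument in fact works for every $n$.
\end{enumerate}
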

\begin{proof}
    For $m\geq 0$, write $\sum_{k=0}^m 4^{-2^k}=\frac{p_m}{q_m}$, where $q_m:=4^{2^m}$ and 
    \[
        p_m = \sum_{k=0}^{m} 4^{2^m-2^k} \equiv 1 \pmod 4.
    \]
    So $\frac{p_m}{q_m}$ is reduced. Moreover,
    \begin{equation}\label{eq:u-um_est}
        0< u-\frac{p_m}{q_m} = \sum_{k=m+1}^\infty 4^{-2^k} < \sum_{n=1}^\infty (4^{-2^{m+1}})^n = \frac{q_{m+1}^{-1}}{1-q_{m+1}^{-1}} \leq \frac{16}{15}q_{m+1}^{-1} < \frac{2}{q_m^2}.
    \end{equation}

    We claim that
    \begin{equation}\label{eq:claiminexa_esc}
        \Big| u-\frac{p}{q} \Big| > \frac{1}{32q^3}, \quad \forall p\in\mathbb{Z}, q\in\mathbb{Z}^+.
    \end{equation}
    It suffices to prove this for reduced $p/q$, since reducing the denominator only makes the claimed lower bound larger. Choose $m\geq 1$ with $q_{m-1}\leq 4q<q_m$. Since $\frac{p_m}{q_m}$ and $\frac{p}{q}$ are reduced and $q_m>q$, they are distinct fractions. Therefore 
    \[
        \Big| \frac{p}{q}-\frac{p_m}{q_m} \Big| \geq \frac{1}{qq_m}.
    \]
    Using~\eqref{eq:u-um_est} and $q_m=q^2_{m-1}$ (so $q_{m}\leq 16q^2$), we get
    \begin{align*}
        \Big| u-\frac{p}{q} \Big| \geq \Big| \frac{p}{q}-\frac{p_m}{q_m} \Big| - \Big| u-\frac{p_m}{q_m} \Big| 
        > \frac{1}{qq_m} - \frac{2}{q_m^2} 
        > \frac{1}{2qq_m} 
        \geq \frac{1}{32q^3},
    \end{align*}
    as desired.

    Let $n\geq 1$ and take two distinct words $\i=i_1\cdots i_n$, $\j=j_1\cdots j_n\in\{0,u,1\}^n$. Their images differ by $\varphi_{\i}(0) - \varphi_{\j}(0) = \sum_{k=1}^n (i_k-j_k)4^{-k+1}$. One can write $i_k-j_k=\alpha_k+\beta_ku$, where $\alpha_k, \beta_k\in\{0,\pm 1\}$. Multiplying by $4^{n-1}$, we obtain 
    \begin{equation}\label{eq:exa_esc_distance}
        \varphi_{\i}(0) - \varphi_{\j}(0) = 4^{-n+1}(A+Bu),
    \end{equation}
    where $A:=\sum_{k=1}^n \alpha_k 4^{n-k}$ and $B:=\sum_{k=1}^n \beta_k4^{n-k}$ are both integers with absolute value no more than $\sum_{k=0}^{n-1}4^k<4^n$. 

    Note that exact overlaps never occur. Indeed, if $A+Bu=0$, then since $u$ is irrational, $A=B=0$. But if $A=0$ then all $\alpha_k$ must vanish. Similarly $B=0$ forces all $\beta_k=0$. Hence $i_k=j_k$ for all $k$, so $\i=\j$.

    Let us estimate~\eqref{eq:exa_esc_distance}. If $B\neq 0$, then~\eqref{eq:claiminexa_esc} with $q=|B|$ gives
    \[
        |A+Bu| = |B|\Big| u+\frac{A}{B} \Big| > \frac{1}{32|B|^2} > \frac{1}{32\cdot 4^{2n}}
    \]
    and thus $|\varphi_{\i}(0) - \varphi_{\j}(0)|>\frac{1}{8}\cdot 4^{-3n}$. If $B=0$, then $|A|\geq 1$ and $|\varphi_{\i}(0) - \varphi_{\j}(0)| = 4^{-n+1}|A| \geq 4^{-n+1}$. Thus for all $n\ge1$ and all distinct words $\i,\j$ of length $n$,
    \[
        |\varphi_{\i}(0)-\varphi_{\j}(0)| \geq \frac{1}{8}\cdot 4^{-3n} = \frac{1}{8}\cdot 64^{-n}.
    \]
    So $\Phi$ satisfies the ESC.
\end{proof}

\begin{lemma}
    We have $\dimh (\varphi_0(K)\cap\varphi_u(K)) = s = \dimh K$.
\end{lemma}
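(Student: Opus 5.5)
The plan is to reduce the intersection to an explicit digit-restricted subset of $K$, and then bound its dimension from below with a Frostman-type estimate that comes from the ESC via Lemma~\ref{lem:Shm19}. The upper bound is immediate: $\varphi_0(K)\cap\varphi_u(K)\subset K$ and $\dimh K\leq s$, the similarity dimension, by the standard covering by cylinders. So it suffices to prove $\dimh(\varphi_0(K)\cap\varphi_u(K))\geq s$; this lower bound also yields $\dimh K=s$.

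\emph{Step 1: an explicit subset of the intersection.} Every point of $K$ has the form $\sum_{k\geq1} d_k4^{-k+1}$ with $d_k\in\{0,u,1\}$. A point $x$ lies in $\varphi_0(K)\cap\varphi_u(K)$ if and only if $x=y/4=z/4+u$ for some $y,z\in K$, that is, $y-z=4u$. Now
\[
    4u=\sum_{j\geq0}4^{1-2^j}=\sum_{j\geq0}4^{-(k_j-1)}, \qquad k_j:=2^j .
\]
Let $F:=\{2^j:j\geq0\}$. Consider all $\omega\in\{0,u,1\}^{\mathbb N}$ with $\omega_k=1$ for $k\in F$ and $\omega_k$ free for $k\notin F$. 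Let $\omega'$ agree with $\omega$ off $F$ and have $\omega'_k=0$ on $F$. Then $\Pi(\omega)-\Pi(\omega')=4u$. Hence $\frac14\Pi(E)\subset\varphi_0(K)\cap\varphi_u(K)$, where $E:=\{\omega\in\Sigma:\omega_k=1\ \forall k\in F\}$. It therefore suffices to show $\dimh\Pi(E)\geq s$.

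\emph{Step 2: comparison of measures.} Let $\mu_E$ be the measure on $E$ that is uniform (weight $1/3$ each) on the free coordinates. For a word $\i$ of length $n$ compatible with $E$, we have $\mu_E([\i])=3^{-(n-\#(F\cap[1,n]))}$. Since $\#(F\cap[1,n])\leq\log_2n+1$, this gives $\mu_E([\i])\leq 3n^{\log_23}\mu([\i])$, where $\mu$ is the natural Bernoulli measure with weights $1/3$. Put $\eta:=\Pi\mu_E$ and fix $r\in(0,1)$. Choose $n$ with $4^{-n}\approx r$. Every level-$n$ cylinder $K_{\i}$ meeting $B(x,r)$ has diameter $\lesssim r$, so it lies in $B(x,Cr)$. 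Summing over such cylinders gives
\[
    \eta(B(x,r))\leq\sum_{\i:\,K_{\i}\cap B(x,r)\neq\varnothing}\mu_E([\i])\lesssim n^{\log_23}\,\nu(B(x,Cr)).
\]

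\emph{Step 3: the Frostman bound.} The previous lemma shows that $\Phi$ satisfies the ESC, and here $s<1$. Lemma~\ref{lem:Shm19} then gives $\nu(B(x,Cr))\lesssim_\varepsilon r^{s-\varepsilon}$ uniformly in $x$. Since $n\approx\log(1/r)$, the polynomial factor is absorbed, and we get $\eta(B(x,r))\lesssim_\varepsilon r^{s-2\varepsilon}$ for all $x$ and $r$. The mass distribution principle then yields $\dimh\Pi(E)\geq s-2\varepsilon$ for every $\varepsilon>0$, which finishes the proof.

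The main point needing care is Step 2. The projection $\Pi$ is not injective, and the IFS does not satisfy the OSC, so $\dimh\Pi(E)$ cannot be read off from symbolic dimension. This is why I route the argument through the uniform bound on $\nu$ coming from Shmerkin's theorem, and why I need the lacunarity of $F$: it keeps the density ratio $\mu_E/\mu$ subexponential, namely polynomial in $n$.
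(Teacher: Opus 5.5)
Your proposal is correct and follows essentially the same route as the paper. You use the same digit-restricted set (your $E$ is the paper's $\Sigma_I$, with the symbol $1$ forced at positions $2^k$), the same product measure uniform on the free coordinates, and the same cylinder-level comparison with the natural measure, which loses only the factor $3^{M_n}\lesssim n^{\log_2 3}$. You then apply the same uniform Frostman bound from Lemma~\ref{lem:Shm19}, using the ESC and $s<1$, followed by the mass distribution principle. The only cosmetic difference is that you compare masses word by word, whereas the paper counts $\#\mathcal{W}_n(x,r)$.
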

\begin{proof}
    Let $I:=\{2^k: k\geq 0\}$ and define 
    \[
        \Sigma_I := \{\omega = \omega_1\omega_2\cdots\in\{0,u,1\}^{\mathbb{N}}: \omega_k=1 \text{ if } k\in I\}.
    \]
    Let $M_n:=\#(I\cap\{1,\ldots,n\})=\lfloor \log_2n \rfloor+1$. Note that there are $3^{n-M_n}$ admissible prefixes of length $n$ in $\Sigma_I$.

    For $\omega\in\Sigma_I$, since $\omega_k=1$ when $k\in I$, 
    \begin{align*}
        \varphi_0(\Pi(\omega)) = \frac{1}{4}\Pi(\omega) =  \sum_{k=1}^\infty \omega_k4^{-k} = \sum_{k\notin I} \omega_k4^{-k} + \sum_{k\in I} 4^{-k} = \sum_{k\notin I} \omega_k4^{-k} + u.
    \end{align*}
    Define $y:=\sum_{k=1}^\infty \eta_k4^{-k+1}$, where $\eta_k=0$ if $k\in I$ and $\eta_k=\omega_k$ otherwise. Then $y\in K$ and 
    \[
        \varphi_0(\Pi(\omega)) = \frac{1}{4}y+u \in\varphi_u(K).
    \]
    Thus $\varphi_0(\Pi(\Sigma_I))\subset\varphi_0(K)\cap\varphi_u(K)$ and it suffices to prove that 
    \[
        \dimh\Pi(\Sigma_I)=s.
    \] 

    To this end, fix any $0<t<s$ and pick $\alpha$ with $t<\alpha<s$. Let $\mu$ be the uniform Bernoulli measure on $\Sigma=\{0,u,1\}^{\mathbb{N}}$ and let $\nu=\Pi\mu$. Since $\Phi$ satisfies the ESC and $s<1$, we can find by Lemma~\ref{lem:Shm19} (or the Frostman bound derived therefrom; see~\cite[Theorem 6.2]{Shm19}) a constant $C_\alpha$ such that
    \begin{equation}\label{eq:exa_escfrostman}
        \nu(B(x,r)) \leq C_\alpha r^\alpha, \quad \forall x\in\R, 0<r\leq 1.
    \end{equation}
    Construct a product measure $\tilde{\mu}$ on $\Sigma_I$ by placing mass $\delta_1$ on coordinates $k\in I$ and the uniform distribution on $\{0,u,1\}$ otherwise. Then $\tilde{\nu}:=\Pi\tilde{\mu}$ is supported on $\Pi(\Sigma_I)$.

    Let $x\in\R$ and let $0<r<1$. Pick $n$ with $\frac{4}{3}4^{-n}\leq r<\frac{4}{3}4^{-n+1}$, where $\frac{4}{3}$ is simply the diameter of $K$. Let 
    \[
        \mathcal{W}_n(x,r) := \{\i\in\{0,u,1\}^n: \Pi([\i]\cap\Sigma_I) \cap B(x,r) \neq \varnothing\}.
    \]
    For every admissible prefix $\i$ of length $n$, $\tilde{\mu}([\i])=3^{-(n-M_n)}$. So  
    \begin{equation}\label{eq:exa_esc_newmass}
        \tilde{\nu}(B(x,r)) \leq \#\mathcal{W}_n(x,r) \cdot 3^{-(n-M_n)}.
    \end{equation}
    On the other hand, if $\i\in\mathcal{W}_n(x,r)$ then $\Pi([\i])\cap B(x,r)\neq\varnothing$. Since $\Pi([\i])$ has diameter $\frac{4}{3}4^{-n}\leq r$, $\Pi([\i])\subset B(x,2r)$. Therefore
    \begin{equation}\label{eq:exa_esc_mass}
        \nu(B(x,2r)) \geq \mu\Big( \bigcup_{\i\in\mathcal{W}_n(x,r)} [\i] \Big) = \#\mathcal{W}_n(x,r)\cdot 3^{-n}.
    \end{equation}
    Combining~\eqref{eq:exa_escfrostman}, ~\eqref{eq:exa_esc_newmass}, ~\eqref{eq:exa_esc_mass} and $M_n\leq\log_2n+1$, we get 
    \[
        \tilde{\nu}(B(x,r)) \leq 3^{M_n}\nu(B(x,2r)) \lesssim_\alpha 3^{M_n}r^\alpha \lesssim n^{\log_23}r^\alpha = n^{\log_2 3}r^{\alpha-t}r^t. 
    \]
    Because $r\approx 4^{-n}$, the factor $n^{\log_2 3}r^{\alpha-t}$ is bounded uniformly in $n$.  
    Hence
    \[
        \tilde{\nu}(B(x,r)) \lesssim_{\alpha,t} r^t, \quad \forall 0<r<1,
    \]
    with a constant independent of $x$. By Frostman's lemma, $\dimh\Pi(\Sigma_I)\geq t$. Since $t<s$ was arbitrary, $\dimh\Pi(\Sigma_I)\geq s$. The reverse inequality follows from $\Pi(\Sigma_I)\subset K$ and $\dimh K=s$.
\end{proof}


\begin{lemma}
    There exists a point $x_*\in K$ with $\#\Pi^{-1}(x_*)\geq 2^{\aleph_0}$.
\end{lemma}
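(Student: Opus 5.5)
We will build the point $x_*$ from the arithmetic identity that defines $u$, read in the digit expansion. By the definition of $\Pi$, a coding $\omega=\omega_1\omega_2\cdots\in\{0,u,1\}^{\mathbb N}$ projects to $\Pi(\omega)=\sum_{p\geq1}\omega_p4^{-p+1}$, with each digit read as a real number. Since $u=\sum_{k\geq0}4^{-2^k}$, we get for every position $j\geq1$
\[
    u\cdot 4^{-j+1}=\sum_{k\geq0}4^{-(j+2^k)+1}.
\]
So a single digit $u$ at position $j$ (with $0$ at the positions $j+2^k$) contributes exactly as much as the digit $1$ at all positions $j+2^k$, $k\geq0$ (with $0$ at position $j$). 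The plan is to perform this substitution independently at infinitely many sparse positions. Each subset of these positions then gives a different coding of one and the same point.

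Concretely, we will fix positions $j_n:=3\cdot4^n$ for $n\geq1$ and the blocks $S_n:=\{j_n+2^k:k\geq0\}$. For each $F\subset\mathbb N$, define $\omega^F$ by
\begin{itemize}
    \item $\omega^F_{j_n}=u$ for $n\in F$;
    \item $\omega^F_p=1$ for $p\in S_n$ with $n\notin F$;
    \item $\omega^F_p=0$ at all remaining positions.
\end{itemize}
This is well defined once the sets $\{j_n\}$ and $S_m$, over all $n,m$, are pairwise disjoint. All series involved are absolutely convergent with nonnegative terms, so they may be regrouped freely. Hence
\[
    \Pi(\omega^F)=\sum_{n\geq1}u\,4^{-j_n+1}=:x_*\quad\text{for every }F,
\]
and $x_*\in K$ since it is the projection of a coding. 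If $F\neq F'$, say $n\in F\setminus F'$, then $\omega^F$ and $\omega^{F'}$ differ at position $j_n$, where the digits are $u$ and $0$ respectively. Thus $F\mapsto\omega^F$ is injective from $2^{\mathbb N}$ into $\Pi^{-1}(x_*)$, which gives $\#\Pi^{-1}(x_*)\geq2^{\aleph_0}$.

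The only real work, and the step to check carefully, is the disjointness. We need $S_n\cap S_m=\varnothing$ for $n\neq m$, and $j_n\notin S_m$ for all $n,m$.

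For the first, suppose $n<m$ and $j_m-j_n=2^k-2^l$ with $k>l$. The left side equals $3\cdot4^n(4^e-1)$ with $e=m-n\geq1$. Its $2$-adic valuation is $2n$, since $3(4^e-1)$ is odd; the right side equals $2^l(2^{k-l}-1)$ with valuation $l$. Comparing valuations forces $l=2n$, and then $3(4^e-1)=2^d-1$ with $d=k-l$. This gives $2^d=3\cdot4^e-2\equiv2\pmod 4$, so $d=1$ and $3\cdot4^e=4$, which is impossible.

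For the second, $j_n\in S_m$ would require $j_n-j_m$ to be a power of $2$. This fails for $n=m$, since the difference is $0$, and for $n<m$, since the difference is negative. For $n>m$ the difference $3\cdot4^m(4^{n-m}-1)$ has odd part $3(4^{n-m}-1)>1$, so it is not a power of $2$.

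With disjointness verified, the remaining steps are routine bookkeeping with nonnegative series.
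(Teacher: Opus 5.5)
Your proof is correct and follows the paper's construction. The identity $u4^{-n+1}=\sum_{k\ge0}4^{-(n+2^k-1)}$ lets a single digit $u$ be swapped for infinitely many $1$'s, and this swap is made independently on pairwise disjoint blocks $j_n+H$ with $H=\{0\}\cup\{2^k:k\ge0\}$, so every subset of blocks gives a distinct coding of $\sum_n u4^{-j_n+1}$. The only difference is how the block positions are chosen: the paper picks them recursively using the sparsity of the difference set $H-H$, while you take the explicit sequence $j_n=3\cdot4^n$ and verify the disjointness directly with your $2$-adic argument, which checks out.
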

\begin{proof}
    Since $u=\sum_{k=0}^\infty 4^{-2^k}$, we have 
    \begin{equation}\label{eq:ureplace}
        u4^{-n+1} = \sum_{k=0}^\infty 4^{-(n+2^k-1)}, \quad \forall n\geq 1.
    \end{equation}
    This identity naturally allows one occurrence of the symbol $u$ to be replaced by infinitely many occurrences of the symbol $1$. To make this precise, let $H:=\{0\}\cup\{2^k: k\geq 0\}$.

    We first choose a sequence $0<n_1<n_2<\cdots$ such that the translated copies $H+n_j$ are pairwise disjoint. Consider the difference set $H-H:=\{a-b: a,b\in H\}$. Recall that $\#(H\cap [0,N])=O(\log N)$. So
    \[
        \#\big( (H-H)\cap [-N,N] \big) = O((\log N)^2).
    \]
    Then we can select the $n_j$'s recursively: once $n_1,\ldots,n_p$ have been chosen, we may choose $n_{p+1}>n_p$ outside the union 
    \[
        \bigcup_{j=1}^p (H-H+n_j) := \bigcup_{j=1}^p \{a-b+n_j: a,b\in H\}
    \]
    because it contains only $O(p(\log N)^2)$ integers in $[1,N]$. We then have $n_{p+1}-n_j\notin H-H$ for all $1\leq j\leq p$. Continuing in this manner, the sequence $\{n_j\}$ can be found.

    For every binary sequence $\varepsilon=(\varepsilon_j)_{j=1}^\infty\in\{0,1\}^{\mathbb{N}}$, define a coding $\omega^\varepsilon\in\{0,u,1\}^{\mathbb{N}}$ as follows. For $j\geq 1$, if $\varepsilon_j=0$ then set $\omega^\varepsilon_{n_j}=u$ and $\omega^\varepsilon_{n_j+2^k}=0$ for all $k\geq 0$; if $\varepsilon_j=1$ then set $\omega^\varepsilon_{n_j}=0$ and $\omega^\varepsilon_{n_j+2^k}=1$ for all $k\geq 0$; put all $\omega^\varepsilon_i=0$ at all remaining positions. The disjointness of $n_j+H$ guarantees that all $\omega^\varepsilon$'s are well defined. 
    
    We now verify explicitly that the projection does not depend on $\varepsilon$. Since every nonzero coordinate of $\omega^\varepsilon$ belongs to one of the pairwise disjoint sets $n_j+H$, absolute convergence allows us to group the coding series according to these sets:
    \begin{align*}
        \Pi(\omega^\varepsilon) = \sum_{m=1}^{\infty} \omega^\varepsilon_m4^{-(m-1)}  = \sum_{j=1}^{\infty} \Big( \omega^\varepsilon_{n_j}4^{-(n_j-1)} + \sum_{k=0}^{\infty} \omega^\varepsilon_{n_j+2^k} 4^{-(n_j+2^k-1)} \Big).
    \end{align*}
    For each $j$, if $\varepsilon_j=0$ then the expression in parentheses is $u4^{-(n_j-1)}=u4^{-n_j+1}$; if $\varepsilon_j=1$ then it is $\sum_{k=0}^{\infty}4^{-(n_j+2^k-1)} =u4^{-n_j+1}$ by~\eqref{eq:ureplace}. Thus the contribution of every block
    $n_j+H$ is $u4^{-n_j+1}$ in either case. Consequently,
    \[
        \Pi(\omega^\varepsilon) = \sum_{j=1}^{\infty}u4^{-n_j+1} =: x_*,
    \]
    independently of $\varepsilon$.
    
    Moreover, if $\varepsilon\neq\varepsilon'$ then $\varepsilon_j\neq\varepsilon'_j$ for some $j$ and therefore $\omega^\varepsilon_{n_j}\neq\omega^{\varepsilon'}_{n_j}$. Thus $\omega^\varepsilon$, $\varepsilon\in\{0,1\}^{\mathbb{N}}$ are pairwise distinct; in particular, $\#\Pi^{-1}(x_*)\geq\#\{0,1\}^{\mathbb{N}}=2^{\aleph_0}$. Since $\Sigma$ itself has cardinality $2^{\aleph_0}$, equality follows.
\end{proof}

\section{Transversal families: critical sets and infinite fibres}
\label{sec:transversalfam}

Throughout this section, we fix a bounded open set $U\subset\R^d$ and consider the family
\[
    \Phi^{\t} :=\{\varphi^{\t}_i(x) = \lambda_i(\t)x+d_i(\t)\}_{i\in\Lambda}, \quad \t\in U,
\]
where $\lambda_i(\t)\in(-1,1)\setminus\{0\}$ and $d_i(\t)\in\R$ are both of class $C^1$ on $U$. Denote by $K^{\t}$ the attractor and by $s(\t)$ the similarity dimension of $\Phi^{\t}$. The associated coding map is
\[
    \Pi^{\t}(\bi) = \lim_{n\to\infty} \varphi^{\t}_{i_1}\circ\cdots\circ\varphi^{\t}_{i_n}(0), \quad \forall \bi=i_1i_2\cdots\in\Sigma.
\]

We record the uniform $C^1$ dependence of the coding maps on the parameter.

\begin{lemma}\label{lem:uniformc1}
    Let $Q\subset U$ be compact. There exists $C>0$ and $0<\theta<1$ such that, 
    \[
        \|\Pi^{(\cdot)}(\bi)-\Pi^{(\cdot)}(\bj)\|_{C^1(Q)} \leq C\theta^{|\bi\wedge\bj|}.
    \]
    In particular, the family $\{\t\mapsto \Pi^{\t}(\bi): \bi\in\Sigma\}$ is uniformly bounded in $C^1(Q)$, and the family $\{\t\mapsto \nabla\Pi^{\t}(\bi): \bi\in\Sigma\}$ is equicontinuous on $Q$.
\end{lemma}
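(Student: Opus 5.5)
The plan is to write $\Pi^{\t}(\bi)$ as an explicit series and estimate it term by term. For $\bi=i_1i_2\cdots$ we have
\[
    \Pi^{\t}(\bi)=\sum_{k=1}^\infty \lambda_{i_1}(\t)\cdots\lambda_{i_{k-1}}(\t)\,d_{i_k}(\t).
\]
Since $Q$ is compact and the $\lambda_i$, $d_i$ are $C^1$, we can fix constants
\[
    \theta:=\max_{i}\max_{\t\in Q}|\lambda_i(\t)|<1,\qquad L:=\max_i\|\lambda_i\|_{C^1(Q)},\qquad D:=\max_i\|d_i\|_{C^1(Q)}.
\]
If $\bi$ and $\bj$ share the prefix of length $n=|\bi\wedge\bj|$, then the first $n$ terms of the two series coincide, so
\[
    \Pi^{\t}(\bi)-\Pi^{\t}(\bj)=\sum_{k>n}\bigl(a_k(\t)-b_k(\t)\bigr),
\]
where $a_k$ and $b_k$ denote the $k$-th terms of the two series. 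It therefore suffices to show $\|a_k\|_{C^1(Q)}\lesssim k\theta^{k-1}$ uniformly over all words.

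The $C^0$ bound is immediate: $|a_k|\le\theta^{k-1}D$. For the gradient we use the product rule, which gives $k$ terms. In each term one factor is differentiated, contributing at most $L$ or $D$, and the remaining $\lambda$ factors contribute at most $\theta^{k-2}$. Hence
\[
    |\nabla a_k|\le (k-1)L\theta^{k-2}D+\theta^{k-1}D\lesssim k\theta^{k-2}.
\]
Summing over $k>n$ gives
\[
    \sum_{k>n}k\theta^{k-2}\lesssim (n+1)\theta^{n}\le C'\theta'^{\,n}
\]
for any fixed $\theta'\in(\theta,1)$. So we obtain the claim with $\theta'$ in place of $\theta$. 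This also shows that the series of derivatives converges uniformly, which justifies differentiating term by term.

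For uniform boundedness, take $n=0$, or compare with a fixed word: $\|\Pi^{(\cdot)}(\bi)\|_{C^1(Q)}\le\sum_k\|a_k\|_{C^1(Q)}<\infty$, uniformly in $\bi$.

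For equicontinuity we use the same estimate. Given $\eta>0$, choose $n$ with $C\theta'^{\,n}<\eta/3$. Then every $\nabla\Pi^{(\cdot)}(\bi)$ lies within $\eta/3$ in sup norm of $\nabla\Pi^{(\cdot)}(\bi|_n\bj_0)$ for a fixed tail $\bj_0$. There are only finitely many such functions, one for each prefix in $\Lambda^n$, and each is continuous on the compact set $Q$, hence uniformly continuous. The standard $\eta/3$ argument then gives equicontinuity.

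The only delicate point is the extra factor $k$ coming from the product rule, which forces the replacement $\theta\to\theta'$. This is harmless, since the statement only asks for some $0<\theta<1$.
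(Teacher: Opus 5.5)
Your proposal is correct and follows essentially the same route as the paper. Both arguments use the explicit series for $\Pi^{\t}(\bi)$, the product-rule bound with the extra factor $k$, the tail sum starting at $n=|\bi\wedge\bj|$, and the passage to a slightly larger $\theta<1$. In the equicontinuity step you compare with $\nabla\Pi^{(\cdot)}(\bi|_n\bj_0)$, whereas the paper compares with the truncated sums $\Pi^{\t}_N(\bi)$; this is only a cosmetic difference, since both reduce to finitely many continuous functions.
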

\begin{proof}
    Set $\rho_Q := \max_{i\in\Lambda}\sup_{\t\in Q}|\lambda_i(\t)|<1$. 
    Since $\Lambda$ is finite and the functions $\lambda_i, d_i$ are $C^1$, $\lambda_i, d_i$ and their first derivatives are uniformly bounded on $Q$. For $\bi=i_1i_2\cdots\in\Sigma$, 
    \[
        \Pi^{\t}(\bi) = \sum_{k=1}^\infty d_{i_k}(\t)\prod_{\ell=1}^{k-1} \lambda_{i_{\ell}}(\t).
    \]
    The series converges uniformly on $Q\times\Sigma$. Moreover, 
    \[
        \Big| \nabla\prod_{\ell=1}^{k-1} \lambda_{i_{\ell}}(\t) \Big| \lesssim_Q (k-1)\rho_Q^{k-2}
    \]
    and hence 
    \[
        \Big| \nabla\Big( d_{i_k}(\t)\prod_{\ell=1}^{k-1} \lambda_{i_{\ell}}(\t) \Big) \Big| \lesssim_Q \rho_Q^{k-1}+(k-1)\rho_Q^{k-2}.
    \]
    Since the right hand side is summable in $k$, the series can be differentiated term by term, uniformly on $Q\times\Sigma$. In particular, 
    \[
        \sup_{\bi\in\Sigma} \|\Pi^{(\cdot)}(\bi)\|_{C^1(Q)} < \infty.
    \]

    Similarly, one can obtain uniformly for $\t\in Q$ that if $\bi$ and $\bj$ agree in their first $n$ symbols, then $|\Pi^{\t}(\bi)-\Pi^{\t}(\bj)| \lesssim_Q \rho_Q^n$ 
    and
    \[
        \big| \nabla(\Pi^{\t}(\bi)-\Pi^{\t}(\bj))\big| \lesssim_Q (n+1)\rho_Q^{n-1}.
    \]
    Since $\rho_Q<1$, we may choose $\theta\in(\rho_Q,1)$ so that $(n+1)\rho_Q^{n-1}\lesssim \theta^n$ for all $n$, which gives the desired $C^1$ estimate.

    For the equicontinuity of the gradients, let 
    \[
        \Pi^{\t}_N(\bi) := \sum_{k=1}^N d_{i_k}(\t)\prod_{\ell=1}^{k-1} \lambda_{i_{\ell}}(\t).
    \]
    The estimates above imply 
    \[
        \lim_{N\to \infty} \Big( \sup_{\bi\in\Sigma}\sup_{\t\in Q} \big|\nabla(\Pi^{\t}(\bi)-\Pi^{\t}_N(\bi))\big| \Big) = 0.
    \]
    For each $N$, the function $\nabla\Pi^{\t}_N(\bi)$ depends only on the first $N$ symbols of $\bi$. So as $\bi$ varies, there are only finitely many such functions and each of them is continuous on $Q$. Combining this with the uniform convergence of the gradients, we see that the family $\{\t\mapsto \nabla\Pi^{\t}(\bi): \bi\in\Sigma\}$ is equicontinuous on $Q$.
\end{proof}

Similarly as in the determined case, for a finite word $\i=i_1\cdots i_p$ we write
\[
    \lambda_{\i}^{\t} := \lambda_{i_1}(\t)\cdots \lambda_{i_p}(\t) \quad\text{and}\quad \varphi_{\i}^{\t} := \varphi_{i_1}^{\t}\circ\cdots\circ\varphi_{i_p}^{\t}.
\]


\begin{definition}\label{de:trans}
    The family $\Phi^{\t}$ is said to satisfy the \emph{transversality condition} if there exists a constant $\alpha>0$ such that for all $\bi,\bj\in\Sigma$ with $i_1\neq j_1$ and all $\t\in U$, 
    \begin{equation}\label{eq:transversality}
        |\Pi^{\t}(\bi)-\Pi^{\t}(\bj)|<\alpha \quad\Longrightarrow\quad \big\| \nabla\bigl( \Pi^{\t}(\bi)-\Pi^{\t}(\bj) \bigr) \big\|>\alpha.
    \end{equation}
\end{definition}

We shall occasionally use the following standard consequence of transversality. For a proof, see~\cite[Chapter 6]{BSS23}.

\begin{lemma}
    Suppose $\{\Phi^{\t}\}_{\t\in U}$ satisfies the transversality condition. For any compact $Q\subset U$, there exist constants $C_Q>0$, $\delta_Q>0$ such that for every $\bi,\bj\in\Sigma$ with $i_1\neq j_1$, 
    \[
        \mathcal{L}^d(\{\t\in Q: |\Pi^{\t}(\bi)-\Pi^{\t}(\bj)|<\delta\}) \leq C_Q\delta, \quad \forall 0<\delta\leq\delta_Q.
    \]
\end{lemma}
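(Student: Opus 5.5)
The statement follows from transversality, the uniform $C^1$ bounds of Lemma~\ref{lem:uniformc1}, and an implicit-function/coarea argument. Fix $\bi,\bj$ with $i_1\neq j_1$ and write $g(\t):=\Pi^{\t}(\bi)-\Pi^{\t}(\bj)$. By Lemma~\ref{lem:uniformc1} the family $\{g\}$, indexed over all such pairs, is bounded in $C^1(Q')$ for a slightly larger compact neighbourhood $Q'\subset U$ of $Q$. The same lemma also makes the gradients $\nabla g$ equicontinuous on $Q'$.

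\textbf{Choice of constants.} Let $\alpha$ be the transversality constant and choose $\delta_Q\leq\alpha$. Then on the set $\{\t\in Q':|g(\t)|<\delta\}$, with $\delta\le\delta_Q$, we have $\|\nabla g\|>\alpha$. By equicontinuity there is a radius $\eta>0$, independent of $(\bi,\bj)$, such that whenever $\|\nabla g(\t_0)\|>\alpha$ we get $\|\nabla g(\t)-\nabla g(\t_0)\|<\alpha/2$ for all $|\t-\t_0|<\eta$. Cover $Q$ by $N_Q$ balls $B(\t_k,\eta/2)$, with $N_Q$ depending only on $Q$ and $\eta$.

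\textbf{Local estimate.} Consider a ball $B_k=B(\t_k,\eta/2)$ that meets $\{|g|<\delta\}$, say at a point $\t_0$. Then $B_k\subset B(\t_0,\eta)$, so throughout $B_k$ the gradient $\nabla g$ stays within $\alpha/2$ of the fixed vector $v:=\nabla g(\t_0)$, which has $\|v\|>\alpha$. Put $e:=v/\|v\|$. Then
\[
\partial_e g \;\ge\; \|v\|-\alpha/2 \;>\; \alpha/2 \quad\text{on } B_k .
\]
Restrict to any line segment in $B_k$ parallel to $e$. Along it $g$ is strictly monotone with derivative at least $\alpha/2$. Hence the set where $|g|<\delta$ on that segment is an interval of length at most $4\delta/\alpha$. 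Fubini over the segments parallel to $e$ then gives
\[
\mathcal{L}^d\bigl(B_k\cap\{|g|<\delta\}\bigr)\lesssim \eta^{d-1}\,\delta/\alpha .
\]
Summing over the $N_Q$ balls yields the bound $C_Q\delta$, with $C_Q\approx N_Q\eta^{d-1}/\alpha$.

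\textbf{Main obstacle.} The key point is uniformity in $(\bi,\bj)$. The radius $\eta$ must not depend on the particular pair, which is exactly what the equicontinuity clause in Lemma~\ref{lem:uniformc1} supplies.

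A smaller technical issue concerns points of $Q$ near $\partial U$. The transversality hypothesis holds on all of $U$, so it holds on $Q'$. We therefore choose $\eta$ smaller than $\dist(Q,\partial U)$ and take $Q'$ to be the closed $\eta$-neighbourhood of $Q$. This ensures every ball $B(\t_0,\eta)$ used above lies in $Q'\subset U$, so both the transversality bound and the equicontinuity apply there.
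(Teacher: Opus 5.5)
Your argument is correct. The paper gives no proof of this lemma; it only points to [BSS23, Chapter 6]. What you wrote is the standard localisation argument: fix the direction of $\nabla g$ on balls whose radius $\eta$ is uniform in $(\bi,\bj)$, then apply Fubini along that direction. You rightly single out the equicontinuity clause of Lemma~\ref{lem:uniformc1} as what makes $\eta$, and hence $C_Q$, independent of the pair.

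One thing needs tidying: the order of choices in your last paragraph. As written, $\eta$ comes from equicontinuity on $Q'$, while $Q'$ is defined as the $\eta$-neighbourhood of $Q$. Fix this as follows:
\begin{enumerate}
\item Choose $Q'$ first, as the closed $\eta_0$-neighbourhood of $Q$ with $0<\eta_0<\dist(Q,\R^d\setminus U)$. It is compact because $U$ is bounded.
\item Take $\eta$ from the uniform equicontinuity of $\{\nabla g\}$ on $Q'$. Equicontinuity on a compact set is automatically uniform, and uniformity is also visible directly in the proof of Lemma~\ref{lem:uniformc1}.
\item Shrink $\eta$ so that $\eta\le\eta_0$.
\end{enumerate}
Then every ball $B_k$ and every point $\t_0$ you use lies in $Q'\subset U$.

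The uniform bound on $\|g\|_{C^1}$ is never used in your argument; only transversality and the uniform modulus of continuity of $\nabla g$ matter. For $d=1$ you could even drop equicontinuity. There each component of $\{|g|<\alpha\}$ is an interval on which $g$ is monotone. A component lying inside a fixed interval has length at least $2\alpha/\sup|g'|$, so the $C^1$ bound alone controls the number of components. Your localisation is what handles general $d$.
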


\subsection{Density and dimension of infinite-fibre parameters}

We first prove Theorem~\ref{thm:denseinffibre}. Let $\widetilde{U}$ and $T$ be as in Theorem~\ref{thm:denseinffibre}. It is clear from the definition that $T\subset\widetilde{U}$. We approximate the Hausdorff dimension of $K^{\t_0}$ from below using SSC subsystems. Transversality yields a family of $(d-1)$-dimensional coincidence surfaces, indexed by the symbolic space of a subsystem. The SSC bounds the distances between these surfaces from below in terms of the symbolic distance. A Frostman argument then adds the symbolic dimension to this codimension-one parameter dimension.

\begin{lemma}\label{lem:sscsubsystem}
    Let $\t_0\in U$. For every $\varepsilon>0$, there exists a finite set $\Gamma\subset\Lambda^*$ such that the subsystem $\Psi^{\t_0}:=\{\varphi_{\i}^{\t_0}: \i\in\Gamma\}$ satisfies the SSC and has similarity dimension at least $\dimh K^{\t_0}-\varepsilon$. Consequently, the map $\t\mapsto\dimh K^{\t}$ is lower semicontinuous on $U$.
\end{lemma}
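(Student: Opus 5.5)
The plan is to approximate $K^{\t_0}$ from inside by a strongly separated subsystem consisting of cylinder maps of nearly equal size. Separation will come from the fact that most cylinders at a fine scale avoid one another. Lower semicontinuity will then follow because the SSC is an open condition in the parameter.

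Write $D:=\dimh K^{\t_0}$ and fix $\varepsilon>0$; we may assume $D>0$. Work with the natural measure at $\t_0$: by Feng--Hu (Theorem~\ref{thm:FH09}) the measure $\nu^{\t_0}$ is exact dimensional, and $\dimh\nu^{\t_0}$ is the dimension of some self-similar measure. Using the whole attractor is cleaner, so we start from a covering-number statement. Since $K^{\t_0}$ is self-similar, $\dimh K^{\t_0}=\dimb K^{\t_0}$ (Falconer's implicit theorem). Hence for small $\delta$, a maximal $3\delta$-separated set of points in $K^{\t_0}$ has cardinality at least $\delta^{-(D-\varepsilon/2)}$.

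Next, choose one cylinder through each such point using the stopping set $\Lambda_\delta$ of \eqref{eq:lambdan}. This gives words $\i\in\Lambda_\delta$ with cylinder intervals of length at most $\delta\,\diam K^{\t_0}$. After normalising $\diam K^{\t_0}\le 1$ and replacing $3\delta$ by a suitably larger multiple of $\delta$, these cylinder intervals are pairwise disjoint with gaps. Call the resulting family $\Gamma_0$, so that $\#\Gamma_0\gtrsim\delta^{-(D-\varepsilon/2)}$. The subsystem $\{\varphi^{\t_0}_{\i}:\i\in\Gamma_0\}$ then satisfies the SSC, because its first-level images lie in pairwise disjoint intervals. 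Every ratio $r_{\i}$ exceeds $\underline r\,\delta$, so the similarity dimension $s'$ satisfies $1=\sum_{\i\in\Gamma_0}r_{\i}^{s'}\ge\#\Gamma_0(\underline r\delta)^{s'}$. This gives
\[
s'\ge\frac{(D-\varepsilon/2)\log(1/\delta)-O(1)}{\log(1/\delta)+\log(1/\underline r)}\ge D-\varepsilon
\]
once $\delta$ is small enough. Take $\Gamma:=\Gamma_0$.

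For lower semicontinuity, fix $\Gamma$ as above. The finitely many maps $\varphi^{\t}_{\i}$, $\i\in\Gamma$, depend continuously on $\t$. Their images of the convex hull $I^{\t}$ of $K^{\t}$ are disjoint at $\t_0$ with positive gaps, and $I^{\t}$ varies continuously by Lemma~\ref{lem:uniformc1}. Therefore the SSC persists on a neighbourhood of $\t_0$. The similarity dimension of the subsystem is also continuous in $\t$, because the ratios are continuous and the defining equation has a nonzero derivative in $s$. For $\t$ near $\t_0$ we then get $\dimh K^{\t}\ge\dimh K_\Gamma^{\t}=s_\Gamma(\t)\ge D-2\varepsilon$, where we use that the attractor of a subsystem lies in $K^{\t}$ and that the SSC gives equality of dimensions.

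I expect the main obstacle to be the separation step: verifying that cylinders through $c\delta$-separated points are genuinely disjoint, with gaps, at the level of the convex hulls and not merely of the cylinder sets. This needs the length bound $r_{\i}\diam I\le\delta$ together with choosing the separation constant larger than $2\diam I$. The other potential gap is the appeal to $\dimh=\dimb$. If it proves awkward, I would instead take a Frostman-type approach and use a $\delta$-net of a high-dimensional exact-dimensional self-similar measure. Alternatively, since $\dimh\le\dimb$ always holds, I would count the points from below through the Hausdorff content directly.
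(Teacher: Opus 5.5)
Your proposal is correct. For lower semicontinuity you argue as the paper does: the SSC persists for the finitely many maps $\varphi^{\t}_{\i}$, $\i\in\Gamma$, the similarity dimension of the subsystem is continuous, and $K^{\t}_\Gamma\subset K^{\t}$. You also justify the persistence of the SSC a little more explicitly, through the gaps between the intervals $\varphi^{\t}_{\i}(I^{\t})$.

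The genuine difference is in the first assertion. The paper does not prove it and simply cites Farkas~\cite{Far19}. You give a self-contained construction instead:
\begin{enumerate}
\item take a maximal $c\delta$-separated subset of $K^{\t_0}$ with $c>2\diam I$;
\item attach to each point its $\Lambda_\delta$-cylinder;
\item note that the cylinder intervals contain their points and have length at most $\delta\diam I$, so they are pairwise disjoint with gaps of size at least $(c-2\diam I)\delta$, and the words are distinct;
\item use $r_{\i}>\underline{r}\delta$ to turn the count of points into a lower bound on the similarity dimension.
\end{enumerate}
This is essentially the standard argument behind such inner approximations, and it works verbatim in higher dimensions with convex hulls. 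The citation buys brevity and the generality of Farkas's setting, which covers graph-directed systems. Your route makes the lemma elementary and self-contained.

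The gap you flag is not there. You do not need Falconer's theorem $\dimh=\dimb$ for self-similar sets, nor any Frostman or Feng--Hu input. The lower bound $N_{c\delta}(K^{\t_0})\ge\delta^{-(\dimh K^{\t_0}-\varepsilon/2)}$ for all small $\delta$ already follows from $\dimh E\le\underline{\dim}_B E$, which holds for every bounded set $E$. The separation step you worry about is also fine exactly as you set it up.
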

\begin{proof}
    The first assertion can be found in~\cite{Far19}. The second one is also well known; we present the short proof here for completeness. Fix $\varepsilon>0$ and choose $\Gamma$ accordingly. Since $\Psi^{\t_0}$ satisfies the SSC, the same remains true for $\Psi^{\t}$ whenever $\t$ is close enough to $\t_0$. Under the SSC, the Hausdorff dimension of the attractor coincides with the similarity dimension, which depends continuously on $\t$. So for all $\t$ sufficiently close to $\t_0$ we have 
    \[
        \dimh K^{\t} \geq \dimh K^{\t_0}-2\varepsilon.
    \]
    Since $\varepsilon>0$ was arbitrary, lower semicontinuity follows.
\end{proof}

The following elementary dimension estimate will be applied to the family of coincidence surfaces.

\begin{lemma}\label{lem:hypersurface-family}
    Let $(\Omega,\rho_*)$ be a compact metric space carrying an $s$-Frostman probability measure $\nu$. Assume that to each $\omega\in\Omega$ there corresponds a unique non-empty compact set $E_\omega\subset U$ such that:
    \begin{enumerate}
        \item $\h^{d-1}(E_\omega)\approx 1$ and $\h^{d-1}(B(\t,r)\cap E_\omega) \lesssim r^{d-1}$ uniformly in $\omega$, $\t$ and $0<r<1$;
        \item $\dist(E_\omega,E_\eta)\gtrsim \rho_*(\omega,\eta)$ for all $\omega,\eta\in\Omega$;
        \item $\omega\mapsto \h^{d-1}(E_\omega\cap A)$ is Borel for every Borel $A\subset\R^d$.
    \end{enumerate}
    Then
    \[
        \dimh \Bigl(\bigcup_{\omega\in\Omega}E_\omega\Bigr)\geq (d-1)+s.
    \]
\end{lemma}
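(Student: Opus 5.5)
The plan is to push forward $\nu$ to a measure on $F:=\bigcup_{\omega\in\Omega}E_\omega$ and apply the mass distribution principle. Hypothesis (3) allows us to define a Borel measure
\[
    \eta(A) := \int_\Omega \h^{d-1}(E_\omega\cap A)\rd\nu(\omega), \quad A\subset\R^d \text{ Borel}.
\]
By (1), $\eta(F)\geq\int\h^{d-1}(E_\omega)\rd\nu\approx 1$, so $\eta$ is a finite non-zero measure carried by $F$. The goal is to prove
\[
    \eta(B(\t,r))\lesssim r^{d-1+s}
\]
uniformly in $\t\in\R^d$ and $0<r<1$. Frostman's lemma (the mass distribution principle) then gives $\dimh F\geq d-1+s$.

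To bound $\eta(B(\t,r))$, first restrict attention to the indices that can contribute. Let
\[
    \Omega_{\t,r}:=\{\omega\in\Omega: E_\omega\cap B(\t,r)\neq\varnothing\}.
\]
If $\omega,\omega'\in\Omega_{\t,r}$, then $\dist(E_\omega,E_{\omega'})\leq 2r$. Hypothesis (2) therefore gives $\rho_*(\omega,\omega')\leq Cr$. So $\Omega_{\t,r}$ is contained in a single $\rho_*$-ball of radius $Cr$ centred at any of its points, and the $s$-Frostman property of $\nu$ yields
\[
    \nu(\Omega_{\t,r})\lesssim r^s.
\]
For $\omega\in\Omega_{\t,r}$, the second half of (1) gives $\h^{d-1}(E_\omega\cap B(\t,r))\lesssim r^{d-1}$, and the integrand vanishes for $\omega\notin\Omega_{\t,r}$. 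Hence
\[
    \eta(B(\t,r))\leq \nu(\Omega_{\t,r})\sup_\omega\h^{d-1}(E_\omega\cap B(\t,r))\lesssim r^{s}r^{d-1}.
\]
Both balls may be taken closed or open as convenient, since only constants change.

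The only delicate point is measurability. We need $\eta$ to be a genuine Borel measure and $\Omega_{\t,r}$ to be $\nu$-measurable. The first is exactly what (3) provides, together with monotone convergence for countable additivity. For the second, we avoid measuring $\Omega_{\t,r}$ directly: we bound the integral by $\nu$ of the closed $\rho_*$-ball containing $\Omega_{\t,r}$, which is Borel. When $\Omega_{\t,r}=\varnothing$ we have $\eta(B(\t,r))=0$ trivially. With these points handled, the estimate above completes the proof.
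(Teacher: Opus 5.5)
Your proposal is correct and is essentially the paper's own proof. It uses the same measure $A\mapsto\int_\Omega \h^{d-1}(A\cap E_\omega)\rd\nu(\omega)$, the same use of hypothesis (2) to place all contributing indices in a single $\rho_*$-ball of radius $\lesssim r$, and then the $s$-Frostman bound for $\nu$ and the mass distribution principle. Your measurability remark, integrating over that Borel ball rather than measuring $\Omega_{\t,r}$ directly, is exactly how the paper's estimate is set up.
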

\begin{proof}
    Define a finite Borel measure $\mu$ by
    \[
        \mu(A) := \int_\Omega \mathcal H^{d-1}(A\cap E_\omega)\,d\nu(\omega).
    \]
    By (1), $\mu(\R^d)\approx 1>0$. Fix $\t\in U$ and $0<r<1$. Let 
    \[
        \Omega(\t,r) := \{\omega\in\Omega: E_\omega\cap B(\t,r)\neq\varnothing\}.
    \]
    If this set is empty then $\mu(B(\t,r))=0$. Otherwise, choose $\omega_0\in\Omega(\t,r)$. Then every $\omega\in\Omega(\t,r)$ satisfies $\dist(E_\omega,E_{\omega_0}) \leq 2r$. By the separation hypothesis (2), $\rho_*(\omega,\omega_0)\lesssim r$. Hence there exists a constant $C>0$ such that $\Omega(\t,r)\subset B_{\rho_*}(\omega_0,Cr)$. It then follows from the Frostman property of $\nu$ that
    \[
        \mu(B(\t,r)) \leq \int_{B_{\rho_*}(\omega_0,Cr)} \mathcal H^{d-1}(E_\omega\cap B(\t,r))\,d\nu(\omega) \lesssim r^{d-1}\cdot r^s = r^{d-1+s}.
    \]
    By the mass distribution principle,
    \[
        \dim_H\Bigl(\bigcup_{\omega\in\Omega}E_\omega\Bigr)\geq  d-1+s.
    \]
\end{proof}

The separation assumption in the above lemma will follow from the next observation.

\begin{lemma}\label{lem:zeroseparation}
    Let $(\Omega,\rho_*)$ be a compact metric space and let $Q\subset\R^d$ be compact. Assume that $h$ and $\{f_\omega\}_{\omega\in\Omega}$ are uniformly Lipschitz on $Q$, say $\lip(h)+\sup_{\omega\in\Omega}\lip(f_\omega) \leq L$. Let $E_\omega:=\{\t\in Q: f_{\omega}(\t)=h(\t)\}$. If there exists $c>0$ such that 
    \[
        |f_\omega(\t)-f_\eta(\t)| \geq c\rho_*(\omega,\eta), \quad \forall \t\in Q,
    \]
    then $\dist(E_\omega,E_\eta)\gtrsim_{c,L}\rho_*(\omega,\eta)$.
\end{lemma}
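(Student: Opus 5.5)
The argument is a one-line triangle inequality along the segment joining nearby coincidence points. Fix $\omega,\eta\in\Omega$ and take arbitrary $\t\in E_\omega$ and $\t'\in E_\eta$. We aim to bound $|\t-\t'|$ from below by a multiple of $\rho_*(\omega,\eta)$. Taking the infimum over such pairs then gives the claim. The conclusion is vacuous if either set is empty, and trivial if $\omega=\eta$.

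Put $g:=f_\omega-h$. Then $g(\t)=0$, and since $f_\eta(\t')=h(\t')$ we also have $g(\t')=f_\omega(\t')-f_\eta(\t')$. The hypothesis applied at the point $\t'\in Q$ gives
\[
    |g(\t')-g(\t)| = |f_\omega(\t')-f_\eta(\t')| \geq c\rho_*(\omega,\eta).
\]
On the other hand, $g$ is Lipschitz with constant at most $\lip(f_\omega)+\lip(h)\leq L$. Hence $|g(\t')-g(\t)|\leq L|\t-\t'|$.

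Combining the two inequalities gives $|\t-\t'|\geq (c/L)\rho_*(\omega,\eta)$. Therefore $\dist(E_\omega,E_\eta)\geq (c/L)\rho_*(\omega,\eta)$, with implied constant $c/L$. If $L=0$, then $g$ is constant on $Q$. In that case $E_\omega$ and $E_\eta$ cannot both be non-empty unless $\omega=\eta$, so the statement is trivial.

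There is no real obstacle. The only point to watch is the domain of the Lipschitz bound. The Lipschitz hypothesis is on $Q$ and both points lie in $Q$, so no convexity of $Q$ is needed, because we use the Lipschitz inequality directly rather than a mean value theorem.
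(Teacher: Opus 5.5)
Your proof is correct and is essentially the paper's argument. The paper evaluates the separation hypothesis at $\t\in E_\omega$ and bounds $|h(\t)-f_\eta(\t)|\le|h(\t)-h(\t')|+|f_\eta(\t')-f_\eta(\t)|\le L|\t-\t'|$; you evaluate it at $\t'\in E_\eta$ and combine the same two Lipschitz terms into $g=f_\omega-h$, reaching the same constant $c/L$.
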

\begin{proof}
    Take $\t\in E_\omega$ and $\t'\in E_\eta$. Then 
    \begin{align*}
        c\rho_*(\omega,\eta) \leq |f_\omega(\t)-f_\eta(\t)| &= |h(\t)-f_\eta(\t)| \\
        &\leq |h(\t)-h(\t')|+|f_\eta(\t')-f_\eta(\t)| \leq L|\t-\t'|.
    \end{align*}
    Taking the infimum over $E_\omega$ and $E_\eta$ proves the claim.
\end{proof}


For $\bi,\bj\in\Sigma$ and $k\geq 1$, define 
\[
    U_k(\bi,\bj) := \{\t\in U: \exists \bj'\in[\bj|_k] \text{ such that } \Pi^{\t}(\bj')=\Pi^{\t}(\bi)\}.
\]
So $U_k(\bi,\bj)$ consists of parameters for which the point coded by $\bi$ also has a coding whose first $k$ symbols agree with $\bj$.

\begin{proposition}\label{prop:dimensionstate}
    Assume that $\{\Phi^{\t}\}_{\t\in U}$ satisfies the transversality condition. Let $\bi,\bj\in\Sigma$ satisfy $i_1\neq j_1$. If $\Pi^{\t_0}(\bi)=\Pi^{\t_0}(\bj)$ for some $\t_0\in U$, then for every neighbourhood $V\subset U$ of $\t_0$ and every $\varepsilon>0$, 
    \[
        \dimh (U_k(\bi,\bj)\cap V) \geq d-1+\dimh K^{\t_0}-\varepsilon
    \]
    for all large $k$.
\end{proposition}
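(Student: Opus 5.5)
Our plan is to build a family of coincidence surfaces indexed by the symbolic space of an SSC subsystem, and then feed it into Lemma~\ref{lem:hypersurface-family} via Lemma~\ref{lem:zeroseparation}.

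\textbf{Setup.} Fix $V$ and $\varepsilon$, and shrink $V$ to a small closed ball $Q\ni\t_0$. Write $g(\t):=\Pi^{\t}(\bi)-\Pi^{\t}(\bj)$. Then $g(\t_0)=0$. By transversality $\|\nabla g(\t_0)\|>\alpha$, so after choosing coordinates we may assume $|\partial_{t_1}g|\geq \alpha/2$ on $Q$, using the equicontinuity from Lemma~\ref{lem:uniformc1}. Apply Lemma~\ref{lem:sscsubsystem} to obtain $\Gamma$ with $\Psi^{\t_0}$ satisfying the SSC and $s(\Psi^{\t_0})\geq\dimh K^{\t_0}-\varepsilon/2$. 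Shrinking $Q$, the SSC and the similarity dimension persist on $Q$ up to $\varepsilon$.

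\textbf{Perturbed codings.} For $\omega$ in the subsystem's symbolic space $\Gamma^{\mathbb N}$ (a closed subset of $\Sigma$ after concatenation), consider $\bj_\omega:=\bj|_k\,\omega$. We have
\[
\Pi^{\t}(\bj_\omega)-\Pi^{\t}(\bj)=\varphi^{\t}_{\bj|_k}\bigl(\Pi^{\t}(\omega)\bigr)-\varphi^{\t}_{\bj|_k}\bigl(\Pi^{\t}(\sigma^k\bj)\bigr),
\]
which is $O(\theta^k)$ in $C^1(Q)$. Set $f_\omega(\t):=\Pi^{\t}(\bj_\omega)$, $h(\t):=\Pi^{\t}(\bi)$, and $E_\omega:=\{\t\in Q: f_\omega=h\}\subset U_k(\bi,\bj)$.

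\textbf{Surface structure.} For large $k$, $h-f_\omega$ is $C^1$-close to $g$ uniformly in $\omega$. By the implicit function theorem, $E_\omega$ is therefore a graph $t_1=\psi_\omega(t_2,\dots,t_d)$ over a fixed $(d-1)$-ball, with uniformly bounded Lipschitz constant. This gives hypothesis (1) of Lemma~\ref{lem:hypersurface-family}. Hypothesis (3) follows from the continuity of $\omega\mapsto\psi_\omega$.

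\textbf{Separation.} For hypothesis (2), we use the SSC of $\Psi^{\t}$ on $Q$. Distinct $\omega,\eta$ with common prefix $\bm w\in\Gamma^*$ satisfy
\[
|\Pi^{\t}(\omega)-\Pi^{\t}(\eta)|\gtrsim r^{\t}_{\bm w}.
\]
Applying $\varphi_{\bj|_k}^{\t}$ gives $|f_\omega-f_\eta|\geq c_k\,\rho^{\t}(\omega,\eta)$. We need a single metric $\rho_*$ that works for all $\t\in Q$. Take $\rho_*:=\rho^{\t_0}$ restricted to $\Gamma^{\mathbb N}$. Ratios at different $\t\in Q$ distort it only by a factor $(r^{\t}_{\bm w}/r^{\t_0}_{\bm w})\geq (r^{\t_0}_{\bm w})^{\varepsilon'}$ when $Q$ is small. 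This costs an arbitrarily small exponent, so we replace $\rho_*$ by $\rho_*^{1+\varepsilon'}$. Lemma~\ref{lem:zeroseparation} then yields $\dist(E_\omega,E_\eta)\gtrsim\rho_*(\omega,\eta)^{1+\varepsilon'}$.

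\textbf{Conclusion.} The natural Bernoulli measure on $\Gamma^{\mathbb N}$ is $s(\Psi^{\t_0})$-Frostman for $\rho^{\t_0}$. Hence it is $s(\Psi^{\t_0})/(1+\varepsilon')$-Frostman for the modified metric. Lemma~\ref{lem:hypersurface-family} gives
\[
\dimh\bigcup_\omega E_\omega\geq d-1+\dimh K^{\t_0}-\varepsilon,
\]
and the union lies in $U_k(\bi,\bj)\cap V$.

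\textbf{Main obstacle.} We expect the hardest step to be this uniform-in-$\t$ separation (2). The difficulty is reconciling the $\t$-dependence of the contraction ratios with a fixed metric on $\Omega$ while keeping the constants independent of $\omega$. The first approach we would try is the small-exponent loss described above.
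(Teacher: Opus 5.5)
Your proposal is correct and follows essentially the paper's route: an SSC subsystem, coincidence graphs $\Pi^{\t}(\bj|_k\omega)=\Pi^{\t}(\bi)$ obtained from the implicit function theorem, separation via Lemma~\ref{lem:zeroseparation}, and the Frostman argument of Lemma~\ref{lem:hypersurface-family}. The only difference is in the step you flagged as the main obstacle. The paper makes the metric $\t$-independent by building $\rho_*$ from the minimal ratios $\underline{r}_{\i}=\min_{\t\in Q_0}|\lambda_{\i}(\t)|$, so the lower bound $|\lambda_{\bm w}(\t)|\geq\underline{r}_{\bm w}$ is immediate and the small loss appears only in the Frostman exponent $s_*$; your $(\rho^{\t_0})^{1+\varepsilon'}$ device does the same job at an equally negligible cost.
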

\begin{proof}
    Fix $\varepsilon>0$. By Lemma~\ref{lem:sscsubsystem}, there exists a finite $\Gamma\subset\Lambda^*$ such that $\Psi^{\t_0}:=\{\varphi_{\i}^{\t_0}:\i\in\Gamma\}$ satisfies the SSC and has similarity dimension $\tilde{s}(\t_0)\geq \dimh K^{\t_0}-\varepsilon/4$. The coefficients $\lambda_{\i}(\t)$ and $d_{\i}(\t)$ depend continuously on $\t$ for every $\i\in\Gamma$. Hence we may take a small compact neighbourhood $Q_0\subset V$ of $\t_0$ such that the restricted IFS $\Psi^{\t}:=\{\varphi^{\t}_{\i}: \i\in\Gamma\}$ still satisfies the SSC for every $\t\in Q_0$.  
    
    For $\i\in\Gamma$, write $\underline{r}_{\i}:=\min_{\t\in Q_0} |\lambda_{\i}(\t)|$ and let $s_*$ be the unique solution of $\sum_{\i\in\Gamma} \underline{r}_{\i}^{s_*}=1$. Shrinking $Q_0$ if necessary, we may assume that $s_*>\dimh K^{\t_0}-\varepsilon/2$. Note that we can regard $\Omega:=\Gamma^{\mathbb{N}}$ as the symbolic space of the subsystem with the metric 
    \[
        \rho_*(\omega,\eta) = \underline{r}_{\omega\wedge\eta}, \quad \forall\omega,\eta\in\Omega,
    \]
    where $\underline{r}_{\i_1\cdots \i_n}:=\underline{r}_{\i_1}\cdots\underline{r}_{\i_n}$ for $\i_1,\ldots,\i_n\in\Gamma$. The Bernoulli measure with weights $(\underline{r}_{\i}^{s_*})_{\i\in\Gamma}$ is an $s_*$-Frostman measure on $(\Omega,\rho_*)$.
    Also, since the SSC for $\Psi^{\t}$ holds uniformly on $Q_0$, there exists a constant $C_1>0$ such that
    \begin{equation}\label{eq:thesscbd}
    |\Pi^{\t}(\omega)-\Pi^{\t}(\eta)| \geq C_1 \rho_*(\omega,\eta), \quad \forall \omega,\eta\in\Omega, \forall \t\in Q_0.
    \end{equation}

    Set $F(\t):=\Pi^{\t}(\bj)-\Pi^{\t}(\bi)$. Then $F(\t_0)=0$. Since $i_1\neq j_1$, the transversality condition gives $|\nabla F(\t_0)|>0$. Choose a unit vector $v$ with $\partial_v F(\t_0)>0$. By shrinking around $\t_0$ inside $Q_0$, we may choose $a>0$ and a closed ball $B\subset v^\perp$, centered at the origin, such that 
    \[
        Q := \{\t_0+\delta v+z: |\delta|\leq a, z\in B\} \subset Q_0
    \]
    and  
    \[
        \partial_v F(\t) \geq c_0>0, \quad \forall \t\in Q.
    \]
    Picking $a$ and $B$ small enough, we may also arrange that 
    \[
        F(\t_0-av+z)<0 < F(\t_0+av+z), \quad \forall z\in B.
    \]
    For $\omega\in\Omega$, define $F_{k,\omega}(\t):=\Pi^{\t}(\bj|_k\omega)-\Pi^{\t}(\bi)$. By Lemma~\ref{lem:uniformc1}, $F_{k,\omega}\to F$ in $C^1(Q)$ uniformly over $\omega\in\Omega$. So we have for all large $k$ that 
    \begin{equation}\label{eq:parvoffko}
        \partial_v F_{k,\omega}(\t) \geq \frac{c_0}{2}, \quad \forall \t\in Q, \omega\in\Omega
    \end{equation}
    and 
    \[
        F_{k,\omega}(\t_0-av+z)<0 < F_{k,\omega}(\t_0+av+z), \quad \forall z\in B, \omega\in\Omega.
    \]
    Fix such a $k$. 
    
    For any $\omega\in\Omega$ and $z\in B$, the function $\delta \mapsto F_{k,\omega}(\t_0+\delta v+z)$ is strictly increasing on $[-a,a]$ and takes opposite signs at the two endpoints. Hence it has a unique zero $h_{\omega}(z)\in(-a,a)$. Letting 
    \[
        E_\omega := \{\t\in Q: F_{k,\omega}(\t)=0\},
    \]
    we have 
    \begin{equation}\label{eq:formofeomega}
        E_{\omega} = \{\t_0+h_{\omega}(z)v+z: z\in B\}
    \end{equation}
    By~\eqref{eq:parvoffko} and the implicit function theorem, $h_{\omega}$ is $C^1$. Thus $E_\omega$ is a $C^1$ graph over $B$. For convenience, write 
    \[
        \phi_\omega(z) := \t_0+h_\omega(z)v+z, \quad z\in B.
    \]
    So $F_{k,\omega}(\phi_\omega(z))=0$ and $E_\omega=\phi_\omega(B)$ (recall~\eqref{eq:formofeomega}). 

    For any unit vector $w\in v^\perp$, $\partial_w\phi_\omega(z)=(\partial_w h_\omega(z))v+w$. Differentiating $F_{k,\omega}(\phi_\omega(z))=0$ in $w$, it follows from the chain rule that 
    \begin{align*}
        0 &= \partial_w\bigl(F_{k,\omega}\circ\phi_\omega\bigr)(z) \\
        &= DF_{k,\omega}|_{\phi_\omega(z)}(\partial_w\phi_\omega(z)) \\
        &= DF_{k,\omega}|_{\phi_\omega(z)}((\partial_w h_\omega(z))v+w) \\
        &= (\partial_w h_\omega(z))DF_{k,\omega}|_{\phi_\omega(z)}(v)+DF_{k,\omega}|_{\phi_\omega(z)}(w) \\
        &= (\partial_w h_\omega(z))\partial_v F_{k,\omega}(\phi_\omega(z))+\partial_w F_{k,\omega}(\phi_\omega(z)).
    \end{align*}
    The uniform $C^1$-convergence above yields $\sup_{\omega\in\Omega,\t\in Q} |\nabla F_{k,\omega}(\t)|\lesssim 1$. Together with~\eqref{eq:parvoffko}, we see that 
    \[
        |\partial_wh_\omega(z)| = \frac{|\partial_wF_{k,\omega}(\phi_\omega(z))|}{|\partial_vF_{k,\omega}(\phi_\omega(z))|} \lesssim \frac{1}{c_0/2} \lesssim 1
    \]
    and therefore $|\nabla h_\omega|\lesssim 1$ uniformly in $\omega$ and $z$. In particular, the graphing functions $h_\omega$ have uniformly bounded Lipschitz constants. Moreover, the map $(\omega,z)\mapsto h_\omega(z)$ is continuous. The differentiation formula above then shows that $(\omega,z)\mapsto \nabla h_\omega(z)$ is continuous as well. 

    We verify the hypotheses in Lemma~\ref{lem:hypersurface-family}. Since $(\omega,z)\mapsto \phi_\omega(z)$ is continuous, the set 
    \[
        \bigcup_{\omega\in\Omega} E_\omega = \{\phi_\omega(z): (\omega,z)\in\Omega\times B\}
    \]
    is compact. 
    For every Borel $A\subset\R^d$, the area formula implies that 
    \[
        \h^{d-1}(E_\omega\cap A) = \int_B \mathbbm{1}_{A}(\phi_\omega(z)) \sqrt{1+|\nabla h_\omega(z)|^2} \rd\h^{d-1}(z).
    \]
    The integrand is jointly Borel in $(\omega,z)$ and hence the map $\omega\mapsto\h^{d-1}(E_\omega\cap A)$ is Borel. Since $|\nabla h_\omega|\lesssim 1$ uniformly in $\omega$ and $B$ is fixed, $\h^{d-1}(E_\omega)\approx \h^{d-1}(B)\approx 1$ uniformly in $\omega$. Also, the set $\{z\in B: \phi_\omega(z)\in B(\t,r)\}$ is contained in a $(d-1)$-dimensional ball of radius $r$ in $v^\perp$. So its $(d-1)$-dimensional Hausdorff measure is $\lesssim r^{d-1}$. Applying the area formula once more gives 
    \[
        \h^{d-1}(E_\omega\cap B(\t,r))\lesssim r^{d-1}
    \]
    uniformly in $\omega$, $\t$ and $r$.
    
    It remains to verify separation between these hypersurfaces. For $\omega,\eta\in\Omega$ and $\t\in Q$, 
    \[
        |\Pi^{\t}(\bj|_k\omega)-\Pi^{\t}(\bj|_k\eta)| = |\lambda_{\bj|_k}(\t)||\Pi^{\t}(\omega)-\Pi^{\t}(\eta)| \gtrsim_k \rho_*(\omega, \eta),
    \]
    where we used~\eqref{eq:thesscbd} and the fact that $\min_{\t\in Q} |\lambda_{\bj|_k}(\t)|>0$ for fixed $k$. Set $f_\omega(\t):=\Pi^{\t}(\bj|_k\omega)$ and $h(\t):=\Pi^{\t}(\bi)$, Lemma~\ref{lem:uniformc1} gives a uniform Lipschitz bound on $Q$ for $h$ and the family $\{f_\omega\}_{\omega}$. So by Lemma~\ref{lem:zeroseparation}, $\dist(E_\omega,E_\eta)\gtrsim_k \rho_*(\omega,\eta)$. 
    All the hypotheses of Lemma~\ref{lem:hypersurface-family} are therefore satisfied. Thus 
    \[
        \dimh \Big( \bigcup_{\omega\in\Omega} E_\omega \Big) \geq d-1+s_*.
    \]
    For every $\omega\in\Omega$ we have $\bj|_k\omega\in[\bj|_k]$ and thus $\bigcup_{\omega\in\Omega} E_\omega\subset U_{k}(\bi,\bj)\cap Q \subset U_{k}(\bi,\bj)\cap V$. Since $s_*> \dimh K^{\t_0}-\varepsilon/2$, the proof is completed.
\end{proof}

Now we are ready to prove Theorem~\ref{thm:denseinffibre}.

\begin{proof}[Proof of Theorem~\ref{thm:denseinffibre}]
    Fix any $\t_0\in\widetilde{U}$. Since $\Phi^{\t_0}$ fails the SSC, there are $\bi,\bj\in\Sigma$ such that $i_1\neq j_1$ and $\Pi^{\t_0}(\bi)=\Pi^{\t_0}(\bj)$. For $p\geq 1$, write $\bi|_p^\infty:=\bi|_p\bi|_p\cdots$ and define 
    \[
        F(\t) := \Pi^{\t}(\bi)-\Pi^{\t}(\bj) \quad\text{and}\quad F_p(\t) := \Pi^{\t}(\bi|_p^\infty)-\Pi^{\t}(\bj).
    \]
    By Lemma~\ref{lem:uniformc1}, $F_p\to F$ in $C^1$ on a compact neighbourhood of $\t_0$. Since $F(\t_0)=0$ and $i_1\neq j_1$, transversality gives $\nabla F(\t_0)\neq 0$. 
    Similarly as in the proof of Proposition~\ref{prop:dimensionstate}, for all large $p$ we can find $\t_p\to \t_0$ such that $\Pi^{\t_p}(\bi|_p^\infty)=\Pi^{\t_p}(\bj)$. This already proves density. Indeed, writing $x_p:=\Pi^{\t_p}(\bi|_p^\infty)$, we have seen in Example~\ref{exa:periodictype} that $x_p$ has infinitely many codings. Since $\t_p\to \t_0$, the set $T$ is dense in $\widetilde{U}$.

    To prove the local dimension estimate, let $V$ be a neighbourhood of $\t_0$ with $V\subset U$ and fix $\varepsilon>0$. Then for all large $p$, $\t_p\in V$ and by the lower semicontinuity from Lemma~\ref{lem:sscsubsystem}, 
    \[
        \dimh K^{\t_p}>\dimh K^{\t_0}-\varepsilon.
    \]
    Choose a neighbourhood $W$ of $\t_p$ with $\overline{W}\subset V$. Applying Proposition~\ref{prop:dimensionstate} to $\bi|_p^\infty$ and $\bj$, we obtain for all large $k$ that
    \[
        \dimh (U_k(\bi|_p^\infty,\bj)\cap W) \geq d-1+\dimh K^{\t_p} - \varepsilon.
    \]
    Every parameter $\t\in U_{k}(\bi|_p^\infty,\bj)$ belongs to $T$. It follows that 
    \[
        \dimh (T\cap V) \geq d-1+\dimh K^{\t_0}-2\varepsilon.
    \]
    Letting $\varepsilon \to 0$, we have $\dimh (T\cap V) \geq d-1+\dimh K^{\t_0}$. The bound~\eqref{eq:dimofdense2} follows immediately.
\end{proof}




\subsection{Baire-category sharpness of Theorem~\ref{thm:denseinffibre}}
\label{subsec:baireexa}

The construction is obtained by adding one more parameter to the Kamalutdinov--Tetenov family as follows.

Fix $0<\varepsilon<1/100$ and $p,r\in(0,1/50)$. For $(q,\tau)\in U:=(0,\varepsilon)\times(-\varepsilon,\varepsilon)$, consider the IFS $\Phi^{q,\tau}=\{\varphi^{q,\tau}_i\}_{i=1}^6$, where 
\begin{equation*}
    \begin{gathered}
        \varphi^{q,\tau}_1(x) = px, \, \varphi^{q,\tau}_2(x)=rx+\tfrac{1}{5}, \, \varphi^{q,\tau}_3(x) = -qx+\tfrac{8}{15}, \\
        \varphi^{q,\tau}_4(x) = rx+\tfrac{8}{15}-r+\tau, \, \varphi^{q,\tau}_5(x) = -rx+\tfrac{4}{5}, \, \varphi^{q,\tau}_6(x) = rx+1-r,
    \end{gathered}
\end{equation*}
The additional translation parameter $\tau$ appears only in the fourth map. Let $K^{q,\tau}$ and $\Pi^{q,\tau}$ be the attractor and coding map, respectively. Here the alphabet is $\Lambda=\{1,\ldots,6\}$ and the symbolic space is $\Sigma=\Lambda^{\mathbb{N}}$. For each finite word $\i\in\Lambda^*$, we write $K^{q,\tau}_{\i}:=\varphi^{q,\tau}_{\i}(K^{q,\tau})$.

Every map $\varphi_{i}^{q,\tau}$ sends the unit interval $[0,1]$ into $[0,1]$. So $K^{q,\tau}\subset[0,1]$. Moreover, $0,1\in K^{q,\tau}$. So $K^{q,\tau}$ is nontrivial and perfect. Our choices of $\varepsilon,p,r$ imply that 
\begin{equation}\label{eq:distofotherpiece}
    \dist(\varphi^{q,\tau}_i([0,1]), \varphi^{q,\tau}_j([0,1])) \geq \frac{1}{100}, \quad \forall (q,\tau)\in U
\end{equation}
whenever $i\neq j$ and $\{i,j\}\neq\{3,4\}$. In other words, the only possible intersection between distinct level-$1$ pieces is $K_3^{q,\tau}\cap K^{q,\tau}_4$.

As in Theorem~\ref{thm:denseinffibre}, let 
\[
    \widetilde{U} := \{(q,\tau)\in U: \Phi^{q,\tau} \text{ fails the SSC} \}
\]
and let
\[
    T := \{(q,\tau)\in U: \exists x\in K^{q,\tau} \text{ with infinitely many codings} \}
\]
Since the SSC is an open condition, $\widetilde{U}$ is relatively closed in $U$. Define the vertical sections 
\[
    \widetilde{U}_q:=\{\tau\in (-\varepsilon,\varepsilon): (q,\tau)\in\widetilde{U}\}, \quad q\in(0,\varepsilon).
\]

Fix $q$. Recall that the contraction ratios are independent of $\tau$ and have absolute value less than $1/50$. It follows that for every $\omega\in\Sigma$,
\begin{equation*}
    \bigl| \partial_\tau\Pi^{q,\tau}(\omega) \bigr| \leq \sum_{k=0}^\infty \max\{p,q,r\}^k \leq \sum_{k=0}^{\infty} 50^{-k} = \frac{50}{49}.
\end{equation*}
For $\omega,\eta\in\Sigma$, set
\[
    F_{\omega,\eta}^q(\tau) := \Pi^{q,\tau}(4\eta)-\Pi^{q,\tau}(3\omega) = \tau-r+r\Pi^{q,\tau}(\eta)+q\Pi^{q,\tau}(\omega).
\]
Then $F_{\omega,\eta}^q$ is affine and
\begin{equation}\label{eq:tau-transversality}
    \frac{\mathrm d}{\mathrm d\tau}F_{\omega,\eta}^q(\tau) \geq 1-\frac{50}{49}(q+r) > 1-\frac{50}{49}\cdot\frac{2}{50} > \frac{1}{2}.
\end{equation}

\begin{lemma}\label{lem:endpoint-2codes}
    Let $q\in(0,\varepsilon)$ and let $\tau_0\in\widetilde{U}_q$. If for some $\delta>0$ either $(\tau_0-\delta,\tau_0)\subset (-\varepsilon,\varepsilon)\setminus\widetilde{U}_q$ or $(\tau_0,\tau_0+\delta)\subset (-\varepsilon,\varepsilon)\setminus\widetilde{U}_q$, then every point of $K^{q,\tau_0}$ has at most two codings.
\end{lemma}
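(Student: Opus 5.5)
The plan is to exploit the monotone dependence on $\tau$ recorded in \eqref{eq:tau-transversality}. By \eqref{eq:distofotherpiece}, the only possible first level overlap is $K_3^{q,\tau}\cap K_4^{q,\tau}$. By Lemma~\ref{lem:Eisenough}(1), it suffices to show that every point of $\mathcal{C}^{q,\tau_0}$ has exactly two codings.

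\textbf{Step 1: a zero-locating function.} For fixed $q$ and $(\omega,\eta)\in\Sigma\times\Sigma$, the function $\tau\mapsto F^q_{\omega,\eta}(\tau)$ is strictly increasing with slope $>1/2$. Hence it has at most one zero $\tau(\omega,\eta)$. Wherever this zero lies in $(-\varepsilon,\varepsilon)$, Lemma~\ref{lem:uniformc1} shows that it depends continuously on $(\omega,\eta)$. Moreover,
\[
    \widetilde{U}_q=\{\tau(\omega,\eta)\}\cap(-\varepsilon,\varepsilon).
\]

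\textbf{Step 2: one-sided extremality.} I treat the case $(\tau_0-\delta,\tau_0)\cap\widetilde U_q=\varnothing$; the other case is symmetric, with maxima replaced by minima.
\begin{enumerate}
    \item Take $x\in\mathcal{C}^{q,\tau_0}$ with codings $3\omega$ and $4\eta$, so $\tau(\omega,\eta)=\tau_0$.
    \item By continuity, there is a cylinder neighbourhood $[\omega|_N]\times[\eta|_N]$ on which every zero satisfies $\tau(\omega',\eta')>\tau_0-\delta$. Hence $\tau(\omega',\eta')\geq\tau_0$ on this neighbourhood.
    \item By monotonicity in $\tau$, this gives $F^q_{\omega',\eta'}(\tau_0)\le 0=F^q_{\omega,\eta}(\tau_0)$.
    \item The $(\omega',\eta')$-dependence of $F^q$ is the separable expression $r\Pi^{q,\tau_0}(\eta')+q\Pi^{q,\tau_0}(\omega')$. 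So $\Pi(\eta)=\max K_{\eta|_n}$ and $\Pi(\omega)=\max K_{\omega|_n}$ for all $n\ge N$.
\end{enumerate}
Since $\varphi_{\eta|_n}$ is a similarity, $\max K_{\eta|_n}$ is the image of $0$ or $1$. Therefore $\Pi(\sigma^n\eta)\in\{0,1\}$ for $n\ge N$. By \eqref{eq:distofotherpiece}, $0$ and $1$ lie only in $K_1$ and $K_6$ respectively, so they have the unique codings $1^\infty$ and $6^\infty$. Hence both $\omega$ and $\eta$ are eventually constant, equal to $1^\infty$ or $6^\infty$. This holds for every pair of codings of every critical point.

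\textbf{Step 3: at most two codings, by descent.} Suppose some $c\in\mathcal{C}^{q,\tau_0}$ has at least three codings. Among all such $c$ and their codings, choose one minimizing the pre-period length $L$. This length is defined because of Step 2.
\begin{enumerate}
    \item Every coding of $c$ starts with $3$ or $4$. So two distinct codings share the same first symbol, say $3\omega$ and $3\omega'$.
    \item After removing the common prefix $\bm{w}=\omega\wedge\omega'$, the point $c'=\Pi(\sigma^{|\bm{w}|}\omega)$ lies in $\mathcal{C}$.
    \item The codings $\sigma^{|\bm w|}\omega$ and $\sigma^{|\bm w|}\omega'$ of $c'$ are strictly shorter tails of the eventually constant codings of $c$. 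So their pre-period lengths are strictly smaller than $L$.
    \item Apply Step 2 to $c'$ and iterate, tracking a coding that starts with $3$ or $4$. The pre-period length strictly decreases, so we eventually reach a critical point one of whose codings is $1^\infty$ or $6^\infty$.
    \item This is impossible, since these codings begin with $1$ or $6$ and $0,1\notin\mathcal C$.
\end{enumerate}

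\textbf{Main obstacle.} I expect the bookkeeping in Step 3 to be the main difficulty: the descent must be set up so that a genuine decrease occurs at every stage. If the direct descent proves awkward, the fallback is the following. Step 2 shows every critical point has the rigid form $\varphi_{3\bm u}(e)=\varphi_{4\bm v}(e')$ with $e,e'\in\{0,1\}$. Since there are only countably many such pairs $(\bm u,\bm v)$, I would then show directly that a point of this form cannot reappear in $\varphi_{\bm w}(\mathcal C)$ along one of its own codings. This would again give at most two codings.
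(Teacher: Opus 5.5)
Your Steps~1 and~2 are correct. Step~2(4) is essentially a symbolic form of the paper's key claim that $\varphi_3^{-1}(z)$ and $\varphi_4^{-1}(z)$ are right endpoints of $K^{q,\tau_0}$ for every critical point $z$. Below, $\Pi$ and $K_{\bm u}$ refer to the parameter $(q,\tau_0)$.

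\textbf{The gap is in Step~3: the descent never produces a contradiction.}

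\emph{Nothing to iterate.} The new critical point $c'$ is only known to have the two codings $\sigma^{|\bm w|}\omega$ and $\sigma^{|\bm w|}\omega'$. These begin with \emph{different} symbols ($3$ and $4$), so there is no common prefix left to strip. Since $c'$ may well have exactly two codings, the minimality of $L$ (taken over points with at least three codings) is not violated either.

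\emph{Item~(3) is unjustified.} If $L$ is realised by the coding of $c$ beginning with $4$, the tails of $3\omega$ and $3\omega'$ need not have pre-period below $L$.

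\emph{Eventual constancy is not enough.} It cannot by itself exclude a second coding. At $\tau=0$ one has $\varphi_3(0)=\varphi_4(1)=8/15$, and every $\varphi_{\bm w}(8/15)$ has the two eventually constant codings $\bm w31^\infty$ and $\bm w46^\infty$. Your fallback has the same defect: the ``rigid form'' alone does not say why such a point cannot reappear along its own coding.

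\textbf{The missing ingredient is the orientation information in Step~2(4),} which you discard after extracting eventual constancy. Here is how it closes the argument.

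\emph{Orientation at a critical point.} Let $c'=\Pi(3\alpha)=\Pi(4\beta)$ be critical. Step~2 gives $\Pi(\alpha)=\max K_{\alpha|_n}$ and $\Pi(\beta)=\max K_{\beta|_n}$ for large $n$. Since $\varphi_3$ reverses and $\varphi_4$ preserves orientation, $c'=\min\varphi_3(K_{\alpha|_n})=\max\varphi_4(K_{\beta|_n})$.

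\emph{The contradiction.} Suppose $c$ has codings $3\omega\neq3\omega'$ and $4\eta$.
\begin{enumerate}
\item Applying Step~2 to the pairs $(\omega,\eta)$ and $(\omega',\eta)$ shows that $x=\Pi(\omega)=\Pi(\omega')$ is the maximum of $K_{\omega|_n}$ and of $K_{\omega'|_n}$ for all large $n$.
\item Let $\bm w=\omega\wedge\omega'$. Then $c'=\varphi_{\bm w}^{-1}(x)$ is critical, and $\{\sigma^{|\bm w|}\omega,\sigma^{|\bm w|}\omega'\}=\{3\alpha,4\beta\}$.
\item Applying $\varphi_{\bm w}$ to the orientation identity shows that $x$ is the \emph{minimum} of $K_{\omega|_m}$ or of $K_{\omega'|_m}$ for all large $m$. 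Which one depends on the orientation of $\varphi_{\bm w}$.
\item That cylinder would then be a single point, which is impossible.
\end{enumerate}
The case $4\eta\neq4\eta'$ is identical.

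This is the paper's argument in symbolic form. There, right-endpointness of $x$ and $y$, together with perfectness of $K^{q,\tau_0}$, shows that every critical point, and hence every multiply coded point, is accumulated from both sides. Since $x$ and $y$ are not, they are uniquely coded, so each critical point has exactly two codings.
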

\begin{proof}
    Assume that $(\tau_0-\delta,\tau_0)\subset (-\varepsilon,\varepsilon)\setminus\widetilde{U}_q$ (the other case can be proved analogously). 
    Let $z\in K^{q,\tau_0}_3\cap K^{q,\tau_0}_4$, say $z=\varphi_3^{q,\tau_0}(x)=\varphi_4^{q,\tau_0}(y)$. Pick codings $\omega,\eta\in\Sigma$ of $x$ and $y$, respectively. We claim that $x$ is a right endpoint in $K^{q,\tau_0}$. Otherwise, there exists a sequence $x_n\in K^{q,\tau_0}$ with $x_n\downarrow x$. Let $\omega_n$ be a coding of $x_n$. Then
    \begin{align*}
        F_{\omega_n,\eta}^q(\tau_0) &= \Pi^{q,\tau_0}(4\eta) -\Pi^{q,\tau_0}(3\omega_n) \\
        &= \varphi_4^{q,\tau_0}(y) - \varphi_3^{q,\tau_0}(x_n) \\
        &=\varphi_3^{q,\tau_0}(x) - \varphi_3^{q,\tau_0}(x_n) \\
        &=q(x_n-x)>0.
    \end{align*}
    Since $F_{\omega_n,\eta}^q$ is affine and $\frac{\mathrm d}{\mathrm d\tau}F_{\omega_n,\eta}^q(\tau)>1/2$ (recall~\eqref{eq:tau-transversality}), for all large $n$ there exists a unique $\tau_n<\tau_0$ such that $F_{\omega_n,\eta}^q(\tau_n)=0$. Moreover, 
    \[
        0<\tau_0-\tau_n \leq 2q(x_n-x) \to 0.
    \]
    So $\tau_n\in(\tau_0-\delta,\tau_0)$ for all large $n$ while $\Pi^{q,\tau_n}(4\eta)=\Pi^{q,\tau_n}(3\omega_n)$, that is, $\tau_n\in\widetilde{U}_q$. This is a contradiction.
    
    Similarly, $y$ is a right endpoint of $K^{q,\tau_0}$. Since $K^{q,\tau_0}$ is perfect, $x$ and $y$ are both left accumulation points of $K^{q,\tau_0}$. The map $\varphi^{q,\tau_0}_3$ reverses orientation, whereas $\varphi^{q,\tau_0}_4$ preserves it. So $z$ is two-sided accumulated. Since $z$ was arbitrary, this holds for every point in the critical set $K^{q,\tau_0}_3\cap K^{q,\tau_0}_4$.

    Combining the claim with~\eqref{eq:branchingorbit}, we see that every point in $K^{q,\tau_0}$ with at least two codings is two-sided accumulated. This in turn implies that every $z\in K^{q,\tau_0}_3\cap K^{q,\tau_0}_4$ has exactly two codings, because $x$ and $y$ are uniquely coded (as one-sided endpoints). The desired conclusion then follows immediately from Lemma~\ref{lem:Eisenough}(1).
\end{proof}

\begin{lemma}\label{lem:uq-close-zerolength}
    For every $q\in(0,\varepsilon)$, the set $\widetilde{U}_q$ is closed in $(-\varepsilon,\varepsilon)$ and $\dimh\widetilde{U}_q < 1$.
\end{lemma}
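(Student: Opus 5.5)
We handle closedness and the dimension bound separately. For closedness, note that $\widetilde U$ is relatively closed in $U$ (the SSC is an open condition), so each vertical section $\widetilde U_q=\{\tau:(q,\tau)\in\widetilde U\}$ is closed in $(-\varepsilon,\varepsilon)$. More directly: by \eqref{eq:distofotherpiece}, $\tau\in\widetilde U_q$ if and only if $K_3^{q,\tau}\cap K_4^{q,\tau}\neq\varnothing$. That is, $\tau\in\widetilde U_q$ if and only if there exist $\omega,\eta\in\Sigma$ with $F^q_{\omega,\eta}(\tau)=0$. By Lemma~\ref{lem:uniformc1}, $(\omega,\eta,\tau)\mapsto F^q_{\omega,\eta}(\tau)$ is continuous, and $\Sigma\times\Sigma$ is compact. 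Hence the projection of the zero set to the $\tau$-coordinate is closed.

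For the dimension bound, we parametrize $\widetilde U_q$ by a compact symbolic space. By \eqref{eq:tau-transversality}, each affine map $\tau\mapsto F^q_{\omega,\eta}(\tau)$ has slope $>1/2$, so it has at most one zero $\tau(\omega,\eta)$. Thus
\[
\widetilde U_q\subset\{\tau(\omega,\eta):(\omega,\eta)\in\Sigma\times\Sigma\}.
\]
We aim to show that $(\omega,\eta)\mapsto\tau(\omega,\eta)$ is Lipschitz from $\Sigma\times\Sigma$, equipped with the product of the ultrametrics $\rho$ for $\Phi^{q,\tau}$, into $\R$. Suppose $\omega,\omega'$ agree in $n$ symbols and $\eta,\eta'$ agree in $m$ symbols. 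Then $|F^q_{\omega,\eta}-F^q_{\omega',\eta'}|$ is bounded uniformly in $\tau$ by a constant times $q\cdot c^{n}+r\cdot c^{m}$, with $c=\max\{p,q,r\}$. Comparing with the slope bound $1/2$, we get $|\tau(\omega,\eta)-\tau(\omega',\eta')|\lesssim \max\{r_{\omega\wedge\omega'},r_{\eta\wedge\eta'}\}$.

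The one subtlety is that the contraction ratios do not depend on $\tau$, so the metric on $\Sigma$ is fixed once $q$ is fixed. Therefore $\Pi^{q,\tau}(\omega)-\Pi^{q,\tau}(\omega')$ is a difference of two cylinder points inside a cylinder of length $r_{\omega\wedge\omega'}$. Together with the $\tau$-derivative bound $50/49$, this gives the estimate uniformly in $\tau$. Since Lipschitz maps do not increase Hausdorff dimension, we obtain
\[
\dimh\widetilde U_q\le \dimh(\Sigma\times\Sigma)=2s_q,
\]
where $s_q$ is the similarity dimension of $\Phi^{q,\tau}$, which is independent of $\tau$. This is the same mechanism as in Lemma~\ref{lem:bsstrans}.

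It remains to check $2s_q<1$. All six contraction ratios are below $1/50$, so $\sum_i r_i^{1/2}<6\cdot 50^{-1/2}<1$, which gives $s_q<1/2$. The main care is needed in the Lipschitz estimate. The product space dimension must be computed with the correct metric: taking the max metric, the product of two Ahlfors $s_q$-regular spaces has dimension $2s_q$. Also, the zero $\tau(\omega,\eta)$ may exit $(-\varepsilon,\varepsilon)$. We handle this by restricting to the set of pairs where the zero lies in that interval, which is a closed subset, and then the Lipschitz bound applies there.
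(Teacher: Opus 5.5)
Your proposal is correct and is essentially the paper's argument in different packaging: the Lipschitz bound for $(\omega,\eta)\mapsto\tau(\omega,\eta)$ obtained from the slope estimate \eqref{eq:tau-transversality} is exactly the paper's statement that each $E_{\i,\j}$ lies in an interval of length comparable to the cylinder sizes, and covering $\Sigma\times\Sigma$ by product cylinders is its count of $36^n$ such sets (using the actual ratios gives you the slightly sharper exponent $2s_q<\log 36/\log 50$). There are two harmless slips: the preliminary bound $qc^n+rc^m$ alone would not give Lipschitz continuity with respect to $\rho$, but the cylinder-diameter bound $q\,r_{\omega\wedge\omega'}+r\,r_{\eta\wedge\eta'}$ you state next does; and the set of pairs whose zero lies in $(-\varepsilon,\varepsilon)$ is open rather than closed, which is irrelevant because a Lipschitz map on any subset of $\Sigma\times\Sigma$ cannot raise Hausdorff dimension.
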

\begin{proof}
    Relative closedness follows from that of $\widetilde U$ in $U$. 
    Fix $q$. For $n\geq 1$ and $\i,\j\in\Lambda^n$, define 
    \[
        E_{\i,\j} := \{\tau\in(-\varepsilon,\varepsilon): K^{q,\tau}_{3\i}\cap K^{q,\tau}_{4\j}\neq\varnothing\}.
    \]
    If $\tau\in E_{\i,\j}$, then from $K^{q,\tau}\subset[0,1]$ and the contraction bound we see that  
    \[
        |\varphi^{q,\tau}_{4\j}(0)-\varphi^{q,\tau}_{3\i}(0)| \leq \diam(K^{q,\tau}_{3\i})+\diam(K^{q,\tau}_{4\j}) \leq (q+r)50^{-n}.
    \]
    By the transversality condition~\eqref{eq:tau-transversality}, it is standard to show that $E_{\i,\j}$ is contained in an interval of length at most $4(q+r)50^{-n}$. Note that $\widetilde{U}_q\subset\bigcup_{\i,\j\in\Lambda^n}E_{\i,\j}$. There are at most $36^n$ such sets, so for every $s>\frac{\log 36}{\log 50}$, the sum 
    \[
        \sum_{\i,\j\in\Lambda^n} (\diam E_{\i,\j})^s \lesssim 36^n\cdot (q+r)^s50^{-ns} \lesssim (36\cdot 50^{-s})^n \to 0.
    \]
    Therefore, $\dimh\widetilde{U}_q\leq\frac{\log 36}{\log 50}<1$.
\end{proof}

For convenience, write 
\[
    T_2:=\{(q,\tau)\in\widetilde{U}: \text{every point of $K^{q,\tau}$ has $\leq 2$ codings}\}.
\]
By Lemma~\ref{lem:uq-close-zerolength}, each $\widetilde{U}_q$ is closed and nowhere dense in $(-\varepsilon,\varepsilon)$. So the endpoints of the connected components of $(-\varepsilon,\varepsilon)\setminus\widetilde{U}_q$ that lie in $(-\varepsilon,\varepsilon)$ are dense in $\widetilde{U}_q$. Lemma~\ref{lem:endpoint-2codes} shows that every such endpoint $\tau_*$ satisfies $(q,\tau_*)\in T_2$. Approximating within each vertical section, we conclude that $T_2$ is dense in $\widetilde{U}$.

For pairwise distinct words $u,v,w\in\Lambda^n$, define 
\[
    B_{u,v,w} := \{(q,\tau)\in\widetilde{U}: K^{q,\tau}_u\cap K^{q,\tau}_v\cap K^{q,\tau}_w\neq\varnothing\}.
\]
By continuity of the cylinder sets in the Hausdorff metric, each $B_{u,v,w}$ is relatively closed in $\widetilde{U}$. Moreover, it is disjoint from the dense set $T_2$. So it is nowhere dense in $\widetilde{U}$. Then 
\[
    T \subset \widetilde{U}\setminus T_2\subset \bigcup_{n=1}^\infty\bigcup_{u,v,w\in\Lambda^n \text{ pairwise distinct}} B_{u,v,w}
\]
is of the first category in $\widetilde{U}$.

\subsection{Dimension of the critical set}

Finally, we address the upper bound for the dimension of the overlap set. The proof is based on the standard combination of a transversality estimate and a stopping-time covering argument.

\begin{proof}[Proof of Theorem~\ref{thm:transfam_drop}]
    Let $V\subset U$ be open with $\overline{V}\subset U$. Denote by $D_V$ the supremum of diameters of $K^{\t}$, $\t\in V$. For $i\in\Lambda$, set $b_i:=\sup_{\t\in V}|r_i(\t)|$, and for $\i=i_1\cdots i_n$ define $b_{\i}:=b_{i_1}\cdots b_{i_n}$. Let $s_V$ be the unique solution of $\sum_{i\in\Lambda} b_i^{s_V}=1$. For $0<\delta<1$, set 
    \[
        \mathcal{W}_\delta := \{u\in\Lambda^*: b_{u}\leq \delta<b_{u^-}\}.
    \]
    Note that $b_{u}\approx_V\delta$ for $u\in\mathcal{W}_\delta$ and $\sum_{u\in\mathcal{W}_\delta} b_u^{s_V}=1$. So 
    \begin{equation}\label{eq:tranddp_1}
        \#\mathcal{W}_\delta \lesssim_V \delta^{-s_V}.
    \end{equation}

    Let $i,j\in\Lambda$ be distinct. For each pair $u,v\in\mathcal{W}_\delta$, set 
    \[
        E_{u,v} := \{\t\in V: \varphi^{\t}_{iu}(K^{\t})\cap\varphi^{\t}_{jv}(K^{\t})\neq\varnothing\}.
    \]
    If $\t\in E_{u,v}$, then the two cylinder images intersect, so 
    \[
        |\Pi^{\t}(iu\xi)-\Pi^{\t}(jv\xi)| \leq 2D_V\delta, \quad \forall\xi\in\Sigma.
    \]
    Since $i\neq j$, we have by the transversality lemma quoted above that 
    \begin{equation}\label{eq:tranddp_2}
        \mathcal{L}^d(E_{u,v}) \leq C_V\cdot 2D_V\delta \lesssim_V \delta.
    \end{equation}

    For each $\t\in V$, the overlap set $Q_{i,j}(\t) := \varphi^{\t}_i(K^{\t})\cap \varphi^{\t}_{j}(K^{\t})$ is covered by the family
    \[
        \big\{ \varphi^{\t}_{iu}(K^{\t})\cap\varphi^{\t}_{jv}(K^{\t}): u,v\in\mathcal{W}_\delta, \t\in E_{u,v} \big\}
    \]
    and each member of the cover has diameter at most $D_V\delta$. Hence 
    \[
        N_{D_V\delta}(Q_{i,j}(\t)) \leq \sum_{u,v\in\mathcal{W}_\delta} \mathbbm{1}_{E_{u,v}}(\t).
    \]
    Integrating over $V$ and using~\eqref{eq:tranddp_1}--\eqref{eq:tranddp_2}, we obtain the averaged covering estimate 
    \begin{equation}\label{eq:tranddp_3}
        \int_V N_{D_V\delta}(Q_{i,j}(\t)) \rd\t \leq \int_V \sum_{u,v\in\mathcal{W}_\delta} \mathbbm{1}_{E_{u,v}}(\t) \rd\t = \sum_{u,v\in\mathcal{W}_\delta}\mathcal{L}^d(E_{u,v}) \lesssim_V \delta^{1-2s_V}.
    \end{equation}
    
    Let $\alpha:=\max\{2s_V-1,0\}$. For every $\varepsilon>0$, Markov's inequality and~\eqref{eq:tranddp_3} give (take $\delta=2^{-n}$)
    \[
        \mathcal{L}^d(\{\t\in V: N_{D_V/2^n}(Q_{i,j}(\t))>2^{n(\alpha+\varepsilon)}\}) \lesssim_V 2^{-n(1-2s_V+\alpha+\varepsilon)}.
    \]
    The right hand side is summable in $n$: the exponent in the parentheses equals $\varepsilon$ when $s_V\geq 1/2$, and it is $1-2s_V+\varepsilon>0$ when $s_V<1/2$. By Borel-Cantelli, for almost every $\t\in V$ and all large $n$, 
    \[
        N_{D_V/2^n}(Q_{i,j}(\t)) \leq 2^{n(\alpha+\varepsilon)}.
    \]
    Letting $n\to\infty$, we have $\dimub Q_{i,j}(\t)\leq\alpha+\varepsilon$. Since $\varepsilon$ was arbitrary, 
    \[
        \dimub Q_{i,j}(\t) \leq \max\{2s_V-1, 0\}
    \]
    for almost every $\t\in V$.

    To pass from $s_V$ to the pointwise value $s(\t)$, choose a countable basis $\mathcal G$ of relatively compact open subsets of $U$ and intersect the full-measure sets obtained above for all $V\in\mathcal G$ and all pairs $i\neq j$. For every $\t$ in the resulting full-measure set, choose $V_k\in\mathcal{G}$ shrinking to $\t$ with $\t\in V_k$. Then $s_{V_k}\to s(\t)$. It follows that
    \[
        \dimub Q_{i,j}(\t) \leq \max\{0,2s(\t)-1\} \quad\text{ for a.e. } \t\in U.
    \]
    Summing over the finitely many pairs $i\neq j$ completes the proof.
\end{proof}

\begin{example}
    Consider the $\{0,1,3\}$-family $\Phi^\lambda = \{\lambda x, \lambda x+1, \lambda x+3\}$, where $\lambda\in (\frac{1}{4},\frac{1}{3})=:J$, and let $K^\lambda$ denote the attractor. It satisfies the transversality condition on $J$; see~\cite[Chapter 6]{BSS23}. Moreover, the ESC holds for Lebesgue almost every $\lambda\in J$, whereas the OSC fails for every $\lambda\in J$. For each such parameter, the attractor and the associated natural self-similar measure have the expected dimension $s(\lambda)=\frac{\log 3}{-\log\lambda}$. By Theorem~\ref{thm:transfam_drop}, 
    \[
        \dimh \mathcal{C}^\lambda \leq \dimub\mathcal{C}^\lambda \leq 2\frac{\log 3}{-\log\lambda} - 1 < \frac{\log 3}{-\log\lambda} = s(\lambda)
    \]
    for almost all $\lambda\in J$. It follows from~\eqref{eq:branchingorbit} that the natural self-similar measure gives full mass to the set of uniquely coded points.
\end{example}

\section{Open questions}

We conclude with three open questions. Let $K\subset\R$ be the self-similar set generated by an IFS $\Phi=\{\varphi_i\}_{i\in\Lambda}$ with contraction ratios $(r_i)_{i\in\Lambda}$ and similarity dimension $s$. Let $\mu$ be the natural Bernoulli measure on the symbolic space with weights $(r_i^s)_{i\in\Lambda}$ and let $\nu=\Pi\mu$.

\begin{question}
    If every point of $K$ has finitely many codings, must the
    numbers of codings be uniformly bounded?
\end{question}

Theorem~\ref{thm:fipimplybd} answers the question affirmatively under the FIP. So it remains to consider the case when $\mathcal{C}$ is infinite. A direct compactness argument does not suffice. Indeed, suppose that $x_n\in K$ and $\#\Pi^{-1}(x_n)\to\infty$. Passing to a subsequence, we may assume that $x_n\to x$. By continuity of $\Pi$, every convergent sequence of codings of the points $x_n$ has its limit in $\Pi^{-1}(x)$. However, distinct sequences of codings may converge to the same address if they share initial blocks of arbitrarily large length. 

\begin{question}\label{que:unique-coding}
    If $\dimh\nu=s$, does $\nu$-almost every point of $K$ have a unique coding?
\end{question}

Equivalently, is there a Borel set $\widetilde{B}\subset\Sigma$ with $\mu(\widetilde B)=0$ such that $\Pi^{-1}(\Pi(\bi))=\{\bi\}$ for every $\bi\in\Sigma\setminus\widetilde{B}$? 
As explained after Proposition~\ref{prop:fibrebound}, the no-dimension-drop condition $\dimh\nu=s$ is equivalent to the existence of a Borel set $B\subset\Sigma$ with $\mu(B)=0$ such that $\Pi$ is injective on $\Sigma\setminus B$. This gives uniqueness after removing an exceptional set of codings. But Question~\ref{que:unique-coding} concerns the full fibres, including any additional codings in that exceptional set. By the ergodicity of $\mu$, if the answer is negative then we have no dimension drop yet $\nu$-almost every point has infinitely many codings. The analogous implication for arbitrary positive Bernoulli weights $(p_i)_{i\in\Lambda}$ is false when $s$ is replaced by the similarity dimension of the measure. An example due to De-Jun Feng appears in~\cite[Example 11.5.5]{BSS23}.



\begin{question}\label{que:013-codings}
    For $\lambda\in(1/4,1/3)$, let $K^\lambda$ be the attractor of the $\{0,1,3\}$-family
    \[
        \Phi^\lambda=\{\lambda x,\lambda x+1,\lambda x+3\}.
    \]
    Does the set of parameters $\lambda$ for which $K^\lambda$ contains a point with infinitely many codings have positive Lebesgue measure?
\end{question}

It is known that the OSC fails for every $\lambda\in(1/4, 1/3)$; see~\cite[Proposition 7.7.4]{BSS23}. By Theorem~\ref{thm:denseinffibre}, the parameter set in Question~\ref{que:013-codings} is dense in $(1/4,1/3)$ and has Hausdorff dimension one (since $\sup_{\lambda\in(1/4,1/3)}\dimh K^\lambda=1$). These conclusions do not determine its Lebesgue measure.







\bigskip
\noindent{\bf Acknowledgements.} 
The authors are grateful to De-Jun Feng for his helpful comments.  
KS is supported by the National Research, Development and Innovation Oﬃce--NKFIH, Hungary, Projects K152858 and K144059. JX is partially supported by NSFC grant No. 12501113 and the Fundamental Research Funds for the Central Universities No. NS2025025. The authors are also both supported by the Foreign Expert Program No. H20250925. 

\medskip
\noindent{\bf Declaration on AI assistance.}
Generative AI tools were used during the preparation of this manuscript to assist with literature searches, proof checking, and to improve the clarity and organization of the exposition. All mathematical results, arguments, and proofs were developed and verified by the authors.

\small
\bibliographystyle{amsplain}

\end{document}